\title[Mirror Relations from Gromov-Witten relations of root stacks] {Intrinsic mirror symmetry and Frobenius structure theorem via Gromov-Witten theory of root stacks}
\author{Samuel Johnston}
\address{Samuel Johnston, Imperial College London, South Kensington Campus, London SW7 2AZ, UK}
\email{samuel.johnston@ic.ac.uk}
\date{}		
\subjclass[2010]{14J33,14N3F,14N10}
\newcounter{thm}
\newtheorem{remark}[thm]{Remark}
\newtheorem{proposition}[thm]{Proposition}
\newtheorem{definition}[thm]{Definition}
\newtheorem{theorem}[thm]{Theorem}
\newtheorem{example}[thm]{Example}
\newtheorem{lemma}[thm]{Lemma}
\newtheorem{corollary}[thm]{Corollary}
\newtheorem{assumption}[thm]{Assumption}
\newcommand{\ZZ} {\mathbb{Z}}
\newcommand{\NN}{\mathbb{N}}
\newcommand{\kk}{\mathds{k}}
\numberwithin{equation}{section}
\numberwithin{thm}{section}
\numberwithin{figure}{section}
\begin{document}

\begin{abstract}
Using recent results of Battistella, Nabijou, Ranganathan and the author, we compare candidate mirror algebras associated with certain log Calabi-Yau pairs constructed by Gross-Siebert using log Gromov-Witten theory and Tseng-You using orbifold Gromov-Witten theory of root stacks. Although the structure constants used to defined these mirror algebras do not typically agree, we show that any given structure constant involved in the construction the algebra of Gross and Siebert can be computed in terms of structure constants of the algebra of Tseng and You after a sequence of log blowups. Using this relation, we provide another proof of associativity of the log mirror algebra, and a proof of the weak Frobenius Structure Theorem in full generality. Along the way, we introduce a class of twisted punctured Gromov-Witten invariants of generalized root stacks induced by log \'etale modifications, and use this to study the behavior of log Gromov-Witten invariants under ramified base change.
\end{abstract}
\maketitle

\section{Introduction}

The past few years have seen much progress in constructing mirrors to log Calabi-Yau varieties directly using algebro-geometric enumerative geometry. Work of Gross and Siebert in \cite{int_mirror} has shown that a particular class of log Gromov-Witten invariants satisfy relations reminicent of the classical WDVV relations in the theory of stable maps, which in turn encode an algebra associated with the log Calabi Yau pair ($X,D$). However, the arguments used to prove the required relations in log Gromov-Witten theory are subtler than one might naively expect, as the required relations do not follow from a straightforward application of WDVV as in the setting of standard Gromov-Witten theory. Seperately, Tseng and You in \cite{RQC_no_log} defined a similar ring using ``large r" orbifold Gromov-Witten invariants. As the orbifold theory does have access to most of the standard techniques from Gromov-Witten theory, the proofs that their algebras satisfy the necessary relations is more straightforward. 

As both the logarithmic and the orbifold Gromov-Witten theories are versions of Gromov-Witten theory relative to a simple normal crossings divisor, it is natural to ask what the relationship is between the theories, and in our specific context understand the relationship between the two mirror algebras constructed using the different relative curve counting theories. Recent work of Battistella, Nabijou and Ranganathan in \cite{logroot} and \cite{logroot2} addresses the first of these questions in genus $0$. Roughly speaking, they identify the difference between the theories to be a result of the fact that log Gromov-Witten theory is invariant under log \'etale modifications, by results in \cite{bir_GW}, a relation which does not hold in the large $r$ orbifold theory. Moreover, for any given log invariant of interest associated with a log smooth target $X$, there exists a log \'etale modification $\pi \colon \tilde{X} \rightarrow X$ such that the log invariant on $X$ is equal to an orbifold invariant on $\tilde{X}$. Separately, work of the author in \cite{pbirinv} has extended the main result of \cite{bir_GW} to the negative contact setting, and explicitly described how the log Gromov-Witten theory of a target changes under log \'etale modifications, both at the level of virtual cycles and moduli stacks. This result in particular demonstrated a birational invariance result for the intrinsic mirror algebra constructed in \cite{int_mirror}. 

Using these results, we initiate a study into the second of the questions above, in the process observing relations in log Gromov-Witten theory analogous to standard relations in ordinary Gromov-Witten theory.  More precisely, after first providing simple examples demonstrating the structure constants defining these algebras differ in general, the following theorem makes clear how these classes of invariants are related nonetheless:

\begin{theorem}\label{mthm1}
For $(X,D)$ a log Calabi-Yau variety in the sense of \cite{int_mirror}, with log and orbifold mirror algebras $R^{log}_{(X,D)}$ and $R^{orb}_{(X,D)}$ respectively, then any structure constants $N_{p,q,r}^\textbf{A}$ defining the product on $R^{log}_{(X,D)}$ in the theta basis is equal to a sum of orbifold structure constants on a log \'etale modification. 
\end{theorem}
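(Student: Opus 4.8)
The plan is to establish Theorem \ref{mthm1} in three moves. First, I will use birational invariance of log Gromov--Witten theory to reduce to a log \'etale modification $\pi\colon \tilde X\to X$ on which every contact condition appearing in $N_{p,q,r}^{\mathbf{A}}$ is an honest tangency along a single component of the boundary. Second, on that modification I will translate the resulting (punctured) log invariants into orbifold Gromov--Witten invariants of a root stack, handling the negative/puncture contact by means of the twisted punctured invariants of generalized root stacks introduced in this paper. Third, I will recognize the orbifold invariants so produced as the structure constants defining $R^{orb}_{(\tilde X,\tilde D)}$ in the theta basis.

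Recall that $N_{p,q,r}^{\mathbf{A}}$ is the genus $0$, three-pointed punctured log Gromov--Witten invariant of $(X,D)$ with prescribed contact order $p$ at $x_1$, contact order $q$ at $x_2$, and a puncture of contact order $-r$ at $x_3$ constrained to the closed stratum dual to $r$, in curve class $\mathbf{A}$, with the incidence condition cutting the virtual dimension to zero. The integral points $p,q,r$ need not lie on rays of the cone complex $\Sigma(X)$. Using the combinatorial input of \cite{logroot} and \cite{logroot2}, together with its negative-contact refinement in \cite{pbirinv}, I will choose a composite of log blowups $\pi\colon \tilde X\to X$ --- depending on the tropical data of $p$, $q$, $r$, $\mathbf{A}$ --- refining $\Sigma(X)$ so that each of $p$, $q$, $r$ lies on a ray of $\Sigma(\tilde X)$, hence corresponds to an irreducible component of the reduced boundary $\tilde D=\pi^{-1}(D)_{\mathrm{red}}$, and so that the punctured maps contributing to $N_{p,q,r}^{\mathbf{A}}$ pull back to maps whose marked points lie on distinct boundary components of $\tilde X$; since log blowups are crepant for the pair, $(\tilde X,\tilde D)$ is again log Calabi--Yau and $R^{orb}_{(\tilde X,\tilde D)}$ is defined. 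By the birational invariance of punctured log Gromov--Witten invariants under log \'etale modifications (\cite{bir_GW}, and in the negative-contact setting \cite{pbirinv}),
\[
N_{p,q,r}^{\mathbf{A}} \;=\; \sum_{\pi_*\tilde{\mathbf{A}}=\mathbf{A}} \tilde N_{p,q,r}^{\tilde{\mathbf{A}}},
\]
the sum ranging over the finitely many classes $\tilde{\mathbf{A}}\in H_2(\tilde X)$ supporting a genus $0$ stable punctured map with these incidence conditions, and $\tilde N_{p,q,r}^{\tilde{\mathbf{A}}}$ denoting the analogous invariant of $(\tilde X,\tilde D)$.

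It remains to express each $\tilde N_{p,q,r}^{\tilde{\mathbf{A}}}$ in terms of orbifold invariants of a root stack $\tilde X_{\mathbf{r}/\tilde D}$. For $\mathbf{r}$ sufficiently divisible (depending on $\tilde{\mathbf{A}}$ and the contact orders) the positive tangencies at $x_1$ and $x_2$ along components of $\tilde D$ are converted into the corresponding twisted-sector insertions by the comparison between log and orbifold invariants in the rank one case, in the form used in \cite{RQC_no_log}, and the stratum constraint at $x_3$ becomes the orbifold insertion Poincar\'e dual to the theta class $\vartheta_r$ on $\tilde X_{\mathbf{r}/\tilde D}$. The one input with no direct orbifold counterpart is the puncture of negative contact order $-r$ at $x_3$: here I invoke the twisted punctured Gromov--Witten invariants of generalized root stacks developed in this paper, together with the analysis of log invariants under ramified base change, to rewrite $\tilde N_{p,q,r}^{\tilde{\mathbf{A}}}$ as a finite $\mathbb{Z}$-linear combination of honest orbifold invariants of $\tilde X_{\mathbf{r}/\tilde D}$ with non-negative contact, the coefficients being combinatorial multiplicities from the boundary of the relevant generalized root stack. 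Each orbifold invariant so obtained is, by definition \cite{RQC_no_log}, one of the structure constants of $R^{orb}_{(\tilde X,\tilde D)}$ in the theta basis; assembling the two displayed sums gives the claim. The main obstacle is precisely this last step: orbifold Gromov--Witten theory does not see negative contact orders, so the passage from a punctured log invariant to a combination of genuine orbifold invariants --- controlling both the combinatorial multiplicities and the compatibility of the dimension/incidence normalization on the two sides --- is where the real work lies, and is the reason the statement yields a sum rather than a single orbifold structure constant.
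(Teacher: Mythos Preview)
Your first two moves are on the right track and match the paper: pass to a log \'etale modification $\pi\colon \tilde X\to X$ and invoke birational invariance of punctured log invariants to write
\[
N_{p,q,r}^{\mathbf{A}}=\sum_{\pi_*\tilde{\mathbf{A}}=\mathbf{A}}\tilde N_{p,q,r}^{\tilde{\mathbf{A}}}.
\]
The sum in the statement of the theorem comes entirely from this step --- summing over lifted curve classes --- and not from anything having to do with negative contact.

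The third move, however, contains a genuine gap and a misconception. You assert that ``orbifold Gromov--Witten theory does not see negative contact orders'' and propose to rewrite the puncture at $x_3$ as a $\mathbb{Z}$-linear combination of orbifold invariants with \emph{non-negative} contact. This is not how the comparison works. In the large-$r$ orbifold theory of Tseng--You, a negative contact order $-r$ is interpreted directly as a \emph{high-age} twisted-sector insertion; the orbifold structure constant is by definition $N_{p,q,r}^{\tilde{\mathbf{A}},orb}=\langle [1]_p,[1]_q,[pt]_{-r}\rangle_{\tilde{\mathbf{A}}}$. The comparison theorem of Battistella--Nabijou--Ranganathan in the negative-contact setting (Theorem~\ref{neglogorb} here, from \cite{logroot2}) gives a \emph{direct equality} between the refined punctured log invariant and this orbifold invariant, not a combinatorial expansion. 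There is no need to eliminate negative contact, and the twisted punctured invariants of generalized root stacks introduced in this paper serve a different purpose: they mediate between the log point-constraint $\mathscr{M}(X,\beta,z)$ and an ordinary evaluation-class insertion on a root stack $X_r$ (Proposition~\ref{altstconst}, Corollary~\ref{pointcon}), not between negative and non-negative contact.

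What you are missing entirely is the condition that actually governs when the log/orbifold comparison holds: \emph{slope-sensitivity}. The paper's argument is to choose $\pi$ so that (i) every lift of the type $\beta$ to $\tilde X$ is slope-sensitive, and (ii) $r$ lies on a ray of $\Sigma(\tilde X)$ (there is no need to put $p,q$ on rays). Under (i), Theorem~\ref{neglogorb} applies; under (ii), the log point-constraint defining $\tilde N_{p,q,r}^{\tilde{\mathbf{A}}}$ matches the orbifold evaluation insertion (this is the content of Lemma~\ref{rpointconst} through Corollary~\ref{logorb}). One then gets the single identity $\tilde N_{p,q,r}^{\tilde{\mathbf{A}}}=N_{p,q,r}^{\tilde{\mathbf{A}},orb}$ on $\tilde X$ for each lifted class, and substituting into the birational-invariance sum finishes the proof.
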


Using this fact, we give a new proof of the associativity of the mirror algebra introduced by Gross and Siebert in \cite{int_mirror}, and a proof of the weak Frobenius structure theorem in full generality i.e. Conjecture $9.2$ of \cite{int_mirror}, which relates the natural trace form on $R_{(X,D)}$ with certain decendent log Gromov-Witten invariants. We recall both theorems below:

\begin{corollary}\label{mcr1}
Given a log Calabi-Yau pair $(X,D)$, then the structure constants $N_{p,q,r}^\textbf{A}$ defined in \cite{int_mirror} satisfy the following relation:

\[ \sum_{\textbf{A} = \textbf{A}_1 + \textbf{A}_2\text{, }s \in B(\mathbb{Z})} N_{p_1,s,r}^{\textbf{A}_1}N_{p_2,p_3,s}^{\textbf{A}_2} = \sum_{\textbf{A} = \textbf{A}_1 + \textbf{A}_2\text{, }s \in B(\mathbb{Z})} N_{p_1,p_2,s}^{\textbf{A}_1}N_{p_3,s,r}^{\textbf{A}_2}.\]
Hence, the structure constants define an associative product on $R_{(X,D)}$. 

\end{corollary}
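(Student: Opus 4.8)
The strategy is to reduce the desired WDVV-type relation for the log structure constants $N_{p,q,r}^{\mathbf{A}}$ to the corresponding relation among orbifold structure constants, where it is known (or follows from standard Gromov-Witten WDVV as in Tseng-You).

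First I would fix the data: the classes $p_1,p_2,p_3,r \in B(\mathbb{Z})$ and a curve class $\mathbf{A}$, together with all the finitely many splittings $\mathbf{A} = \mathbf{A}_1 + \mathbf{A}_2$ and intermediate $s \in B(\mathbb{Z})$ that contribute nonzero terms to either side of the identity. Since only finitely many such contributions are nonzero (by the usual finiteness/positivity arguments for the invariants $N$, as in \cite{int_mirror}), I can invoke Theorem \ref{mthm1} to produce, for each of these finitely many structure constants $N_{\bullet,\bullet,\bullet}^{\bullet}$, a log étale modification expressing it as a sum of orbifold structure constants on a suitable root stack. The key point is then to pass to a \emph{common} log étale modification $\pi\colon \tilde X \to X$ that simultaneously computes all of these finitely many structure constants as orbifold invariants; this uses the fact that log étale modifications can be dominated (any two are dominated by a common third), together with the birational invariance of the log invariants under further blowups (which is exactly the content being exploited from \cite{pbirinv} and \cite{bir_GW}).

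Next, on $\tilde X$ — equivalently on the associated large-$r$ root stack — both sides of the claimed identity become equalities among orbifold Gromov-Witten structure constants. At this level the relation is a genuine consequence of the WDVV equations: the orbifold theory has a well-behaved splitting axiom and boundary pushforward, so one runs the standard argument of pulling back a point-relation on $\overline{M}_{0,4}$ and comparing the two boundary strata, exactly as Tseng-You do in \cite{RQC_no_log} when establishing associativity of $R^{orb}$. The bookkeeping here is to match the decomposition of the boundary of the relevant moduli space of orbifold stable maps with the sum over $\mathbf{A}_1 + \mathbf{A}_2$ and over intermediate theta classes $s$; the gluing at an orbifold node over the divisor introduces the sum over the twisted sectors, which is precisely what reproduces the sum over $s \in B(\mathbb{Z})$ after the identification of orbifold and log theta classes under $\pi$.

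The main obstacle I expect is \emph{bookkeeping the translation of the index sets under $\pi$}: a single log splitting datum $(\mathbf{A}_1+\mathbf{A}_2, s)$ on $X$ corresponds to a \emph{sum} of orbifold splitting data on $\tilde X$ (different liftings of the curve classes and different twisted sectors mapping to the same $s$), so one must check that the combinatorial reorganization is compatible on both sides of the WDVV identity — i.e. that the bijection between log-side index data and orbifold-side index data is the \emph{same} on the left and right of the equation, so that the orbifold WDVV relation descends cleanly. A secondary subtlety is ensuring that the log étale modification chosen is fine enough that \emph{all} relevant orbifold invariants are in the ``large $r$'' stable range where the orbifold/log comparison of \cite{logroot, logroot2} and the wall-crossing-free behavior hold; this is handled by taking $r$ sufficiently divisible and the modification sufficiently refined, which is possible since only finitely many invariants are involved. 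Once associativity of the structure constants is established, the final sentence — that they define an associative product on $R_{(X,D)}$ — is immediate from the definition of the product in the theta basis.
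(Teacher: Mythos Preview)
Your plan is essentially the paper's proof under Assumption \ref{lcy}(1) (the nef/antinef case), and there it is correct: pass to a common slope-sensitive modification, use birational invariance of the log structure constants, identify each $N_{p,q,r}^{\textbf{A}}$ with the corresponding orbifold structure constant, and invoke Tseng--You's associativity of $R^{orb}$. The bookkeeping you flag (matching the index sets under $\pi$) is exactly what the paper does in equations \ref{liftprod}--\ref{assonbl}.

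However, there is a genuine gap for the general log Calabi-Yau case (Assumption \ref{lcy}(2)). Tseng--You's associativity theorem for $R^{orb}$ is proved only under the semipositivity hypothesis; it is not available for arbitrary log Calabi-Yau $(X,D)$. The orbifold WDVV relation (equation \ref{WDVV2}) is available, but it sums over \emph{all} contact orders $s \in \ZZ^n$ --- not just $s \in B(\ZZ)$ --- and over \emph{all} basis elements $T_{s,k}$ of $H^*(D_s)$ --- not just the fundamental class. Your sentence ``the gluing at an orbifold node \ldots\ is precisely what reproduces the sum over $s \in B(\mathbb{Z})$'' hides the real problem: one must show that every term with $s \notin B(\ZZ)$ or with $T_{s,k} \neq [1]_s$ vanishes. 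This is the content of Lemma \ref{asslem}, and the paper proves it by going \emph{back} to the log side: via Corollary \ref{logorb} one reduces the orbifold vanishing to vanishing of $[\mathscr{M}(X,\pmb\tau,z)]^{vir}$ for all realizable types $\pmb\tau$, and then runs a tropical/dimension argument (Lemma \ref{spine}, a variant of the no-tail lemma of \cite{int_mirror}) to show the spine of any contributing $\tau$ stays in $B$. Only after this does the virtual dimension count force $e_s = 0$ and $T_s^k = [1]_s$. Your proposal does not anticipate this step, and without it the argument does not close in the general case.
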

%ADD IN ROOT STACK COMMENT LATER

%In addition to providing another proof of associativity of the Gross-Siebert mirror algebra, the proof also allows us to construct an analogous ring for certain log Calabi-Yau DM stacks:
%
%\begin{theorem}
%For $(X,D)$ a log DM stack realized as a generalized root stack of a log Calabi-Yau variety $(\overline{X},\overline{D})$, there is an associative algebra structure on the mirror module $\oplus_{r \in \Sigma(X)} H^{0,CR}(D_r)\vartheta_r$ with $H^{*,CR}(X)$ the Chen-Ruan cohomology of the orbifold $X$. 
%\end{theorem}

\begin{theorem}[Conjecture $9.2$ \cite{int_mirror}]\label{mthm2}
Let $\beta$ be a type of tropical map with curve class $\textbf{A}$ and $k+1$ marked points $x_{out},x_1,\ldots,x_k$ with contact order $0,p_1,\ldots,p_k$ respectively associated with integral points of $B$. Define the invariant:
\[N_{p_1,\ldots,p_k}^{\textbf{A}} = \int_{[\mathscr{M}(X,\beta)]^{vir}} \psi_{x_{out}}^{k-2}ev_{x_{out}}^*(pt)\]
 Then the $\vartheta_0$ coefficient of $\vartheta_{p_1}\cdots\vartheta_{p_k}$ is:
\[\sum_{\textbf{A}} N_{p_1,\ldots,p_k}^\textbf{A}z^\textbf{A}\]

\end{theorem}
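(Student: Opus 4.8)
The plan is to argue by induction on the number $k$ of theta functions, using the comparison of Theorem~\ref{mthm1} to transport the statement, one curve class at a time, to the orbifold Gromov-Witten theory of a root stack, where the genus-zero topological recursion relations, the splitting axiom and the string and dilaton equations are all available, and then to descend the resulting identity back to $(X,D)$. The base case $k=2$ is essentially a matter of definitions: the insertion $\psi_{x_{out}}^{0}$ is trivial, so $N^{\textbf{A}}_{p_1,p_2}$ is precisely the three-pointed point-constrained log invariant $N^{\textbf{A}}_{p_1,p_2,0}$ that Gross and Siebert use to define the product in the theta basis, whence $\sum_{\textbf{A}} N^{\textbf{A}}_{p_1,p_2} z^{\textbf{A}}$ is by construction the $\vartheta_0$-coefficient of $\vartheta_{p_1}\vartheta_{p_2}$.

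For the inductive step fix $k \ge 3$ and a curve class $\textbf{A}$; it suffices to match the coefficients of $z^{\textbf{A}}$ on the two sides of the asserted identity. Only finitely many splittings $\textbf{A} = \textbf{A}_1 + \cdots + \textbf{A}_m$ into effective curve classes occur in the relevant iterated products, so by Theorem~\ref{mthm1} together with the birational invariance results of \cite{pbirinv}, \cite{logroot} and \cite{logroot2} we may choose a single log \'etale modification $\pi \colon \tilde{X} \to X$ and an integer $r \gg 0$ so that, writing $\mathcal{X}$ for the root stack of $(\tilde{X},\tilde{D})$ with multiplicity $r$ along each component, the following hold: every log structure constant $N^{\textbf{A}'}_{p,q,s}$ with $\textbf{A}' \le \textbf{A}$ appearing below equals a sum of orbifold structure constants of $\mathcal{X}$, and the log descendent invariant $N^{\textbf{A}}_{p_1,\ldots,p_k}$ equals the corresponding orbifold descendent invariant $\tilde{N}^{\textbf{A}}_{p_1,\ldots,p_k}$ on $\mathcal{X}$. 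For the second point one uses that $x_{out}$ has contact order $0$, hence is an interior marked point whose cotangent line --- and therefore the class $\psi_{x_{out}}$ --- is left unchanged both by the log \'etale modification and by passage to the root stack; combined with the identification of virtual cycles underlying Theorem~\ref{mthm1} and with the twisted punctured invariants of generalized root stacks introduced above, this promotes the comparison to the $\psi_{x_{out}}^{k-2}$-twisted invariant.

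On $\mathcal{X}$ the statement now follows from standard orbifold Gromov-Witten arguments. Pulling back the genus-zero topological recursion relation from $\overline{M}_{0,k+1}$ and iterating it $k-2$ times expresses $\psi_{x_{out}}^{k-2}$ as a sum of boundary strata at whose distinguished vertex $x_{out}$ is trivalent; applying the splitting axiom and summing each internal edge over the theta basis of the orbifold state space rewrites $\tilde{N}^{\textbf{A}}_{p_1,\ldots,p_k}$ as a sum over trees of products of orbifold structure constants, and by associativity of the orbifold product (Tseng-You, \cite{RQC_no_log}) this sum is exactly the $z^{\textbf{A}}$-coefficient of the $\vartheta_0$-coefficient of the iterated orbifold product $\vartheta^{orb}_{p_1}\cdots\vartheta^{orb}_{p_k}$. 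Running this back through the two comparisons of the previous paragraph and through Theorem~\ref{mthm1} --- which identifies the relevant coefficients of the log and orbifold products in curve class $\textbf{A}$ --- we conclude that the $z^{\textbf{A}}$-coefficient of the $\vartheta_0$-coefficient of $\vartheta_{p_1}\cdots\vartheta_{p_k}$ equals $N^{\textbf{A}}_{p_1,\ldots,p_k}$. Summing over $\textbf{A}$ completes the induction and the proof.

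I expect the main obstacle to be the upgrade of Theorem~\ref{mthm1} from structure constants to the descendent invariants $N^{\textbf{A}}_{p_1,\ldots,p_k}$: one must verify that the matching of virtual classes under the log \'etale modification and the root-stack construction is compatible with the cotangent line at the interior marked point $x_{out}$, and that the orbifold version of the Frobenius statement on $\mathcal{X}$ is insensitive to the large choice of $r$ and to the particular modification used. Once these compatibilities are in hand, the remaining steps --- the boundary expansion of $\psi_{x_{out}}^{k-2}$, the splitting axiom and the bookkeeping of trees --- are routine in the orbifold theory.
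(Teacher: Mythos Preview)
Your overall architecture --- induction on $k$, pass to a slope-sensitive modification, apply TRR on the orbifold side, descend --- matches the paper's, but there is a real gap at the heart of the inductive step.

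When you apply the splitting axiom after TRR, the internal edge is summed over a basis of the \emph{entire} orbifold state space: all contact orders $s \in \ZZ^n$ (including those with negative entries and those not lying in $B(\ZZ)$), and for each $s$ all cohomology classes $T_{s,k}$ of the stratum $D_s$. The theta basis corresponds only to the subspace with $s \in B(\ZZ)$ and insertion $[1]_s$. Your sentence ``summing each internal edge over the theta basis of the orbifold state space rewrites $\tilde{N}^{\textbf{A}}_{p_1,\ldots,p_k}$ as a sum over trees of products of orbifold structure constants'' is therefore not justified: you must prove that all the other terms vanish. Likewise, TRR splits the marked points into arbitrary subsets $S_1 \sqcup S_2$, and you need that only the splitting with $|S_1| = k-2$ survives. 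Neither vanishing is ``routine in the orbifold theory'' --- both require the log Calabi-Yau hypothesis in an essential way, and the paper proves them by passing \emph{back} to the logarithmic side (via Corollary~\ref{logorb}), analyzing the point-constrained moduli spaces $\mathscr{M}(X,\tau,z)$ tropical-type by tropical-type, and running a spine/no-tail argument in the style of Lemma~\ref{spine} to force $D_i^*(s)=0$ for all bad divisors and then a virtual-dimension count to force $e_s=0$, $|S_1|=k-2$, $T_{-s,k}=[1]_{-s}$. This is exactly the content you flagged as ``the main obstacle'' and then did not address.

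A secondary point: Theorem~\ref{mthm1} does not assert that the log and orbifold theta-products agree coefficient by coefficient on $(X,D)$ --- the paper's Section~5 gives explicit counterexamples --- so ``running this back through Theorem~\ref{mthm1}'' needs more care. What actually happens is that on the chosen slope-sensitive modification $\tilde{X}$ the individual structure constants coincide (Corollary~\ref{logorb}), so the recursion holds there in log invariants; one then sums over $\pi_*(\textbf{B}) = \textbf{A}$ and uses birational invariance of the log structure constants and of the descendent $N^{\textbf{A}}_{p_1,\ldots,p_k}$ to descend to $(X,D)$.
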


We emphasize that key to the proofs of the results above are exploiting the distinct natures of the combinatorial data which govern the log and orbifold Gromov-Witten theories. Invariants in log Gromov-Witten theory frequently decompose into contributions coming from fixed tropical data, but due to subtleties which include gluing of log maps, we typically cannot pullback relations between invariants coming from moduli of stable maps. Orbifold Gromov-Witten theory does possess relations derived from moduli of stable curves, but typically lacks the full tropical refinement present in the logarithmic theories. The relative disadvantages of either theory of circumvented by exploiting the log/orbifold comparison. The full proof for the results above must utilize the distinct advantages of each theory; it is not simply a matter of translating relations from orbifold Gromov-Witten theory into log Gromov-Witten theory.

%Assuming that the log Calabi-Yau in question in addition is affine and contains a Zariski dense torus, we show that the mirror algebras constructed in both cases are equal:
%
%\begin{corollary}\label{mcr2}
%For log Calabi Yau pairs $(X,D)$ in the sense of Definition \ref{lcy} (1) with $D$ snc, with $U:=X\setminus D$ affine and containing a Zariski dense torus, then $R_{(X,D)}^{orb} = R_{(X,D)}^{log}$. In particular, $R_{(X,D)}^{orb}$ satisfies the same log \'etale invariance that $R_{(X,D)}^{log}$ satisfies.
%\end{corollary}

To begin with, we will recall the relative Gromov-Witten invariants used to define the orbifold and log mirror algebras, and highlight through example how these classes of invariants typically do not coincide. We then introduce twisted log Gromov-Witten theory for root stacks defined by certain log \'etale modifications and generalize the main result of \cite{pbirinv} which will be useful in this setting. Following this, we proceed to a proof of Theorem \ref{mthm1}, and prove Corollary \ref{mcr1} assuming a semipositivity condition used by Tseng and You in \cite{RQC_no_log}. To remove this assumption, we provide an argument for the vanishing of certain orbifold invariants based on the vanishing of certain log invariants, modifying an argument which appeared in \cite{int_mirror}. This vanishing result in turn allows us to use the WDVV and TRR relations in the orbifold theory to conclude the desired relations in the log theory.

\subsection{Future Work}

Along with constructing a deformation of the Stanley-Reisner ring of $\Sigma(X)$ from large $r$ orbifold invariants of a log Calabi-Yau pair $(X,D)$, Tseng and You also define a quantum product on relative quantum cohomology for arbitrary simple normal crossings pairs $(X,D)$. This is a graded algebra structure on the cohomology of the inertia stack for arbitrarily large $r$. In the log Calabi-Yau setting, this product rule restricts to define a product on the degree $0$ part of the ring. The relations in this paper suggest an analogous construction in log Gromov-Witten theory, and a more thorough treatment of the state space should enable this to be accomplished. 

%Additionally, as will be the case in the given example, it can happen that although we do not in general have equality of structure constants, the underlying algebras are isomorphic nonetheless. In particular, we can express logarithmic theta functions and $\kk[NE(X)]$ linear combinations of orbifold theta functions and vice versa in these examples. 

Additionally, You in \cite{relq_bir} has previously studied in various settings how the relative orbifold quantum cohomology ring changes under birational modifications. Motivated by the crepant transformation conjecture, he predicts that while individual invariants might not satisfy any reasonable interpretation of birationally invariance, their generating series might be better behaved, with their difference controlled by analytic continuation in Novikov parameters.  A more thorough understanding of the behavior of orbifold invariants under strata blowups, when combined with the results in this paper, should yield additional tools and techniques for computing and studying the intrinsic mirror construction.

The relations in this paper should also hold for more general log Calabi-Yau DM stacks. The main missing ingredient needed to carrying this out presently is the development of a well-behaved theory of twisted log maps to log DM stacks. Once this is accomplished though, the arguments in this paper should generalize to the DM stack setting, and in particular produce mirror algebras for these targets. A generalized mirror construction will likely be useful in studying mirrors to skew-symmetrizable cluster varieties in the same vein as \cite{clustermirr}, as well as mirror LG models to smooth Fano orbifolds. We will return to these questions in future work.

\section{Acknowledgements}
I would like to thank Robert Crumplin, Mark Gross, Ajith Kumaran, Navid Nabijou, Dhruv Ranganathan and Fenglong You for many useful discussions related to this work. In particular, I would like to thank Mark Gross for reviewing an earlier draft of this document, and Navid Nabijou and Dhruv Ranganathan for keeping me updated on their progress with their joint work with Luca Battistella. This work was done with the financial support of the ERC grant ``Mirror Symmetry in Algebraic Geometry" as well as the Heilbronn Institute for Mathematical Research and Imperial College London.

\section{Tropical and Logarithmic preliminaries}
We suppose throughout that we are working over an algebraically closed field $\kk$ with characteristic $0$. We begin by briefly reviewing the necessary tropical and logarithmic geometry we require, as well as review the main results of \cite{logroot} and \cite{logroot2}. Given an algebraic stack with log structure $Y$, we denote the underlying stack over schemes by $\underline{Y}$ throughout.

\subsection{Tropicalization and Artin Fans}
Consider a pair $(X,D)$ with $X$ a smooth projective variety and $D$ a simple normal crossings divisor with $D = D_1 + \ldots + D_k$ a decomposition of the divisor $D$ into irreducible components. Each $D_i$ is  determined by a section $s_i$ of a line bundle $L_i \in Pic(X)$ up to scaling by $\kk^*$. By definition, this induces a morphism $(L_i,s_i): X \rightarrow [\mathbb{A}^1/\mathbb{G}_m]$, and taking the product of these morphisms induces a morphism $X \rightarrow [\mathbb{A}^k/\mathbb{G}_m^k]$. We let $\mathcal{X}$ be the minimal open substack of $[\mathbb{A}^k/\mathbb{G}_m^k]$ through which the above morphism factors. Each of the stacks above admit natural divisorial log structures and the log algebraic stack $\mathcal{X}$ is called the Artin fan for the pair $(X,D)$. 

The use of Artin Fans in logarithmic geometry allows us to translate constructions and calculations in a more combinatorial/tropical setting into the geometric setting. In this paper, we consider the tropical setting to be the category of rational polyhedral cone complexes, or \textbf{RPCC}. Its objects consist of rational polyhedral cone complexes, which are given by collections of integral convex cones glued along face inclusions. For a cone $\sigma$, we denote its integral points by $\sigma_\NN$. For a cone complex $\Sigma$, we denote by $\Sigma(\ZZ)$ the set of morphisms $\ZZ \rightarrow \sigma_{\NN}^{gp}$ for some $\sigma \in \Sigma$. By working in the more general context of cone stacks, defined in \cite{stacktrop} Definition $2.7$, a useful result of \cite{stacktrop} is the following equivalence of categories:

\begin{lemma}\label{equiv}
There is an equivalence of $2$-categories between the categories of Artin fans and cone stacks. On a cone $\sigma \in Ob(\textbf{RPCC})$, the equivalence is given by sending $\sigma$ to $\mathcal{A}_\sigma$. 
\end{lemma}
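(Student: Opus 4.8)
The plan is to build explicit $2$-functors in both directions and verify that they are mutually quasi-inverse, checking first on the atomic objects --- a single (possibly stacky) cone $\sigma$ on one side, the Artin cone $\mathcal{A}_\sigma$ on the other --- and then propagating the equivalence to all cone stacks and all Artin fans by a descent argument along the common combinatorial index category. So the problem splits into (i) matching the atoms together with their morphisms, and (ii) a formal gluing statement.

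First I would fix the building blocks and their morphisms. For a sharp rational polyhedral cone $\sigma$ with dual monoid $S_\sigma = \sigma^\vee \cap M$ one sets $\mathcal{A}_\sigma = [\operatorname{Spec}\kk[S_\sigma] / \operatorname{Spec}\kk[S_\sigma^{gp}]]$ with its toric log structure, and more generally one allows a finite group of cone automorphisms, producing a further stack quotient. By definition an Artin fan is a logarithmic algebraic stack admitting a strict \'etale cover by a disjoint union of such $\mathcal{A}_\sigma$, with fibre products again of this shape; a cone stack is a category fibred in groupoids over $\textbf{RPCC}$ (with face morphisms) which is a sheaf for the face topology and is locally representable by cones. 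Both notions are, essentially by construction, ``stacks'' glued from the atoms over the same index data, so once the atoms and their morphism groupoids are matched, a descent argument should finish the job.

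The crux is step (i): I would show that the natural functor from the groupoid whose objects are cone morphisms $\sigma \to \tau$ and whose $2$-cells are the cone automorphisms to the groupoid of logarithmic morphisms $\mathcal{A}_\sigma \to \mathcal{A}_\tau$ is an equivalence. The content is a rigidity statement: a strict morphism of Artin cones is determined by, and can be arbitrary in, the induced map of characteristic monoids $\overline{M}_\tau \to \overline{M}_\sigma$, which is exactly the datum of a cone morphism; torus-equivariance rules out any further moduli, and the inertia of $\mathcal{A}_\sigma$ recovers precisely the cone automorphism group. This is where I expect the real work to lie, since one must treat non-strict but log \'etale (``monomial'') morphisms carefully and keep the $2$-categorical bookkeeping of automorphisms straight throughout.

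For step (ii), I would define tropicalization $\mathcal{X} \mapsto \Sigma(\mathcal{X})$ by declaring the fibre over a cone $\sigma$ to be the groupoid of strict morphisms $\mathcal{A}_\sigma \to \mathcal{X}$; this is a cone stack because the characteristic sheaf of $\mathcal{X}$ is constant on strata and the \'etale-local model forces local representability by a cone. Conversely I would define $\mathcal{C} \mapsto \mathcal{A}_{\mathcal{C}}$ by realising $\mathcal{C}$ as a $2$-colimit of the cones appearing in an atlas and taking the corresponding colimit of Artin cones in logarithmic stacks, with independence of the atlas following from the fact that $\sigma \mapsto \mathcal{A}_\sigma$ sends face morphisms to strict open immersions and the relevant cone fibre products to Artin-cone fibre products. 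Granting the equivalence on atoms, the unit and counit are isomorphisms atom-wise, and since both target $2$-categories satisfy descent for the respective topologies and are generated under $2$-colimits by the atoms, a standard descent argument --- most cleanly phrased by identifying each side with the $2$-category of sheaves on the site of stacky cones --- upgrades this to the asserted equivalence of $2$-categories. The one remaining subtlety is to check that tropicalization preserves the $2$-colimits used to construct $\mathcal{A}_{\mathcal{C}}$, so that both round-trips are canonically the identity.
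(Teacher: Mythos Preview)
The paper does not actually prove this lemma; it is stated without proof as a quotation of a result from \cite{stacktrop} (see the sentence immediately preceding the lemma). Your outline --- identifying cones with Artin cones and their morphisms, then extending to all cone stacks and Artin fans by a descent/colimit argument over the common index category --- is essentially the strategy used in that reference, so there is nothing to compare against here beyond noting that your sketch is on the right track for reconstructing the cited proof.
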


Given a cone complex $\Sigma$, we will refer to its corresponding Artin fan as $\mathcal{A}_{\Sigma}$. We will also occasionally refer to idealized Artin cones in this paper, denoted by $\mathcal{A}_{\sigma,K}$ for $K \subset \sigma^{\vee}_{\NN}$ an ideal. This is an idealized log stack whose underlying stack is given by:

\[\mathcal{A}_{\sigma,K} = [(Spec\kk[\sigma^{\vee}_\NN]/K)/Spec\kk[\sigma_\NN^{gp}]],\]
and which carry canonical log and idealized log structures. 

When $(X,D)$ is a simple normal crossings pair, forgetting the morphism $X \rightarrow \mathcal{X}$ and remembering only the target is the same information as the dual cone complex of $(X,D)$, which we denote by $\Sigma(X)$. The forgetful morphism $\mathcal{X} \rightarrow [\mathbb{A}^1/\mathbb{G}_m]$ remembering only a single divisor $D_i$ corresponds under the correspondence to a morphism of rational polyhedral cone complexes $\Sigma(X) \rightarrow \mathbb{R}_{\ge 0}$ i.e. a PL function on the complex $\Sigma(X)$. We call the function induced by a divisor $D$ by $D^*$. If we restrict to a cone $\sigma \in \Sigma(X)$, this morphism induces a group homomorphism on integral tangent vectors at any point $p \in int(\sigma)$ contained in the interior of $\sigma$, which we may identify with a group homomorphism $\sigma_\NN^{gp} \rightarrow \ZZ$.  

One important input coming from this correspondence above is the relation between subdivisions and refinements of the Artin fan and log \'etale modifications of $X$. Given a cone $\sigma$ and a conic subdivision $\tilde{\sigma}\rightarrow \sigma$, we have a corresponding morphism of Artin fans $\mathcal{A}_{\tilde{\sigma}} \rightarrow \mathcal{A}_{\sigma}$. By \cite{bir_GW} Corollary $2.6.7$, this morphism is log \'etale and proper. Moreover, if we have a strict smooth map $X \rightarrow \mathcal{A}_\sigma$, we may pullback this map to produce a proper log \'etale and birational map $\tilde{X} \rightarrow X$. Alternatively, if we consider a morphism of cones $\sigma' \rightarrow \sigma$ induced by a refinement of lattices, we may pullback as before to produce a log \'etale modification. In the setting of a pair $(X,D)$ above, these produce orbifold enhancements of the target which we call generalized root stack constructions along the boundary divisor $D$. 
More generally, pulling back subdivisions and refinements of cone complex $\mathcal{X}$ induce log \'etale modifications of $X$. By the birational invariance of punctured log invariance, we are free to choose a convenient birational model of $X$ of the form above to study relations in the log Gromov-Witten invariants of $(X,D)$ of interest.

\subsection{Tropical maps and punctured log maps}

Given a log Calabi-Yau pair $(X,D)$, the enumerative geometry used to construct its mirror algebra is encoded in the theory of punctured log maps, constructed by Abramovich, Chen, Gross and Siebert in \cite{punc}, which we briefly recall below, omiting details which will not be important for us in this context:

\begin{definition}\label{defpunc}
Let $Y = (\underline{Y},\mathcal{M}_Y)$ be an fs log stack, together with a decomposition $\mathcal{M}_Y = \mathcal{M} \oplus_{\mathcal{O}_Y^\times} \mathcal{P}$. A puncturing of $Y$ along $\mathcal{P}$ is a fine subsheaf of monoids $\mathcal{M}_{Y^\circ} \subset \mathcal{M} \oplus_{\mathcal{O}_Y^\times} \mathcal{P}^{gp}$, containing $\mathcal{M}_Y$, together with a structure map $\alpha_{Y^\circ}:\mathcal{M}_{Y^\circ} \rightarrow \mathcal{O}_Y$, satisfying:

\begin{enumerate}
\item The inclusion $p^\flat: \mathcal{M}_Y \rightarrow \mathcal{M}_{Y^\circ}$ is a morphism of log structures on $\underline{Y}$.

\item For any geometric point $x \in Y$, let $s_x \in \mathcal{M}_{Y^\circ,x}\setminus \mathcal{M}_{Y,x}$. Representing $s_x = (m_x,p_x) \in \mathcal{M}_x \oplus_{\mathcal{O}^\times_{Y_x}} \mathcal{P}^{gp}_x$, we have $\alpha_{Y^\circ}(s_x) = \alpha_{\mathcal{M}}(m_x) = 0$ in $\mathcal{O}_{Y,x}$.

\end{enumerate}

A punctured log curve is a log curve $C \rightarrow S$, together with a puncturing along $\mathcal{P}$ induced by marked sections of $C \rightarrow S$. A stable punctured log map $C^\circ \rightarrow X$ is a logarithmic morphism from a prestable punctured log curve to $X$ such that the underlying morphism of schemes is a stable map.
\end{definition}

Similar to the theory of ordinary log stable maps, every punctured log map has a ``combinatorial shadow" induced by tropicalization. More precisely, to any punctured log stable map $f: C \rightarrow X$ over a logarithmic point $S$ and letting $\sigma = Hom_{mon}(\overline{\mathcal{M}}_S,\mathbb{R}_{\ge 0})$, the log structure encodes a metrization of the dual graph of $C$ by a length function $l: E(G) \rightarrow \overline{\mathcal{M}}_S$, and a PL function $\Sigma(f)_p: G \rightarrow \Sigma(X)$ for every point $p \in \sigma$. We note such data gives the following discrete invariants:

\begin{enumerate}
\item A function $\pmb\sigma(-):E(G)\cup L(G)\cup V(G) \rightarrow \Sigma(X)$ assigning to a feature of $G$ the smallest cone of $\Sigma(X)$ which contains it's image under $\Sigma(f)$.
\item For every edge or leg $e \in E(G) \cup L(G)$ an integral tangent vector $u_e \in \pmb\sigma(e)^{gp}_\NN$. 
\end{enumerate}
We may also keep track of the genus distribution on the dual graph by a function $g: V(G) \rightarrow \NN$. In this paper, we will always be working with genus $0$ domains, so this additional data will be forgotten from now on. The data $(G,l,\pmb\sigma,u)$ defines the tropical type of the log stable map $f$. A choice of the tuple of data above is called realizable if there is a PL map from meterized dual graph $G$ to $\Sigma(X)$ of the given tropical type. In such a situation, there exists a cone $\tau$ and family of family of tropical maps $(f,\Gamma)$ defined over $\tau$ which are the moduli spaces of tropical curves of type $\tau$ and the universal family lying over it respectively. The primary utility that punctured log maps provide tropically is that the legs are possibly bounded. 

Applying functorial tropicalization to a punctured log map $C^\circ \rightarrow X$ produces a map of cone complexes $\Sigma(C) \rightarrow \Sigma(X)$, which is a punctured tropical map defined as follows:

\begin{definition}
Given a family of ordinary tropical curves $\pi_{(G,l)}: \Gamma_\sigma \rightarrow \sigma$, a puncturing $\Gamma_\sigma^\circ \rightarrow \sigma$ of $\Gamma_\sigma$ is an inclusion of a sub cone complex $\Gamma_{\sigma}^\circ \rightarrow \Gamma_\sigma$ over $\sigma$ which is an isomorphism away from cones of $\Gamma_\sigma$ associated with legs of $G$. Moreover, for cones $\sigma_l$ associated with punctured legs of $G$, $\sigma_l$ must intersect the interior of the target cone, and there exists a section $\sigma \rightarrow \sigma_l$ which when composed with $\sigma_l \rightarrow \Gamma_\sigma$ is a section associated with the unique vertex of $G$ contained in the leg $l$. 
\end{definition}

%Note that because the legs $l \in L(G)$ are assumed to be unbounded, in order for a tropical type to be realizable, we require $l \in \pmb\sigma(l)_\NN \subset \pmb\sigma(l)_\NN^{gp}$. To weaken this assumption, we will want to work in addition with punctured log stable maps. This is a generalization of the theory of log stable maps, constructed by Abramovich, Chen, Gross and Siebert in \cite{punc} in which legs corresponding to marked points no longer are required to be unbounded. In particular, we can work with tropical types in which the contact order $u_l$ associated with a leg $l \in L(G)$ is not necessarily contained $\pmb\sigma(l)_\NN$. 

We bundle the data of the contact orders associated to legs $u_l$ as well as a degree $\textbf{A} \in H_2(X)$ into a pair $\beta = ((u_l),\textbf{A})$ which we call a type of punctured log map. The main result of \cite{punc} is a construction of a finite type DM stack $\mathscr{M}(X,\beta)$ with log structure parameterizing punctured log stable maps to $X$ of type $\beta$. There is also an algebraic stack $\mathfrak{M}(\mathcal{X},\beta)$ with log structure parameterizing prestable punctured log maps of type $\beta$ to the Artin fan $\mathcal{X}$, and composition induces a map $\mathscr{M}(X,\beta) \rightarrow \mathfrak{M}(\mathcal{X},\beta)$. This morphism carries a perfect obstruction theory, which facilitates virtual pullback of classes on $\mathfrak{M}(\mathcal{X},\beta)$ in the formalism of \cite{vpull}. 

It will be useful for various arguments in this paper to enrich the target of morphism carrying the perfect obstruction theory. Specifically, given a punctured log curve $C \rightarrow \mathcal{X}$, recall that a marked point $p$ of a puncture with contact order $u_p$ is forced to map to a stratum $\mathcal{D}_p$ of the log stack $\mathcal{X}$.  Thus, on the level of underlying stacks, we have a morphism $ev_p: \underline{\mathfrak{M}(\mathcal{X},\beta)} \rightarrow \underline{\mathcal{D}_p}$. Moreover, the stratum of the Artin fan of $X$ specifies a stratum $D_p$ of $X$, and the strict morphism $X \rightarrow \mathcal{X}$ restricts to a morphism $D_p \rightarrow \mathcal{D}_p$. Hence, for a collection of marked points $p_1,\ldots p_k$, of the type $\beta$, we define:

\[\mathfrak{M}^{ev}(\mathcal{X},\beta) := \mathfrak{M}^{ev(p_1,\ldots,p_k)}(\mathcal{X},\beta) = \mathfrak{M}(\mathcal{X},\beta)\times_{\prod{i=1}^k\underline{\mathcal{D}_{p_i}}} \prod_{i=1}^k \underline{D_{p_i}}\]
The morphism $\mathscr{M}(X,\beta) \rightarrow \mathfrak{M}(\mathcal{X},\beta)$ certainly factors through the projection $\mathfrak{M}^{ev}(\mathcal{X},\beta)$, and the resulting morphism $\mathscr{M}(X,\beta)\rightarrow \mathfrak{M}^{ev}(\mathcal{X},\beta)$ also carries a perfect obstruction theory. 

\subsection{Construction of virtual classes on $\mathscr{M}(X,\beta)$}

An initially troubling feature of theory of punctured log maps is the fact that if we fix only a type $\beta$, unlike in the theory of ordinary log maps, we do not have a canonical virtual fundamental class on $\mathscr{M}(X,\beta)$. Indeed, the moduli stack $\mathfrak{M}(\mathcal{X},\beta)$ might not be equidimensional. There are at least two paths for dealing with this issue. One is to note that by Proposition $3.28$ of \cite{punc}, if we fix additional tropical data in the form of a realizable tropical type $\tau$, then the moduli stack $\mathfrak{M}(\mathcal{X},\tau)$ of punctured log maps of type $\tau$ is equidimensional. Moreover, the moduli stack $\mathfrak{M}(\mathcal{X},\beta)$ is stratified by realizable tropical types of type $\beta$, so we produce classes on $\mathscr{M}(X,\beta)$ associated with every such tropical type. Note that the dimension of these classes will differ, and can be computed using the dimension of the associated tropical moduli problem

An alternative approach to equipping this moduli stack with an equidimensional class is suggested by Battistella, Nabijou and Ranganathan in \cite{logroot2}. This assumes the target is \emph{slope sensitive}, a property originally introduced in \cite{logroot} Section $4.1$ and $4.2$. Critically, for any discrete data $\beta$, this property can be assured for a corresponding set of discrete data $\beta'$ on a log \'etale modification $\tilde{X} \rightarrow X$.  Assuming this condition, we may also equip $\mathfrak{M}(\mathcal{X},\beta)$ with a virtual fundamental class. By taking the virtual pullback of this class, we produce a class on $\mathscr{M}(X,\beta)$. Letting $D_i$ be an irreducible component of $D$, and $e_i = |\{p\text{ marked point}| D_i^*(u_p))<0\}|$, the dimension of this class is given by:

\begin{equation}\label{vdimorb}
vdim\text{ }\mathscr{M}(X,\beta) = deg\text{  }c_1(T_X^{log}) + (dim\text{ }X-3)(1-g) + n - \sum_{i=1}^k e_i.
\end{equation}
%Using this class, we record the main result of \cite{logroot2}:

%The moduli stack of log stable maps carries a log structure whose strata correspond to log stable maps of fixed tropical type. For a geometric point $[(C,f)]$ of $\mathscr{M}(X)$, the stalk of the ghost sheaf $\overline{\mathcal{M}}_{\mathscr{M}(X),[(C,f)]}$ is given by the dual of cone associated to the tropical type of $[(C,f)]$.  
%
%
%To every family of log stable maps $f: C \rightarrow X$, the fact that this is a morphism of log schemes implies that there exists a corresponding morphism of tropical curves

\section{Intrinsic Mirror constructions}
Associated to the pair $(X,D)$ is the rational polyhedral cone complex $\Sigma(X)$. When $X$ is smooth and $D$ is simple normal crossings, this is the dual cone complex of the divisor $D$. We recall from \cite{int_mirror} two possible restrictions on the geometry of pair $(X,D)$ that are sufficient to construct mirror algebras:

\begin{assumption}\label{lcy}
An admissible pair is a log smooth pair $(X,D)$ which satisfies one of two of the following conditions:
\begin{enumerate}
\item $c_1(T_X^{log})$ is nef or antinef.
\item $K_X + D =_\mathbb{Q} \sum_i a_iD_i$, with $a_i \ge 0$.
\end{enumerate}
If $(X,D)$ satisfies the second of the above conditions, we say $(X,D)$ is log Calabi-Yau. 
\end{assumption}

If the pair $(X,D)$ is log Calabi-Yau in the second sense, we call a divisor $D_i$ \emph{good} if $a_i = 0$. We adopt the terminology that divisors which are not good are called \emph{bad}. We define $(B,\mathscr{P})$ to be the subcomplex of $\Sigma(X)$ consisting of the cones of $\Sigma(X)$ associated to strata which are contained only in good divisors. If $(X,D)$ satisfies condition $1$ above, then we define $(B,\mathscr{P})$ to be the cone complex $\Sigma(X)$. 

To define the logarithmic mirror algebra, we first pick a finitely generated monoid $Q \subset H_2(X)$ containing all effective curve classes, and $I \subset \kk[Q]$ a monomial ideal such that $\sqrt{I}$ is the maximal monomial ideal of $\kk[Q]$. We write $\kk[Q]/I = S_{X,I}$. The mirror algebra $R^{log}_{(X,D)}$ first will be a free $S_{X,I}$ module:

\[R^{log}_{(X,D)} = \oplus_{p \in B(\ZZ)} \vartheta_p\cdot S_{X,I}.\]

The generators of the free $S_{X,I}$ module above are called theta functions. We equip $R^{log}_{(X,D)}$ with an $S_{X,I}$ algebra structure via structure constants for the product of theta functions. For $p,q,s \in B(\ZZ)$, the structure constant $N_{p,q,s} := \sum_{\textbf{A}} N_{p,q,s}^\textbf{A} z^\textbf{A} \in S_{X,I}$ given by the coefficient appearing in front of $\vartheta_s$ in the product $\vartheta_p\vartheta_q$ will be a certain series of $3$-pointed punctured Gromov-Witten invariants indexed by curve classes, with two positive contact orders $p$ and $q$, and one negative contact order $-s$.

 More precisely, we first consider the evaluation space for log points with contact order $-s$. This is a logarithmic stack, whose underlying stack is given by $X_s \times B\mathbb{G}_m$, with $X_s$ the closure of the locally closed stratum $X_s^\circ \subset X$ associated with the smallest cone $\sigma \in \Sigma(X)$ containing $s$, and whose log structure is a sub log structure of the pullback of the product log structure $X\times [\mathbb{A}^1/\mathbb{G}_m]$, whose ghost sheaf is given by:

\begin{equation}\label{logev}
\Gamma(U,\overline{\mathcal{M}}_{\mathscr{P}(X,s)}) = \{(m,s(m)) \text{ }|\text{ }m \in \Gamma(U,\overline{\mathcal{M}}_Z)\} \subset \Gamma(U,\overline{\mathcal{M}}_Z) \oplus \mathbb{N} = \Gamma(U,\overline{\mathcal{M}}_{\widetilde{\mathscr{P}(X,s)}})
\end{equation}
In the terminology of \cite{punc}, for every curve class $\textbf{A}$ associated with a non-zero element of $S_{X,I}$, there is a non-realizable decorated tropical type $\beta$ with a single vertex decorated by $\textbf{A}$, and $3$ legs with contact orders $p_1$, $p_2$, and $-s$. There is a corresponding moduli space of punctured log maps $\mathscr{M}(X,\beta)$, which by \cite{int_mirror} Proposition $3.3$ admits an evaluation map $\mathscr{M}(X,\beta) \rightarrow \mathscr{P}(X,s)$. We let $W = B\mathbb{G}_m^\dagger$ be the log stack with underlying stack $B\mathbb{G}_m$ and log structure induced by pullback along the closed immersion $B\mathbb{G}_m \rightarrow [\mathbb{A}^1/\mathbb{G}_m]$. Letting $z \in X_s$, then by Proposition $3.8$ of \cite{int_mirror}, there is a map $W \rightarrow \mathscr{P}(X,s)$ which maps $W$ to $z \times W \subset \mathscr{P}(X,s)$. We then consider the fine and saturated fiber product $\mathscr{M}(X,\beta,z):= \mathscr{M}(X,\beta) \times_{\mathscr{P}(X,s)}^{fs} W$. We may similarly construct $\mathfrak{M}^{ev}(\mathcal{X},\beta,z) := \mathfrak{M}^{ev}(\mathcal{X},\beta)\times_{\mathscr{P}(X,s)}^{fs} W$, and the morphism $\mathscr{M}(X,\beta,z) \rightarrow \mathfrak{M}^{ev}(\mathcal{X},\beta,z)$ pulled back from $\mathscr{M}(X,\beta)\rightarrow \mathfrak{M}^{ev}(\mathcal{X},\beta)$ carries the pulled back obstruction theory. In particular, when $(X,D)$ is log Calabi-Yau, $\mathscr{M}(X,\beta,z)$ comes with a virtual fundamental class of virtual dimension $0$. The structure constant $N_{p,q,r}^\textbf{A}$ are then defined as: 
\[N_{p_1,p_2,s}^\textbf{A} = deg [\mathscr{M}(X,\beta,z)]^{vir}.\] 
If $(X,D)$ satisfies condition $1$ of Assumption \ref{lcy}, then the structure constant is as above if $vdim\text{ }\mathscr{M}(X,\beta,z) = 0$, and is defined to be zero otherwise. The main result of \cite{int_mirror} is that these structure constants define an associative product on $R^{log}_{(X,D)}$.

In the orbifold setting, assuming in addition that the boundary divisor $D$ has simple normal crossings and is an admissible pair in the sense of Definition \ref{lcy}(1), Tseng and You in \cite{RQC_no_log} define analogous invariants giving these structure constants. In loc cit., the authors observe that the contact orders $p,q,-s$ and curve class $\textbf{A}$ may also be used to construct a sequence of moduli stack $\mathscr{M}^{orb}(X_r,\beta)$, with $r = (r_i)$ a vector of pairwise relatively prime positive integers indexed by the connected components of the boundary and $X_r$ the corresponding stack theoretic root of the boundary divisor associated with $r$. In the orbifold setting, the negative contact orders are interpreted as a high age orbifold point. The moduli space $\mathscr{M}^{orb}(X_r,\beta)$ is a Deligne-Mumford stack with a perfect obstruction theory with $vdim\text{ }\mathscr{M}^{orb}(X_r,\beta) = dim\text{ }D_s$. 

To fix notation which will be used throughout, we let $H^*(-)$ be the singular cohomology functor, and pick a basis $\{T_{i,s}\} \in H^*(D_s)$ for the singular cohomology of the stratum associated with $s \in \ZZ^k$ a contact order with $D = \sum_{i=1}^k D_i$. Since we are assuming $X$ is simple normal crossings, there is a Poincar\'e dual basis $T_s^i$. Moreover, given a collection of contact orders $p_i \in \ZZ^k$, $\alpha_{i} \in H^*(D_{p_i})$, $\textbf{A} \in NE(X)$, and setting $m_{i,-}$ be the number of negative entries appearing in the contact order $m_i$, we define:

\begin{equation}\label{orbinv}
\langle \prod_i \psi^{a_i}[\alpha_i]_{p_i}\rangle_{\textbf{A}} := \prod_i r^{m_{i,-}}\int_{[\mathscr{M}^{orb}(X_r,\textbf{A},\textbf{p})]^{vir}} \prod_i\psi_{x_i}^{a_i}\cup ev_{x_i}^*([\alpha_i]).
\end{equation}

By Corollary $18$ of \cite{RQC_no_log}, this invariant stabilizes for $r_i >> 0$. In light of this stabilization, these invariants are usefully thought of as a Gromov-Witten invariant of the infinite root stack associated with the pair $(X,D)$. The authors of loc. cit. define the orbifold structure constants by:

\[N_{p_1,p_2,s}^{\textbf{A},orb}=\langle[1]_{p_1},[1]_{p_2},[pt]_{-s}\rangle_{\textbf{A}}.\]
In Theorem $37$ of \cite{RQC_no_log}, the authors show that these also define an associative product rule on $R^{log}_{(X,D)}$, using arguments involving the WDVV relations in orbifold Gromov-Witten theory. 

As logarithmic and orbifold Gromov-Witten theory are distinct methods at probing the enumerative geometry of the pair $(X,D)$, it is natural to ask about the relationship between these two enumerative theories. In genus $0$, the relationship was described in \cite{logroot} in the setting of non-negative contact orders, and \cite{logroot2} in the setting of general contact orders. We record their main result below:

\begin{theorem}\label{neglogorb}
Let $\beta$ be a tropical type which fixes a curve class $\textbf{A} \in NE(X)$ and tangency orders $p_1,\ldots,p_l$. Then assuming $\beta$ is slope sensitive and letting $[\mathscr{M}(X,\beta)]^{vir}$ be the refined virtual class on the punctured log moduli space, we have:

\[\int_{[\mathscr{M}(X,\beta)]^{vir}} \prod_i \psi_{x_i}^{a_i}ev_{x_i}^*([\alpha_i]) = \langle\prod_i \psi_{x_i}^{a_i}[\alpha_i] \rangle.\]

\end{theorem}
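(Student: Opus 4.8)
The plan is to prove the equality by constructing a single comparison morphism between the two moduli stacks and checking that it matches virtual classes and tautological insertions, with the normalization factor $\prod_i r^{m_{i,-}}$ in \eqref{orbinv} accounting for the gerbe structure at the marked points of negative contact order. First I would fix $r = (r_i)$ large and divisible enough relative to $\beta$ and recall that every orbifold stable map to the root stack $X_r$ carries a canonical underlying punctured log stable map to $X$: the divisorial log structure pulls back from that of $X_r$ after dividing contact orders by the roots, twisted markings become punctures, and large-age sectors produce negative contact orders. This yields a morphism $c \colon \mathscr{M}^{orb}(X_r,\beta) \rightarrow \mathscr{M}(X,\beta)$. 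The essential role of the slope sensitivity hypothesis is to force $c$, on the locus of maps of type $\beta$, to be proper and to become an isomorphism after rigidification: slope sensitivity says that the tropical curves of type $\beta$ meet the walls of $\Sigma(X)$ transversally enough that no logarithmic modification of $X$ is forced, so that for $r$ large a punctured log map of type $\beta$ lifts essentially uniquely to a twisted stable map, in the manner of the $r$-th root construction of Abramovich--Cadman--Wise. One then invokes the stabilization in Corollary $18$ of \cite{RQC_no_log} to see that the right-hand side of the asserted identity does not depend on the large $r$ chosen.

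Second — and this is where I expect the real work to lie — I would compare the perfect obstruction theories. Both $\mathscr{M}^{orb}(X_r,\beta)$ and $\mathscr{M}(X,\beta)$ (the latter equipped with the refined class coming from the slope-sensitive model, as in \cite{logroot2}) carry obstruction theories whose deformation spaces are governed by the cohomology of $f^*T^{log}_X$, using the standard isomorphism $T^{log}_{X_r} \cong \pi^* T^{log}_X$ for the root stack projection $\pi \colon X_r \rightarrow X$; the only discrepancy is combinatorial, coming from the difference between a twisted domain curve and a punctured domain curve and from the twist at the special points. On the locus of fixed type $\beta$ the contact orders are rigid, so this discrepancy contributes nothing to the relative obstruction theory, whence $c$ is compatible with the two theories and $c_*[\mathscr{M}^{orb}(X_r,\beta)]^{vir} = [\mathscr{M}(X,\beta)]^{vir}$ up to the gerbe normalization. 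The factor $r^{m_{i,-}}$ enters because at a marked point of negative contact order the orbifold evaluation map lands in a twisted sector which is fibered over the stratum $D_{p_i}$ with band a product of $\mu_{r_j}$'s (one for each divisor component met with negative order), and pushing a class pulled back from $D_{p_i}$ through this gerbe contributes exactly $\prod r_j^{-1}$; at positive-contact markings there is no twisting and no such factor. Proving that $c$ is virtually birational — in particular pinning down the obstruction contribution of the root-stack twist at the nodes and at the twisted markings, and verifying that slope sensitivity is precisely what prevents the appearance of extra components or excess obstruction from curves that would otherwise be pushed into deeper strata — is the technical heart of \cite{logroot} and \cite{logroot2}, and is why one must work with the slope-sensitive model rather than with $X$ directly.

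Finally I would match the tautological insertions. The enriched logarithmic evaluation map to $D_{p_i}$ factors the orbifold evaluation to the rigidified twisted sector, so $ev_{x_i}^*([\alpha_i])$ corresponds under $c$; and the orbifold descendent $\psi$-class at a twisted marking differs from the ordinary cotangent-line $\psi$-class only by a boundary correction which, pulled back to the type-$\beta$ locus where the contact data is rigid, vanishes, so $\psi_{x_i}^{a_i}$ also corresponds under $c$. Combining the three steps via the projection formula along $c$, together with the gerbe-degree bookkeeping of the previous paragraph, yields the asserted equality of integrals.
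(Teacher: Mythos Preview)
The paper does not prove this theorem; it is quoted from \cite{logroot} and \cite{logroot2} (the text introduces it with ``We record their main result below''). So there is no proof in the paper to compare your proposal against.

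That said, your sketch is broadly in the spirit of those references but compresses away their key intermediary. The actual argument in \cite{logroot,logroot2} does not build a direct comparison $c\colon \mathscr{M}^{orb}(X_r,\beta)\to\mathscr{M}(X,\beta)$; rather it factors through the moduli of \emph{naive} log maps $\mathscr{M}^{naive}(X,\beta)$ (see the paper's Section on counterexamples, which invokes Corollary~2.10 of \cite{logroot}). One first identifies the large-$r$ orbifold theory with the naive theory, and then uses slope sensitivity to show that the naive and genuine log moduli agree (this is where the ``no extra components'' and ``no excess obstruction'' claims you gesture at are actually proved). Your direct map $c$ glosses over the fact that a generic orbifold map need not carry a genuine log enhancement, only a naive one; slope sensitivity is precisely what forces naive $=$ log, not what makes $c$ well-defined. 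Also, your treatment of $\psi$-classes is imprecise: at a $\mu_r$-twisted marking the orbifold cotangent line is $1/r$ times the coarse one, not equal up to a vanishing boundary term; the conventions in \cite{RQC_no_log} use coarse $\psi$-classes from the outset, which is why no correction appears in the final formula.
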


In the context of this paper, we will be interested in the following two corollaries:

\begin{corollary}
Given a type $\beta$ of log stable map to $X$ which is slope sensitive with only one negative contact order, then $\mathscr{M}(X,\beta)$ is virtually equidimensional. Moreover, the log Gromov-Witten and large $r$ orbifold Gromov-Witten invariants with respect to $\beta$ coincide. 
\end{corollary}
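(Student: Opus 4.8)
The plan is to derive both assertions from Theorem~\ref{neglogorb} together with the slope-sensitive construction of the virtual class recalled above. The equality of invariants is the more direct of the two: since $\beta$ is slope sensitive by hypothesis, Theorem~\ref{neglogorb} applies and identifies $\int_{[\mathscr{M}(X,\beta)]^{vir}}\prod_i\psi_{x_i}^{a_i}ev_{x_i}^*([\alpha_i])$ with the large-$r$ orbifold invariant $\langle\prod_i\psi_{x_i}^{a_i}[\alpha_i]\rangle$ of \eqref{orbinv}. The single-negative-contact-order hypothesis plays its role here only in lining this up with the orbifold theory as Tseng and You set it up: a lone negative contact order $-s$ becomes a single high-age twisted insertion on one root stack $X_r$ landing over $D_s$, so the right-hand side is exactly the kind of invariant entering the orbifold structure constants $N^{\textbf{A},orb}$ rather than a count on a more elaborate stacky target.

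For virtual equidimensionality I would argue as follows. First, slope sensitivity of $\beta$ equips $\mathfrak{M}(\mathcal{X},\beta)$ with a virtual fundamental class (following \cite{logroot2}), and virtual pullback along $\mathscr{M}(X,\beta)\to\mathfrak{M}^{ev}(\mathcal{X},\beta)$ yields the refined class $[\mathscr{M}(X,\beta)]^{vir}$, whose dimension is the single number given by \eqref{vdimorb} with $\sum_i e_i = 1$. What remains is to see that this refined class is genuinely equidimensional in the stronger sense relevant here, namely that it does not secretly amalgamate contributions of distinct dimensions attached to the various realizable tropical types $\tau$ of type $\beta$ --- recall that the alternative, tropically stratified construction produces classes of differing dimension in general. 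I would settle this by a tropical dimension count: a realizable type $\tau$ of type $\beta$ has at most one bounded leg, which contributes a single length coordinate to the moduli cone of tropical maps of type $\tau$, matched against a single linear condition, so $\dim\mathfrak{M}(\mathcal{X},\tau)$ is independent of the realizable choice of $\tau$; hence every such $\tau$ contributes a class of the dimension predicted by \eqref{vdimorb} and the refined class is their sum. Alternatively, and perhaps more cleanly, one can transfer equidimensionality from the orbifold side: by \cite{logroot2} there is a log \'etale modification $\pi\colon\tilde{X}\to X$ on which $[\mathscr{M}(X,\beta)]^{vir}$ is the virtual class of $\mathscr{M}^{orb}(\tilde{X}_r,\beta')$, a Deligne--Mumford stack carrying an equidimensional perfect obstruction theory of dimension $\dim D_s$, and then push this equidimensionality back to $X$ using the birational invariance of \cite{bir_GW} and \cite{pbirinv}.

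The main obstacle is the dimension bookkeeping underlying the equidimensionality claim: one must show that splitting, or tropically degenerating, the vertex carrying the single negative-contact leg cannot raise $\dim\mathfrak{M}(\mathcal{X},\tau)$, so that the ``one bounded leg, one coordinate, one condition'' count is exact rather than merely generic --- this is precisely the point where a second negative contact order would break the argument. If one instead follows the orbifold route, the corresponding difficulty is upgrading the identification of $[\mathscr{M}(X,\beta)]^{vir}$ with the orbifold virtual class on $\tilde{X}_r$ from an equality of numerical invariants to an equality of cycle classes; this is where slope sensitivity and the full strength of \cite{logroot2} enter, and it is what makes a single negative contact order the natural hypothesis under which to record the statement.
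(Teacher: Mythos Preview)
The paper gives no separate proof of this corollary; it is listed immediately after Theorem~\ref{neglogorb} as one of two direct consequences, with the virtual equidimensionality implicit in the slope-sensitive construction of the refined class recalled just before (so that \eqref{vdimorb} yields a single number), and the equality of invariants being a literal restatement of Theorem~\ref{neglogorb}. Your derivation of the second assertion is exactly this, and is correct.

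Your treatment of the first assertion goes well beyond what the paper does, and here a caution is in order. The sentence ``a realizable type $\tau$ of type $\beta$ has at most one bounded leg, which contributes a single length coordinate \ldots\ matched against a single linear condition'' is a heuristic rather than an argument: realizable types $\tau$ marked by $\beta$ can have arbitrarily many vertices and bounded \emph{edges}, and the dimension of the tropical moduli cone is governed by the full incidence conditions among all of these, not by the lone punctured leg in isolation. What is actually true, and what makes the corollary immediate, is that the refined virtual class of \cite{logroot2} is by construction of the single dimension \eqref{vdimorb}; the role of the single negative contact order is that the puncturing ideal on $\mathfrak{M}(\mathcal{X},\beta)$ is then locally generated by one element, so the stack is genuinely pure-dimensional and the refined class coincides with the ordinary fundamental class --- hence no tension between the two constructions of the virtual class. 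Your alternative route via the orbifold side would require upgrading Theorem~\ref{neglogorb} from an equality of numbers to an equality of cycles, which is not what the paper has available; so if you want to justify equidimensionality beyond ``the refined class has a single dimension by construction'', the cleaner statement is the one about the puncturing ideal, not a tropical leg count.
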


\begin{corollary}\label{vanishing}
Given a type $\beta$ of log stable map to $X$ which is slope sensitive and for all realizable tropical types $\tau$ marked by $\beta$ we have $[\mathscr{M}(X,\tau)]^{vir} = 0$, then $[\mathscr{M}(X,\beta)]^{vir} = 0$. 
\end{corollary}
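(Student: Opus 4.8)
The plan is to deduce Corollary \ref{vanishing} from Theorem \ref{neglogorb} together with the stratification of $\mathfrak{M}(\mathcal{X},\beta)$ by realizable tropical types. The starting observation is that the refined virtual class $[\mathscr{M}(X,\beta)]^{vir}$ is, by the slope sensitivity hypothesis and the construction recalled in the paragraph preceding \eqref{vdimorb}, obtained by virtual pullback along $\mathscr{M}(X,\beta) \rightarrow \mathfrak{M}(\mathcal{X},\beta)$ of a virtual fundamental class on $\mathfrak{M}(\mathcal{X},\beta)$; the latter stack is equidimensional precisely because of slope sensitivity, and it is stratified (locally closed) by the substacks $\mathfrak{M}(\mathcal{X},\tau)$ as $\tau$ ranges over realizable tropical types decorated by $\beta$. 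Correspondingly $\mathscr{M}(X,\beta)$ is stratified by the $\mathscr{M}(X,\tau)$.

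First I would make precise the decomposition of the refined class into tropical contributions. The key input is the decomposition formula for punctured log invariants (the analogue for punctured maps of the decomposition theorem of Abramovich--Chen--Gross--Siebert / the gluing formalism used in \cite{int_mirror} and \cite{punc}): summing over the finitely many maximal-dimensional realizable tropical types $\tau$ marked by $\beta$ that contribute in the relevant degree, one has
\[
[\mathscr{M}(X,\beta)]^{vir} \;=\; \sum_{\tau} \frac{1}{|\mathrm{Aut}(\tau)|}\, (j_\tau)_* [\mathscr{M}(X,\tau)]^{vir},
\]
where $j_\tau \colon \mathscr{M}(X,\tau) \rightarrow \mathscr{M}(X,\beta)$ is the natural map and $[\mathscr{M}(X,\tau)]^{vir}$ is the canonical class on the type-$\tau$ stratum (which exists and is equidimensional by Proposition $3.28$ of \cite{punc}). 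Granting this, the hypothesis that $[\mathscr{M}(X,\tau)]^{vir}=0$ for every realizable $\tau$ marked by $\beta$ kills each summand, hence $[\mathscr{M}(X,\beta)]^{vir}=0$.

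The main obstacle is justifying the decomposition formula itself in the punctured setting at the level of the \emph{refined} virtual class, rather than merely for numerical invariants: one must check that the slope-sensitive virtual class on $\mathfrak{M}(\mathcal{X},\beta)$ restricts, stratum by stratum, to the canonical class on $\mathfrak{M}(\mathcal{X},\tau)$ (up to the expected multiplicity), and that virtual pullback along $\mathscr{M}(X,\beta)\rightarrow \mathfrak{M}(\mathcal{X},\beta)$ commutes with this restriction. This is where slope sensitivity is used essentially — it is the condition guaranteeing $\mathfrak{M}(\mathcal{X},\beta)$ is equidimensional so that the decomposition into top-dimensional strata is clean — and where one should invoke the compatibility of virtual pullback with proper pushforward and with refined Gysin maps in the formalism of \cite{vpull}. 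An alternative, and perhaps cleaner, route that sidesteps reproving a decomposition formula: observe that the class $[\mathscr{M}(X,\beta)]^{vir}$ is determined by its images under proper pushforward to the strata, i.e. it suffices to test against the tropical strata, and Theorem \ref{neglogorb} (applied type by type) identifies each tropical piece with an orbifold invariant on a suitable log étale modification; since the hypothesis forces every such piece to vanish, so does the total class. Either way, the content is entirely in the stratification/decomposition bookkeeping, and the vanishing is then immediate.
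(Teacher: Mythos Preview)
Your main approach---decompose $[\mathfrak{M}(\mathcal{X},\beta)]$ along its stratification by realizable tropical types and virtually pull back---is the intended one; the paper states the corollary without proof, treating it as immediate from the construction of the refined class in \cite{logroot2} together with the stratification of $\mathfrak{M}(\mathcal{X},\beta)_{red}$ by the $\mathfrak{M}(\mathcal{X},\tau)$ (cf.\ Remark~\ref{idloget} and the decomposition used explicitly in the proof of Proposition~\ref{altstconst}). Two small corrections: your decomposition formula is missing the multiplicities $m_\tau$ that appear in the actual stratum decomposition (see the display in the proof of Proposition~\ref{altstconst}), though this is harmless for the vanishing conclusion; and your ``alternative route'' paragraph is muddled---Theorem~\ref{neglogorb} as stated is an equality of \emph{numerical} invariants, so it does not directly let you ``test the virtual class against strata,'' and you should drop that paragraph rather than lean on it.
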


Before proceeding, we record the following relation between curve classes and contact orders, which holds in both the settings of log Gromov-Witten theory and orbifold Gromov-Witten theory, which follows from the weak balancing conditions in both settings as well as Remark $2.11(2)$ of \cite{int_mirror}:

\begin{proposition}\label{balancing}
Suppose we have a type $\beta = (\textbf{A},(s_i))$, hence $\textbf{A} \in NE(X)$ and $s_i \in \sigma_{i,\NN}^{gp}$ for cones $\sigma_i \in \Sigma(X)$. Letting $\mathscr{M}(X,\beta)$ refer to either the corresponding space of log or orbifold stable maps, then $\mathscr{M}(X,\beta) \not= \emptyset$ only if $D_j \cdot \textbf{A} = \sum_i D_j^*(s_i)$ for all irreducible components $D_j \subset D$. In particular, if all but one contact order is fixed except for a final contact order $s_l$, then there are only finitely many possible choices for a contact order $s_l$. 

\end{proposition}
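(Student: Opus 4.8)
The plan is to deduce the stated relation directly from the weak balancing condition in each theory, together with the observation recorded in \cite{int_mirror} Remark $2.11(2)$ relating intersection with a boundary divisor $D_j$ to the associated PL function $D_j^*$ on $\Sigma(X)$. First I would set up the tropicalization of a stable (log or orbifold) map of type $\beta$: over the generic point of the base, a point of $\mathscr{M}(X,\beta)$ gives a genus $0$ tropical map $\Sigma(f)\colon G \to \Sigma(X)$ with legs carrying the prescribed contact vectors $s_i$. Restricting attention to a single irreducible component $D_j$, composing with the PL function $D_j^*\colon \Sigma(X) \to \mathbb{R}_{\ge 0}$ yields an affine (piecewise-linear) function on $G$ whose slope along the $i$-th leg is exactly $D_j^*(s_i)$ by construction of the contact order. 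The balancing condition at each vertex $v$ says that the outgoing primitive direction vectors along the edges and legs at $v$, weighted appropriately, sum to zero in $\pmb\sigma(v)^{gp}_\NN$; applying the linear functional induced by $D_j^*$ and summing this vanishing identity over all vertices of $G$, the internal edge contributions cancel in pairs (each internal edge appears with opposite signs at its two endpoints), leaving $\sum_i D_j^*(s_i) = 0$ against a term accounting for the degree against $D_j$.

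The second step is to identify that remaining term with $D_j \cdot \textbf{A}$. This is precisely the content of \cite{int_mirror} Remark $2.11(2)$: the total ``bending'' of the tropical map as measured against the divisor $D_j$, i.e.\ the sum over vertices of the defect in $D_j^* \circ \Sigma(f)$ being affine, equals the intersection number $D_j \cdot \textbf{A}$ where $\textbf{A}$ is the curve class decorating $\beta$. Concretely, $D_j^* \circ \Sigma(f)$ fails to be globally affine exactly at vertices mapping into the locus where $D_j^*$ is non-smooth, and the accumulated discrepancy is the degree of the pullback line bundle, hence $D_j \cdot \textbf{A}$. Combining with the previous paragraph gives $D_j \cdot \textbf{A} = \sum_i D_j^*(s_i)$ as a necessary condition for a stable map of type $\beta$ to exist, i.e.\ for $\mathscr{M}(X,\beta) \neq \emptyset$. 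I would remark that the identical argument applies verbatim in the orbifold setting, since the orbifold stable maps to root stacks $X_r$ also satisfy a weak balancing condition with the age/negative contact data encoded in the same tropical fashion (this is exactly the compatibility exploited in \cite{logroot}, \cite{logroot2}).

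For the final sentence, I would argue as follows: suppose all contact orders $s_1,\ldots,s_{l-1}$ are fixed and only $s_l$ is allowed to vary, with $s_l \in \sigma_{l,\NN}^{gp}$ for some cone $\sigma_l \in \Sigma(X)$. The relation from the first part, applied to every irreducible component $D_j \subset D$, gives $D_j^*(s_l) = D_j \cdot \textbf{A} - \sum_{i<l} D_j^*(s_i)$, a fixed integer for each $j$. Since the PL functions $D_1^*, \ldots, D_k^*$ separate the cones of $\Sigma(X)$ and, restricted to a fixed cone $\sigma_l$, assemble into an injective linear map $\sigma_{l,\NN}^{gp} \hookrightarrow \mathbb{Z}^k$ (the divisors $D_j$ meeting the stratum give local coordinates there), prescribing all the values $D_j^*(s_l)$ pins down $s_l$ uniquely within $\sigma_l$. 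As there are only finitely many cones $\sigma_l \in \Sigma(X)$, there are only finitely many possibilities for $s_l$, as claimed.

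\textbf{Main obstacle.} The delicate point is making precise that the remaining ``degree'' term in the summed balancing identity really is $D_j \cdot \textbf{A}$ rather than merely some combinatorial quantity; this requires care about the conventions for contact orders (sign conventions for negative/punctured contact, and the normalization of $D_j^*$), and it is where I would lean most heavily on \cite{int_mirror} Remark $2.11(2)$ and the balancing statements in \cite{punc} and \cite{logroot2} rather than re-deriving from scratch. A secondary subtlety is that $\Sigma(X)$ need not be a classical fan but a cone complex (possibly with self-gluings), so the linear-algebra step identifying $s_l$ from its images under the $D_j^*$ must be phrased cone-by-cone, using that on each cone the boundary divisors through the corresponding stratum furnish an injective system of linear functionals.
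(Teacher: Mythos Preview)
Your approach is the same as the paper's, which does not give a detailed proof but simply records that the proposition follows from the weak balancing conditions and \cite{int_mirror} Remark~2.11(2); your proposal fleshes out exactly that argument, and your finiteness deduction is correct and more explicit than what the paper writes.

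One point to tighten: your first paragraph asserts strict balancing at each vertex (sum of outgoing slopes equals zero in $\pmb\sigma(v)^{gp}_\NN$), but this is not what holds for log or orbifold maps to a general target. For instance, a line in $\mathbb{P}^2$ relative to a smooth cubic $D$ has a single tropical vertex at the origin with three legs of slope $+1$, summing to $3 = D\cdot L$, not zero. The correct vertex-wise identity is $\sum_{e\ni v} D_j^*(u_e) = \textbf{A}_v \cdot D_j$ with $\textbf{A}_v$ the curve class decorating $v$; summing over all vertices then gives $\sum_i D_j^*(s_i) = \textbf{A}\cdot D_j$ directly after the internal-edge cancellation. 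Your second paragraph implicitly recovers this by speaking of the ``defect'' or ``bending'' equaling the degree, and your Main Obstacle section correctly flags this as the step where Remark~2.11(2) of \cite{int_mirror} does the work. So the issue is an inconsistency between your two paragraphs rather than a genuine gap: drop the strict-balancing claim and state the vertexwise degree identity from the outset.
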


\section{Counterexample to equality of theta basis}

%Thought for counterexample to equality of mirror, blowup P^2 at 4 points, 3 points on D_1, and a point on D_3. Compute tr(\vartheta_{D_2}\vartheta_{D_3})[z^{L-3E_1+2(L-E_2)}]

In order to highlight how these two constructions differ, we consider the following simple example: 

Let $(X_t,D_t)$ the the log Calabi-Yau pair of $\mathbb{P}^2$ and the toric boundary, $D_t = D_{t,1} + D_{t,2} + D_{t,3}$ and $X = Bl_{x} \mathbb{P}^2$, with $x \in D_{t,1}$ a point which is not a zero strata of $(X_t,D_t)$. Finally, let $D$ be the strict transform of $D_t$. Then $(X,D)$ is a log Calabi-Yau pair satisfying $K_X + D = 0$ and tropicalization $\Sigma(X)$ which can be identified with $\Sigma(X_t)$. Hence, both $R_{(X,D)}$ and $R_{(X,D)}^{orb}$ exist, and are algebras over $S_X = \kk[NE(X)]$ which are free as $S_X$ modules with basis integral points of $\Sigma(X)$. Moreover, $D$ is an ample divisor, hence $X\setminus D$ is affine. We present two phenomenon which obstruct the equality of structure constants without passing to appropriate blowups:

% and for now further suppose that this isomorphism respects the canonical basis of theta functions, i.e. $f(\vartheta_{p_i}^{orb}) = \vartheta_{p_i}$ for all $p_i \in B(\ZZ)$. Letting $L,E \in NE(X)$ be the class of a generic line and the exceptional curve class respectively, $\textbf{A} = 3L - 2E$, $p_1 = D_3 + D_1$ and $p_2 = 3D_2 + 2D_3$, we will be interested in the $\vartheta_0$ term of $\vartheta_{p_1}\vartheta_{p_2}$ of class $\textbf{A} \in NE(X)$, i.e. $N_{p_1,p_2,0}^{\textbf{A}}$. 

\textbf{Case 1: Non-equality at the level of moduli stacks}

Let $L,E \in NE(X)$ be the class of a generic line and the exceptional curve class respectively, $\textbf{A} = 3L - 2E$, $p_1 = D_3 + D_1$ and $p_2 = 3D_2 + 2D_3$. We wish to compute the $\vartheta_0$ term of $\vartheta_{p_1}\vartheta_{p_2}$ of class $\textbf{A} \in NE(X)$, i.e. $N_{p_1,p_2,0}^{\textbf{A}}$ and $N_{p_1,p_2,0}^{\textbf{A},orb}$.

To compute this invariant in the log setting, we recall from \cite{GHS} Theorem $3.24$ that $\vartheta_{p_1}\vartheta_{p_2}[\vartheta_0]$ can be computed by summing over all pairs of broken lines in the canonical wall structure associated with $(X,D)$ with initial slopes given by $p_1$ and $p_2$ which meet at a fixed generic point of $\Sigma(X)$ with final slopes which sum to zero. In this setting, the number of broken lines with fixed initial slope which end at a fixed generic point is finite. In particular, if we pick our generic point in the cone of $\Sigma(X)$ spanned by $D_1$ and $D_3$, then there is a unique broken line associated with $p_1$, and two broken lines associated with $p_2$, $\beta_1$ and $\beta_2$, see the diagram below for an illustration of these broken lines, together with labeling by the slope of their final linear segments. However, neither of the two final slopes of $\beta_i$ is $-p_1$. Since $p_1$ is the slope of the unique linear segment associated with the unique broken line with initial slope $p_1$, it follows that $N_{p_1,p_2,0}^{\textbf{A}} = 0$.

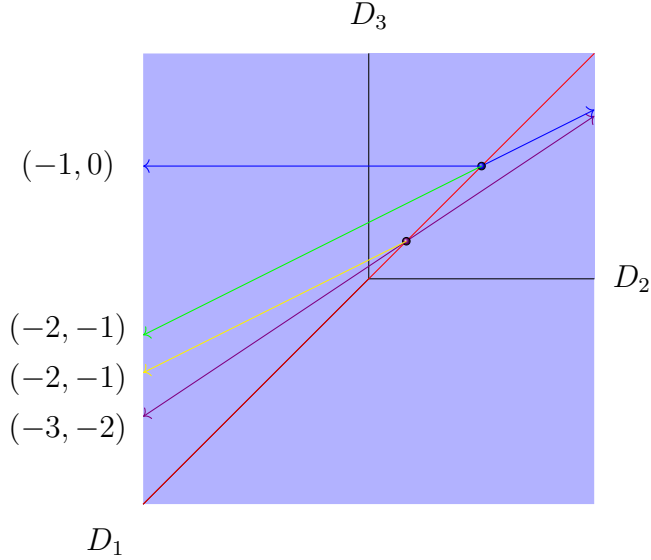
\begin{figure}[h]
\centering
\begin{tikzpicture}
\fill[white!70!blue, path fading = north ] (-6,0)--(-3,0)--(-3,3)--(-6,3)--cycle;
\fill[white!70!blue, path fading = south ] (-6,0)--(-3,0)--(-3,-3)--(-9,-3)--cycle;
\fill[white!70!blue, path fading = north] (-6,0)--(-9,-3)--(-9,3)--(-6,3)--cycle;
\draw[black] (-6,0)--(-6,3);
%\draw[ball color = black] (-6,1) circle (0.5mm);
\node at (-6,3.5) {$D_3$};
\draw[black] (-6,0)--(-3,0);
%\draw[ball color = black] (-5,0) circle (0.5mm);
\node at (-2.5,0) {$D_2$};
\draw[black] (-6,0)--(-9,-3);
%\draw[ball color = black] (-7,-1) circle (0.5mm);
\node at (-9.5,-3.5) {$D_1$};
\draw[red] (-3,3)--(-9,-3);
\draw[->,color = violet] (-5.5,.5)--(-3,13/6);
\draw[ball color = violet] (-5.5,.5) circle (0.5mm);
\draw[->, color = violet] (-5.5,.5)--(-9,-11/6);
\node at (-10,-4/3) {$(-2,-1)$};
\draw[->,color = yellow] (-5.5,.5)--(-9,-5/4);
\node at (-10,-2) {$(-3,-2)$};
\draw[->,color = blue] (-4.5,1.5)--(-3,2.25);
\draw[ball color = blue] (-4.5,1.5) circle (0.5mm);
\draw[->,color = blue] (-4.5,1.5)--(-9,1.5);
\node at (-10,1.5) {$(-1,0)$};
\draw[->,color = green] (-4.5,1.5)--(-9,-.75);
\node at (-10,-2/3) {$(-2,-1)$};
\end{tikzpicture}
\caption{The broken lines contributing to $\vartheta_{p_2}$ and $\vartheta_{p}$, which all have a unique bending vertex, colored violet if it contributes to $\vartheta_{p_2}$ and colored blue if it contributes to $\vartheta_{p}$. We color the unique ray in the scattering diagram red, and final rays of the broken lines by blue, violet, yellow and green in order to differentiate.} 
\end{figure}

Before moving on to computing the corresponding orbifold invariant, we also note the following broken line expansion for $\vartheta_p$, with $p = 2D_2 + D_3$. 

\[\vartheta_p = x^{-D_1 - D_3}z^{2L - E} + x^{-2D_1 - D_3}z^{2L}\]

In particular, $\vartheta_{p_1}\vartheta_p[\vartheta_0] = z^{2L - E}$. By the main theorem of \cite{mirrcomp}, this implies there is a unique log curve with $3$ marked points of contact orders $p_1,p$ and $0$ of curve class $2L-E$ which maps the contact order $0$ point to a general point of $X\setminus D$.

To investigate the corresponding structure constant of $R_{(X,D)}^{orb}$, we note that by Corollary $2.10$ of \cite{logroot}, it is equivalent to consider an analogous integral on the corresponding moduli space of naive log maps $\mathscr{M}^{naive}(X,\beta)$. By the main theorem of \cite{mirrcomp}, the contribution to the invariant of interest coming from the locus of curves with log enhancements vanishes. Hence, any non-zero contribution must come from the locus of curves which do not admit log enhancements. To produce one such curve, we take the unique curve $C_1$ which contributes to $N_{p_1,p}^{2L-E}$ above, glue a chain of rational curves $C_0\cup C_2$ to the point of $C_1$ associated with the contact order $p$ point, and equip $C_0$ with an extra marked point. Calling the resulting curve $C$, we extend $f|_{C_1}: C_1\rightarrow X$ to a map $f: C \rightarrow X$ by contracting $C_0$, and letting $C_2$ map isomorphically onto the strict transform of the line between $x$ and the zero stratum $D_2\cap D_3$. 

There is a unique way of extending this map to naive log map with contact orders $p_1$ and $p_2$ and $0$ at marked points, but no logarithmic enhancement exists. Indeed, the obstruction to the log enhancement can be seen tropically; the tropical curve would have three vertices, two of which map to the zero cone and are connected to the third vertex $v_0$ lying in the cone spanned by $D_2$ and $D_3$ by a distinct edge. The slopes of these edges are given by $2D_2 + D_3$ and $D_2 + D_3$, hence cannot intersect in the interior of $\pmb\sigma(v_0)$, which obstructs the required tropical curve from existing. On the other hand, using the fact that both the linear functions $D_3^*$ and $D_2^*$ do not restrict to zero on each edge, it is straightforward to show that a naive log enhancement exists, see \cite{logroot} Example $3.10$ for a similar example, with Example $3.13$ of loc. cit. explaining the obstruction to a log enhacement.

 By recalling no irreducible curve satisfies the required constraints, combined with considering the restriction on combinatorial types imposed by the curve class and contact orders of marked points, one can show that all curves contributing to the naive invariant have combinatorial type as above. Using the fact that the conic is isolated among solutions to the point constraint in its relevant moduli space, it is straightforward to show that $C$ is the unique curve potentially contributing to the invariant. 
% This moduli stack Along with a component associated with log stable maps, this moduli space has an additional component whose general point corresponds to a nodal quartics $C = C_1\cup C_2 \cup C_0$ with $f(C_1)$ a curve in the linear system $|2L - 2E_1|$, $f(C_2)$ a curve in the linear system $|2L - 2E_1 - E_2|$ and $C_0$ contracted to the zero stratum $D_2\cap D_3$. That these curves have naive log enhancements follows from example (....) from \cite{logroot1} The dimension of this component is $2$ dimensional, with modulus given by the image of $x_{out}$. By considering the corresponding linear system on $\mathbb{P}^2$, it is easy to see that a generic point $p \in U$ has a unique map $f$ such that $f(x_{out}) = p$. Moreover, this curve certainly does not have a log enhancement. Moreover, this curve is isolated

To prove that $C$ contributes to the orbifold trace, we need to show the curve is in addition unobstructed. By the description of the obstruction theory for naive log maps described in Section $2.2$ of \cite{logroot}, it suffices to show that $\mathfrak{M}^{naive}(\mathcal{X},\beta)$ is locally smooth and dimension $0$ around the image of $[C] \in \mathfrak{M}^{naive}(\mathcal{X},\beta)(\kk)$ and $H^1(C,f^*T_X^{log}) = 0$. If these conditions hold, then locally around $[C]$, $\mathscr{M}^{naive}(X,\beta)$ is smooth and $2$ dimensional by Riemann-Roch. Since $2$ is the expected dimension of the moduli space, the map $C \rightarrow X$ would be unobstructed.

 For the first property, let $\beta_i$ be the tropical type of log stable map to $(X,D_i)$ induced by the naive tropical type $\beta$. By the classification of strata of $\mathfrak{M}^{naive}(\mathcal{X},\beta)$ as fiber products of locally closed strata of maps to $\mathfrak{M}(\mathcal{A}_{\NN},\beta_i)$ over the moduli stack of prestable curves, we derive the stratum of interest which contains $[C]$ is smooth and dimension $0$. Moreover, since the nodes of $\beta_i$ must smooth simultaneously for $i=2,3$, the only possible distinct stratum whose closure contains the stratum of interest is the non-degenerate stratum . However, since no curve in the stratum of interest has a log enhancement, we conclude that the stratum is open. Since the virtual dimension of $\mathfrak{M}^{naive}(\mathcal{X},\beta)$ is $0$, we conclude that $\mathfrak{M}^{naive}(\mathcal{X},\beta)$ is smooth and of dimension $0$ locally around the image of $[C]$. 

Turning now to the computation of $H^1(C_i,f^*|_{C_i}T_X^{log})$, since $C_0$ is contracted, clearly $f|_{C_0}^*T_X^{log} = \mathcal{O}_{C_0}^2$. Moreover, since $C_1$ contributes to the naive curve count $N_{p_1,p}^{2L-E} = \eta(p_1,p,2L-E)$ from \cite{archmirror} Definition $1.1$, by loc. cit. Proposition $3.13$, we have $f^*|{C_1}T_X^{log} = \mathcal{O}_{C_1}^2$. For $C_2$, note that since $c_1(T_X^{log})\cdot(L-E) = 0$, Riemann-Roch gives 
\[dim_{\kk}\text{ }H^0(f_{C_2}^*T_X^{log}) - dim_{\kk}\text{ }H^1(f_{C_2}^*T_X^{log}) = 2.\]
Since the only deformations of the log map come from infinitesimal automorphisms of the domain curve which fix the unique marked point, we have $H^0(f_{C_2}^*T_X^{log}) = \kk^2$, implying $H^1(f_{C_2}^*T_X^{log})=0 $

%Since $C_2$, thought of as a log curve, is rigid, we have $H^0(f_{C_2}^*(T_X^{log}) = 0$. Moreover, we have $f^*N_D = \mathcal{O}_{C_2}(2)$ and $c_1(T_X)\cdot (L-E) = 2$. By the vanishing of $H^1(f^*T_X)$ and Riemann-Roch, we have H^0(T_X) =  $C_2$ has one marked point, $T_{C_2}^{log} = \mathcal{O}_{C_2}(1)$. Since $c_1(T_X^{log})\cdot (L-E) = 0$, and $N^{log}_{C_2/X}$ is a line bundle, we must have $N^{log}_{C_2/X} = \mathcal{O}_{C_2}(-1)$. By taking the long exact sequence in cohomology for the above sequence, and recalling $H^1(\mathbb{P}^1,\mathcal{O}(-1)) = 0$, we derive $H^1(f^*|_{C_2}T_X^{log}) = 0$.

With these facts in mind, we now consider the following normalization sequence:

\[0 \rightarrow f^*(T_X^{log}) \rightarrow \oplus_i f|_{C_i}^*(T_X^{log}) \rightarrow \kk_{e_1}^2 \oplus \kk_{e_2}^2\rightarrow 0\]
As usual, the middle term above is considered a sheaf on $C$ via pushforward, and $e_i$ refer to the nodes of $C$. Taking the long exact sequence in cohomology gives the sequence:

\[H^0(\oplus_i f|_{C_i}^*(T_X^{log})) \rightarrow \kk_{e_1}^2\oplus \kk_{e_2}^2 \rightarrow H^1(f^*(T_X^{log})) \rightarrow H^1(\oplus_i f|_{C_i}^*(T_X^{log}))\]
By our previous computation of $f|_{C_i}^*(T_X^{log})$ for each irreducible component $C_i$, the rightmost term vanishes. Hence, to show $H^1(f^*(T_X^{log})) = 0$, it suffices to show that the left most arrow in the sequence above is surjective. 

To do this, we consider the image of each component of $\oplus_i f|_{C_i}^*(T_X^{log})$ in $\kk_{e_1}^2\oplus \kk_{e_2}^2$ for $i = 0,1$. First, since $f|_{C_0}^*T_X^{log} = \mathcal{O}^2_{C_0}$, we have the restriction map is given by the diagonal $\Delta: \kk^2 \rightarrow \kk_{e_1}^2\oplus\kk_{e_2}^2$. For $C_1$, since $f^*|_{C_1}T_X^{log}$ is the trivial bundle, it is in particular globally generated. Hence, the restriction map to $\kk_{e_1}^2$ is surjective. Since the intersection of the images of these linear maps has trivial intersection, it follows that the sum of the two maps is surjective, and hence the leftmost morphism in the exact sequence above is surjective as required. 

\textbf{Case $2$: Non-equality due to refined virtual class}:

In this second example, we consider $p= D_2 + D_3$, and we are interested in the structure constant $(\vartheta_{p}^{orb})^2[\vartheta_{p}^{orb}z^{L-E}]$. The intrinsic mirror structure constant is easily computed to vanish, once again using a broken line argument.

 To compute the orbifold structure constant, we again use a naive log/orbifold correspondence, this time coming from Theorem $5.1$ of \cite{logroot2} to account for the high age/negative contact orders which must be present. In the naive log setting, letting $\beta$ be the combinatorial type of twisted map contributing to the invariant of interest, it is straightforward to see that the domain of such maps consists of two components $C_1\cup C_2$ joined along a node $q$ with $C_2$ containing three special points, with the map mapping $C_1$ isomorphically onto the strict transform of the line considered in case $1$ and $C_2$ contracted. Each such stable map admits a naive log enhacement of the correct type. In fact, unlike in case $1$, each admits a logarithmic enhacement. A map of this type is thus determined by a choice of cross-ratio of the $4$ special points on $C_2$, hence as stacks, we have $\mathscr{M}^{naive}(X,\beta) \cong \overline{\mathscr{M}}_{0,4} \cong \mathbb{P}^1$. However, by equation \ref{vdimorb}, we have $vdim\text{ }\mathscr{M}^{naive}(X,\beta) = 0$. By the naive log/orbifold correspondence, the structure constant in question can be computed as the degree of this virtual cycle. 

To compute the virtual cycle, observe that the map $\mathscr{M}^{naive}(X,\beta) \rightarrow \mathfrak{M}^{naive}(\mathcal{X},\beta)$ has image entirely contained in a stratum associated with the realizable tropical type $\tau$ described above. By essentially the same calculation which appears in example $1.12$ of \cite{logroot2}, we have 
\[[\mathscr{M}^{naive}(X,\beta)]^{ref} = [\mathscr{M}^{naive}(X,\beta)]\cap (-\psi_{q \in C_1} - \psi_{q \in C_2})\]
where the cotangent lines above are associated with the two flags of $G_{\tau}$. The component of the universal family over $\mathscr{M}^{naive}(X,\beta)$ associated with $C_1$ is a trivial family of once-marked genus $0$ curves, hence $[\mathscr{M}^{naive}(X,\beta)]\cap (-\psi_{q \in C_1})= 0$. On the other hand, under the isomorphism $\mathscr{M}^{naive}(X,\beta) \cong \overline{\mathscr{M}}_{0,4}$, $\psi_{q \in C_2}$ is the pullback of the associated cotangent line on $\overline{\mathscr{M}}_{0,4}$. Hence, $[\mathscr{M}^{naive}(X,\beta)]\cap (-\psi_{q \in C_2})= -1$. In summary, $(\vartheta_{p}^{orb})^2[\vartheta_{p}^{orb}z^{L-E}] = -1$. We contrast this calculation with the fact that all logarithmic structure constants in this setting are non-negative.

We note however that we nonetheless have a $\kk[NE(X)]$ algebra isomorphism $f\colon R_{(X,D)}^{orb} \rightarrow R_{(X,D)}$, with $f(\vartheta_{D_i}^{orb}) = \vartheta_{D_i}$ for all $i$. Indeed, by \cite{int_mirror} Example $1.21$, we have the presentation:

\[R_{(X,D)} \cong \kk[NE(X)][\vartheta_{D_1},\vartheta_{D_2},\vartheta_{D_3}]/(\vartheta_{D_1}\vartheta_{D_2}\vartheta_{D_3} - z^L - \vartheta_{D_1}z^{L-E})\]
In order to show that the assignment $f(\vartheta_{D_i}^{orb}) = \vartheta_{D_i}$ extends to an isomorphism, just as in the calculation of $R_{(X,D)}$ from loc. cit. it suffices to compute $\vartheta^{orb}_{D_1}\vartheta^{orb}_{D_2}\vartheta^{orb}_{D_3}$. The problems highlighted above in the relevant log and orbifold invariants do not occur for calculating the relevant naive/orbifold invariants, and we derive the analogous expression for the triple product. Hence $R_{(X,D)}^{orb}$ has exactly the same presentation as $R_{(X,D)}$. In particular, in this example, the log and orbifold bases can be expressed as $\kk[NE(X)]$ linear combinations of each other.

\section{Log Gromov-Witten theory of root stacks}

At various stages of the arguments in this paper, we will use a generalization of the main result from \cite{pbirinv} to the setting of log \'etale modifications given by root stacks. A technical issue occurs when considering this situation as the theory of log stable maps to a log DM stack has not yet been fully developed. However, when the orbifold structure arises via a log \'etale modfication of a log smooth scheme, it is relatively straightforward to construct the relevant virtually smooth moduli stacks which satisfy the required properties. This has been constructed by Battistella, Naibjou and Ranganathan in \cite{logroot2} assuming only specified global contact orders. In this section, we will provide the construction when we assume the tropical types are realizable, from which the necessary generalization of the main result of \cite{pbirinv} will follow in a straightforward manner. For future purposes, we will work in a slightly more general context than is strictly necessary for the purpose of this paper.

Suppose we have a log smooth morphism $X \rightarrow B$ with $B$ log smooth, and a morphism $\mathcal{A}_X \rightarrow \mathcal{A}_B$ making the obvious square commute. We define the relative Artin fan $\mathcal{X}:= B\times_{\mathcal{A}_B}\mathcal{A}_X$.  Suppose in addition we have a root stack $\tilde{X} \rightarrow X$ realized as a log \'etale modification. To construct a moduli space of twisted log stable maps to $\tilde{X}$ over $B$, we observe that we can construct the root stack $\tilde{X}$ as a fiber product:

\[\begin{tikzcd}
\tilde{X} \arrow{r} \arrow{d} & \tilde{\mathcal{X}} \arrow{d}\arrow{r} & \mathcal{A}_{\tilde{X}}\arrow{d} \\
X \arrow{r} \arrow{rd} & \mathcal{X} \arrow{r}\arrow{d} & \mathcal{A}_X\arrow{d}\\
&B \arrow{r} &\mathcal{A}_B
\end{tikzcd}.\]
The bottom commuting square is the cartesian square defining $\mathcal{X}$ and the top right vertical morphism is a log \'etale modification induced by a lattice refinement. More precisely, the morphism $\mathcal{A}_{\tilde{X}} \rightarrow \mathcal{A}_X$ is \'etale locally on $\mathcal{X}$ induced by a lattice refinement of Artin cones $\mathcal{A}_{\tilde{\sigma}} \rightarrow \mathcal{A}_\sigma$. Letting $\Gamma_\tau \rightarrow \Sigma(X)$ be a family of punctured tropical maps, we form the fiber product:

\[\begin{tikzcd}
\tilde{\Gamma}_\tau^\circ \arrow{r}\arrow{d} & \Sigma(\tilde{X}) \arrow{d}\\
\Gamma_\tau^\circ \arrow{r} & \Sigma(X)
\end{tikzcd}.\]

The induced morphism $\tilde{\Gamma}_\tau \rightarrow \Gamma_\tau$ is a lattice refinement of cone complexes. By pushing forward the refinement to $\tau$, we produce a cone $\gamma$. Moreover since the all the vertical maps on cones are induced by lattice refinements, we have $\gamma\otimes \mathbb{Q} \cong \tau \otimes \mathbb{Q}$. After pulling back $\tilde{\Gamma}_\tau \rightarrow \tau$ along $\gamma \rightarrow \tau$, by the same proof of Proposition $4.1$ of \cite{pbirinv}, we produce a family of punctured tropical maps $\tilde{\Gamma}_\gamma \rightarrow \Sigma(\tilde{X})$. Moreover, the previous family of maps is universal in the sense that any family of maps $\Gamma'_\omega \rightarrow \Sigma(X)$ induced by pulling back $\Gamma_\tau$ along a map $\omega \rightarrow \tau$ which factors through the refinement $\Sigma(\tilde{X}) \rightarrow \Sigma(X)$ is induced by a unique morphism $\omega \rightarrow \gamma$ which factors $\omega \rightarrow \tau$. Using this fact, it is straightforward that refinements $\gamma \rightarrow \tau$ glue to a determine a log \'etale modification of $\mathcal{A}_{\mathfrak{M}(\mathcal{X})}$. We pullback this modification along the strict morphism $\mathfrak{M}(\mathcal{X},\tau) \rightarrow \mathcal{A}_{\mathfrak{M}(\mathcal{X})}$ to produce a log stack $\mathfrak{M}_{\tilde{\mathcal{X}},\tau'} $. Pulling $\mathfrak{M}_{\tilde{\mathcal{X}},\tilde{\tau}} \rightarrow \mathfrak{M}(\mathcal{X},\tau)$ along the strict map $\mathscr{M}(X,\tau) \rightarrow \mathfrak{M}(\mathcal{X},\tau)$ produces a log DM stack $\mathscr{M}_{\tilde{X},\tilde{\tau}}$, which we record in the fiber diagram below:

\begin{equation}\label{twistuniprop}
\begin{tikzcd}
\mathscr{M}_{\tilde{X},\tilde{\tau}} \arrow{r} \arrow{d} & \mathfrak{M}_{\tilde{\mathcal{X}},\tilde{\tau}} \arrow{d}\\
\mathscr{M}(X,\tau) \arrow{r} & \mathfrak{M}(\mathcal{X},\tau).
\end{tikzcd}
\end{equation}

We claim that the stack $\mathscr{M}_{\tilde{X},\tau'}$ represents a moduli stack of twisted punctured log maps to $\tilde{X}$, a statement which we will clarify shortly. Before this however, we recall a collection of definitions and results from \cite{ltcur}.

\begin{definition}
\begin{enumerate}
\item A morphism $\mathcal{M}\rightarrow \mathcal{N}$ of locally free log structures on a DM stack $X$ is called \emph{simple} if for every geometric point $p \rightarrow X$, the monoids $\overline{\mathcal{M}}_p$ and $\mathcal{\mathcal{N}}_p$ have the same rank, the morphism $\phi: \overline{\mathcal{M}}_p \rightarrow \overline{\mathcal{N}}_p$ is injective, and for every irreducible element $f \in \overline{\mathcal{N}}_p$ there exists an irreducible element $g \in \overline{\mathcal{M}}_p$ and a positive integer $n$ such that $\phi(g) = nf$.

\item A log twisted curve is the data $(C/S,\sigma_i,a_i,\mathcal{M}_S^{min} \rightarrow \mathcal{M}_S)$ with $C/S$ a stable curve, $\sigma_i: S \rightarrow C$ sections of the curve, $a_i$ positive integers indexed by the sections, and $\mathcal{M}_S^{min} \rightarrow \mathcal{M}_S$ a simple morphism of fine and saturated log structures on $S$ with $\mathcal{M}^{min}_S$ the minimal log structure induced by the classifying morphism $S \rightarrow \overline{\mathscr{M}}_{g,n}$.

\item A twisted log curve is a prestable twisted curve $\mathcal{C}/S$ with log structures on $\mathcal{C}$ and $S$ such that $\mathcal{C} \rightarrow S$ is log smooth. 
\end{enumerate}
\end{definition}

Note that the tropicalization of a simple log morphism $(S,\mathcal{N}_S) \rightarrow (S,\mathcal{M}_S)$ is a morphism of cone complexes which upon restriction to cones of the domain are given by a lattice refinements. We call the lattice refinements arising in this way simple lattice refinements. Both twisted curves and the enriched curves defined above over some base scheme $S$ are objects in naturally defined groupoids. Theorem $1.9$ of \cite{ltcur} states the following:

\begin{proposition}\label{logtw}
For any scheme $S$, there is a natural equivalence of groupoids between the groupoid of $n$-pointed twisted curves over $S$ and the groupoid of log twisted $n$-pointed curves over $S$. Moreover, this equivalence is compatible with base change $S' \rightarrow S$.
\end{proposition}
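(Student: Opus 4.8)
The plan is to exhibit an explicit pair of quasi-inverse functors between the two groupoids, and then to observe that every construction involved commutes with pullback. Since both groupoids are étale stacks on $S$, the whole statement is local on $S$, and after passing to an étale cover we may further assume that the coarse curve $C \to S$ has a single fibrewise node (or a single marked section), so that it suffices to match the standard local models of twisted curves against the corresponding local models of log structures, as developed in \cite{ltcur}.

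For the forward functor $F$, let $\mathcal{C}/S$ be an $n$-pointed twisted curve. Its coarse space $C \to S$ is a stable $n$-pointed curve, the $\sigma_i$ are the images of the marking gerbes, and $C$ carries the minimal log structure $\mathcal{M}_S^{min}$ pulled back along the classifying map $S \to \overline{\mathscr{M}}_{g,n}$; étale-locally $\overline{\mathcal{M}}_S^{min} \cong \mathbb{N}^{E}$ with $E$ indexing the fibrewise nodes. By the local structure theory of twisted curves, étale-locally near $\sigma_i$ the stack $\mathcal{C}$ is the $a_i$-th root of $C$ along $\sigma_i$, and near a node $e$ it is the balanced $r_e$-th root of the nodal curve, for well-defined positive integers $r_e$; on coarse spaces the two branch parameters $x,y$ and the smoothing parameter $\delta_e$ satisfy $\overline{x}+\overline{y}=\delta_e$, while $\mathcal{C}$ sees $\delta_e^{1/r_e}$. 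I would then define $F(\mathcal{C}/S)=(C/S,\sigma_i,a_i,\mathcal{M}_S^{min}\hookrightarrow \mathcal{M}_S)$, where $\mathcal{M}_S$ has ghost sheaf obtained from $\overline{\mathcal{M}}_S^{min}$ by replacing the generator $\delta_e$ with $\tfrac{1}{r_e}\delta_e$ for each $e$; this morphism is simple by construction. The point to check is that the local recipe glues: the integer $r_e$ is intrinsic to $\mathcal{C}$ (it is the order of the automorphism group of the twisted node, equivalently the ramification order of $\mathcal{C}\to C$ along that node), so there is no monodromy obstruction and the local $\mathcal{M}_S$'s patch.

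For the inverse functor $G$, start from $(C/S,\sigma_i,a_i,\mathcal{M}_S^{min}\to \mathcal{M}_S)$. Simplicity of $\mathcal{M}_S^{min}\to \mathcal{M}_S$ forces the induced refinement $\mathbb{N}^{E}\to \overline{\mathcal{M}}_S$ to be diagonal, of the form $\bigoplus_e(\mathbb{N}\hookrightarrow \tfrac{1}{r_e}\mathbb{N})$, and simplicity again guarantees that the $r_e$ are globally defined on the node locus. I would then set $G(\text{data})$ to be the twisted curve built from $C$ by taking the $a_i$-th root along $\sigma_i$ and the balanced $r_e$-th root along the node $e$, gluing the local root stack constructions by functoriality of root stacks and the global nature of the $r_e$ and $a_i$. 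That the balanced root at a node is forced rather than an additional choice is exactly the content of the relation $\overline{x}+\overline{y}=\delta_e$ together with the local structure of a twisted node.

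Finally, the natural isomorphisms $F\circ G\cong \mathrm{id}$ and $G\circ F\cong \mathrm{id}$ reduce to the local dictionary identifying the $r$-th root stack of a log scheme along a smooth (resp.\ nodal) divisor with the operation of enlarging the relevant $\mathbb{N}$-factor of the ghost sheaf to $\tfrac{1}{r}\mathbb{N}$, together with the uniqueness of coarse moduli spaces and of minimal log structures. Compatibility with base change $S'\to S$ then follows because each ingredient is stable under pullback: formation of coarse spaces commutes with base change for twisted curves, minimal log structures pull back to minimal log structures, root stacks commute with base change, and simple morphisms pull back to simple morphisms. I expect the main obstacle to be the node case — verifying carefully that the single integer $r_e$ coming from the simple morphism reproduces the $\mu_{r_e}$-gerbe-plus-root-stack local model of a balanced twisted node, and that the balancing condition is automatic rather than extra data; this requires the precise local structure theorem for twisted curves together with an explicit computation of the log structure of a balanced twisted node.
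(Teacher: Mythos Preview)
The paper does not prove this proposition at all: it is stated as a quotation of Theorem~1.9 of \cite{ltcur}, and the paragraph that follows merely recalls the mechanics of the equivalence for later use. Your sketch is essentially the content of that cited proof, and it matches the paper's summary: from a twisted curve one extracts the canonical log enhancement (equivalently, the integers $r_e$ at the nodes and $a_i$ at the markings), pushes the log structure down to the coarse curve, and compares with the minimal log structure to obtain a simple morphism; the inverse rebuilds the twisted curve by rooting along nodes and markings. One minor difference in emphasis is that the paper's recollection phrases the forward functor via the universal property of basic log curves (the canonical log enhancement of $\mathcal{C}/S$ pushes forward to a log curve $C/S$, which then factors through the basic one), whereas you read off the $r_e$ directly from the local isotropy; these are of course the same data. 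Since the statement is imported wholesale from the literature, there is nothing further to compare.
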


In order to frame the definition of a twisted log curve in a way which will be useful for us, we briefly recall the mechanics of the above equivalence. One first shows that every family of twisted curves $\mathcal{C} \rightarrow S$ admits a canonical log enhancement, producing a twisted log curve over a log scheme $(S,\mathcal{M}_S')$, and the log structure on $\mathcal{C}$ pushes forward to the coarse curve $C$. By the universal property satisfied by basic log curves, the log curve $C/S$ pulls back from a basic log curve $C^{min}/S^{min}$. The resulting morphism of log structures on $S$ is a simple morphism of log structures. The tuple of data given by the orders of twisting at marked points, the prestable curve $C/S$ and the morphism $\mathcal{M}_{S^{min}} \rightarrow \mathcal{M}_{S}$ gives a family of log twisted curves. Conversely, every log twisted curve is shown is shown to come from a unique such orbifold enhancement of the underlying family of stable curve. Moreover, the canonical log enhancement of a twisted curve satisfies a similar universal property familiar from the theory of ordinary stable curves, namely any other log enhancement of the twisted curve $\mathcal{C} \rightarrow S$ is realized by an appropriate log morphism on the base induced by a unique morphism of log structures on $S$. As a result, we can think of a twisted log curve both as in the definition as above, or as the data of a log twisted curve together with a morphism of log schemes $(S,\mathcal{M}_S)\rightarrow (S,\mathcal{M}_S')$, with $\mathcal{M}_S'$ as above.

Twisted log curves arise via tropical considerations in the following way for us: Suppose that we have a family of prestable log curve $C/S$, and for simplicity we assume the dual graph over all schematic points is given by $G_\tau$. Tropicalization gives a family of tropical curves $\Gamma \rightarrow \Sigma(S) = \sigma$, which is induced by a morphism $\sigma \rightarrow \tau = \mathbb{R}_{\ge 0}^{|E(G_\tau)|}$. Suppose we have a lattice refinement $\Gamma' \rightarrow \Gamma$ such that for all cones $\omega$ of $\Gamma'$, the map $\omega_\NN \rightarrow \sigma_\NN$ is surjective. Such a modification is determined by numbers $k_e$ for each edge and leg given by $|coker(\sigma_{e',\NN}^{gp} \rightarrow \sigma_{e,\NN}^{gp})|$ for each edge and leg $e$ refined by an edge/leg $e'$ of $\Gamma'$. Since for all $e \in E(G_\tau)$ and all integral tropical curves in the family $\Gamma$, the length of the edge $e$ must be divisible by $k_e$, the cone map $\sigma \rightarrow \tau = \mathbb{R}_{\ge 0}^{|E(G_\tau)|}$ will factor through a simple lattice refinement $\tau'$ of $\tau$ determined by the numbers $k_e$. Moreover, the refinement $\Gamma' \rightarrow \Gamma$ is induced by pulling back a refinement $\Gamma'_{\tau'}\rightarrow \Gamma_{\tau'}$ of the pullback of the universal curve to $\tau'$.

%A choice of lattice refinement $\sigma' \subset \sigma$ and a positive integer for each leg of $\Gamma$ determines another family of tropical curves $\Gamma' \rightarrow \sigma'$ together with a lattice refinement $\Gamma' \rightarrow \Gamma$ making the obvious square commute. Since $\Gamma'\rightarrow \sigma'$ is a family of curves of type $\tau$, we have $\Gamma' = \Gamma_{\tau'} \times_{\tau'} \sigma'$ for any lattice refinement $\tau' \subset \tau$ such that $\sigma' \rightarrow \tau$ factors through the inclusion. Now suppose we are given a morphism of log structures $\mathcal{M}_S \rightarrow \mathcal{M}_S'$ on $S$ such that the tropicalization of the resulting morphism $(S,\mathcal{M}_S') \rightarrow (S,\mathcal{M}_S)$ admits a factorization through $\sigma'\rightarrow \sigma$. We pick a simple lattice refinement $\tau' \subset \tau$ which extends the lattice refinement $im(\sigma')\subset im(\sigma)$, and recall that the pullback of the restriction of the universal family on $\tau'$ to $\sigma'$ is $\Gamma'$, hence is independent of the choice of $\tau'$. 

Now recall from Lemma \ref{equiv} that the morphism $\sigma \rightarrow \tau'$ corresponds to a morphism of Artin fans $\mathcal{A}_{\sigma} \rightarrow \mathcal{A}_{\tau'}$, and by composing the strict morphism $(S,\mathcal{M}_{S}) \rightarrow \mathcal{A}_{\sigma}$ with this map and pulling back the log structure, we produce a log structure we call $\mathcal{M}^{tw}$. Similarly, the morphism $\tau' \rightarrow \tau$ induces a simple morphism of log structures $\mathcal{M}_S^{min} \rightarrow \mathcal{M}_S^{tw}$ with $\mathcal{M}_S^{min}$ the basic log structure on $S$. Hence we find the data of a log twisted curve together with a map $(S,\mathcal{M}_S) \rightarrow (S,\mathcal{M}_S^{tw})$. By the remarks in the paragraph following Proposition \ref{logtw}, this gives by a twisted log curve $\mathcal{C}$ with coarse curve $C$. Moreover, the morphism to the coarse curve tropicalizes to $\Gamma' \rightarrow \Gamma$. 

%Moreover, any other choice of lattice refinement $\tau' \rightarrow \tau$ extending $im(\sigma') \subset im(\sigma)$ gives the same twisted log curve over $(S,\mathcal{M}_S')$. Indeed, the twisted log curve is also produced by pulling back a modification of the Artin fan $\mathcal{A}_C$ for $C\times_{(S,\mathcal{M}_S)} (S,\mathcal{M}_S')$, which follows from the equality $\Gamma' = \Gamma_{\tau'} \times_{\tau'} \sigma'$ for any allowed choice of refinement $\tau' \subset \tau$. 

We now give a lemma describing how puncturings of twisted log curve $\mathcal{C}$ may be produced from puncturing of the coarse log curve $C$.

\begin{lemma}
Suppose $C \rightarrow S$ is a log curve, $\rho: \mathcal{C} \rightarrow C$ is a twisted log curve over $C$ and $C^\circ\rightarrow C$ is a puncturing of $C$. Then $\mathcal{C}^\circ := \mathcal{C}\times_{C}C^\circ \rightarrow \mathcal{C}$ is a puncturing of the twisted log curve $\mathcal{C}$.
\end{lemma}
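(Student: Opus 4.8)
The plan is to verify the two defining conditions of a puncturing from Definition \ref{defpunc} directly on the fiber product $\mathcal{C}^\circ = \mathcal{C} \times_C C^\circ$, using that puncturings are stable under the relevant base change. First I would recall that the log structure $\mathcal{M}_{\mathcal{C}}$ on the twisted log curve carries a canonical decomposition $\mathcal{M}_{\mathcal{C}} = \mathcal{M} \oplus_{\mathcal{O}_{\mathcal{C}}^\times} \mathcal{P}$ coming from pulling back the corresponding decomposition on $C$ along the strict-away-from-the-special-locus morphism $\rho$: indeed $\rho$ is an isomorphism on coarse spaces away from the twisted/marked sections, so the markings along which $C^\circ$ is punctured pull back to sections of $\mathcal{C}$, giving the summand $\mathcal{P}$ on $\mathcal{C}$, and $\rho^\flat$ identifies the sub-summand $\mathcal{M}$. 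Then $\mathcal{M}_{\mathcal{C}^\circ}$ is defined as the image of $\rho^{*}\mathcal{M}_{C^\circ}$ (equivalently the pullback of the subsheaf $\mathcal{M}_{C^\circ} \subset \mathcal{M} \oplus_{\mathcal{O}_C^\times}\mathcal{P}^{gp}$) inside $\mathcal{M} \oplus_{\mathcal{O}_{\mathcal{C}}^\times}\mathcal{P}^{gp}$, with structure map $\alpha_{\mathcal{C}^\circ}$ induced by $\rho^\sharp$ composed with $\alpha_{C^\circ}$.

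The two axioms then follow by pullback. For condition (1), the inclusion $\mathcal{M}_{\mathcal{C}} \hookrightarrow \mathcal{M}_{\mathcal{C}^\circ}$ is the pullback along $\rho$ of the inclusion $\mathcal{M}_C \hookrightarrow \mathcal{M}_{C^\circ}$, and since $\rho$ is log étale (it arises from a lattice refinement), pulling back a morphism of log structures yields a morphism of log structures on $\underline{\mathcal{C}}$; so the map $p^\flat$ for $\mathcal{C}^\circ$ is a morphism of log structures, containing $\mathcal{M}_{\mathcal{C}}$ by construction. For condition (2), fix a geometric point $x \to \mathcal{C}$ and a section $s_x \in \mathcal{M}_{\mathcal{C}^\circ,x} \setminus \mathcal{M}_{\mathcal{C},x}$; writing $s_x = (m_x, p_x)$, the image $\rho(x) = y$ is a geometric point of $C$, and because the stalk of $\mathcal{M}_{\mathcal{C}^\circ}$ at $x$ is (up to the finite-index identification of ghost sheaves coming from the refinement) obtained from the stalk of $\mathcal{M}_{C^\circ}$ at $y$, the component $m_x$ lies in $\mathcal{M}_x$ and satisfies $\alpha_{\mathcal{M}}(m_x) = 0$ pulled back from $\alpha_{\mathcal{M}}(m_y) = 0$ in $\mathcal{O}_{C,y}$; hence $\alpha_{\mathcal{C}^\circ}(s_x) = \alpha_{\mathcal{M}}(m_x) = 0$ in $\mathcal{O}_{\mathcal{C},x}$. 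The containments $\mathcal{M}_{\mathcal{C}} \subset \mathcal{M}_{\mathcal{C}^\circ} \subset \mathcal{M} \oplus_{\mathcal{O}_{\mathcal{C}}^\times}\mathcal{P}^{gp}$ and finiteness of the subsheaf are preserved by pullback as well, so $\mathcal{C}^\circ$ is a fine subsheaf with the required structure map.

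The main subtlety — and the step I expect to require the most care — is checking that the decomposition $\mathcal{M}_{\mathcal{C}} = \mathcal{M} \oplus_{\mathcal{O}_{\mathcal{C}}^\times}\mathcal{P}$ along which we puncture $\mathcal{C}$ genuinely agrees with the pullback of the decomposition on $C$, and that the puncturing $C^\circ \to C$ only modifies the $\mathcal{P}$-part over the marked sections, so that these sections lift to (and their contact data is compatible with) the twisted curve $\mathcal{C}$. This is where the geometry of $\rho$ enters: $\rho$ is an isomorphism away from the special sections, and at a marked section the twisting is by a root stack in the divisor corresponding to that marked point, so the relevant summand $\mathcal{P}_x$ on $\mathcal{C}$ is the sub-monoid of $\mathbb{N}$ (or a finite-index refinement thereof) generated by the image of $\mathcal{P}_y$; one must confirm the puncturing condition $\alpha(s_x) = 0$ is insensitive to passing to this refinement, which it is because $\alpha_{\mathcal{P}}$ vanishes on the punctured part regardless. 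Once these compatibilities are in place the verification is formal, so I would present this as a short lemma whose proof is "pull back the construction along $\rho$ and check the two conditions pointwise," flagging the decomposition-compatibility as the one point needing the structure of $\rho$.
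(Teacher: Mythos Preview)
Your overall strategy is the right one, but the identification of the log structure on $\mathcal{C}^\circ$ is not correct, and this causes the verification of condition~(2) to collapse. The log structure on the fiber product $\mathcal{C}\times_C C^\circ$ is the pushout $\mathcal{M}_{\mathcal{C}} \oplus_{\rho^*\mathcal{M}_C} \rho^*\mathcal{M}_{C^\circ}$, not merely ``the image of $\rho^*\mathcal{M}_{C^\circ}$ inside $\mathcal{M}\oplus_{\mathcal{O}_{\mathcal{C}}^\times}\mathcal{P}^{gp}$''. At a twisted marked point the map $\rho^\flat$ on ghost sheaves sends the $\mathbb{N}$-summand of $\overline{\mathcal{M}}_{C}$ to $r\mathbb{N}\subset\mathbb{N}=\overline{\mathcal{P}}_{\mathcal{C}}$, so the image of $\rho^*\mathcal{M}_{C^\circ}$ alone does not even contain $\mathcal{M}_{\mathcal{C}}$; you must adjoin $\mathcal{M}_{\mathcal{C}}$ as well. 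The paper makes this explicit: the stalk of $\overline{\mathcal{M}}_{\mathcal{C}^\circ}$ at a puncture is the smallest submonoid of $\overline{\mathcal{M}}_{S}\oplus\mathbb{Z}$ containing both $\overline{\mathcal{M}}_{\mathcal{C}}=\overline{\mathcal{M}}_S\oplus\mathbb{N}$ and the image $i(\overline{\mathcal{M}}_{C^\circ})$.

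This matters for condition~(2). A section $s\in\mathcal{M}_{\mathcal{C}^\circ,x}\setminus\mathcal{M}_{\mathcal{C},x}$ need not come from a single section over $y=\rho(x)$; it is only a \emph{sum} $\overline{s}=\overline{s}_1+\overline{s}_2$ with $\overline{s}_1\in\overline{\mathcal{M}}_{\mathcal{C},x}$ and $\overline{s}_2=i(\overline{t})$ for some $\overline{t}\in\overline{\mathcal{M}}_{C^\circ,y}\setminus\overline{\mathcal{M}}_{C,y}$. One then lifts $s_1,t$ and uses the puncturing axiom for $C^\circ$ on $t$ to force $\alpha_{\mathcal{C}^\circ}(s)=0$ and $\alpha_{\mathcal{M}}(m_x)=0$. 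Your argument that ``$\alpha_{\mathcal{M}}(m_x)=0$ pulled back from $\alpha_{\mathcal{M}}(m_y)=0$'' presupposes a single preimage $m_y$ that does not exist in general. Separately, you assert fineness is ``preserved by pullback'', but the coherent pushout of fine log structures need not be fine; the paper secures this by first observing that $\rho^*\mathcal{M}_C\to\mathcal{M}_{\mathcal{C}}$ is an integral morphism, so $\mathcal{C}\to C$ is integral and the fine fiber product agrees with the coherent one. Fixing these two points --- computing the pushout correctly and running the factorization argument --- gives exactly the paper's proof.
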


\begin{proof}
First observe that the map $\rho^*\mathcal{M}_C \rightarrow \mathcal{M}_{\mathcal{C}}$ is an integral morphism of log structures, hence $\mathcal{C} \rightarrow C$ is integral. Thus, the coherent log structure $\mathcal{C}^\circ$ is also fine. To verify the second condition of Definition \ref{defpunc}, we note that for $p \in \mathcal{C}$ a punctured point, and $s \in \mathcal{M}_{\mathcal{C}^\circ,p}$, we have $s \notin \mathcal{M}_{\mathcal{C},p}$ if and only if its image $\overline{s} \in \overline{\mathcal{M}}_{\mathcal{C}^\circ,p}$ is not contained in $\overline{\mathcal{M}}_{\mathcal{C},p} \subset \overline{\mathcal{M}}_{\mathcal{C}^\circ,p}$. Since the log structure on $\mathcal{C}^\circ$ is fine, we can compute the stalk $\overline{\mathcal{M}}_{\mathcal{C}^\circ,p}$ in the category of integral monoids. Letting $Q_p = \overline{\mathcal{M}}_{C^\circ,p} \subset \overline{\mathcal{M}}_{S,\pi(p)}\oplus \ZZ$, $\overline{\mathcal{M}}_{\mathcal{C},p} = \overline{\mathcal{M}}_{S,\pi(p)} \oplus \NN$, the pushout is given by the smallest submonoid of $\overline{\mathcal{M}}_{S,\pi(s)} \oplus \ZZ$ containing $\overline{\mathcal{M}}_{S,\pi(s)} \oplus \NN$ and $i(Q)$, with $i: \overline{\mathcal{M}}_{C,p}^{gp} \rightarrow \overline{\mathcal{M}}_{\mathcal{C},p}^{gp}$. In particular, if $\overline{s} \notin \overline{\mathcal{M}}_{\mathcal{C},p}$, we can factor $\overline{s}$ as $\overline{s} = \overline{s}_1 + \overline{s}_2$ with $\overline{s}_1 \in \overline{\mathcal{M}}_{s,\pi(s)} \oplus \NN$ and $\overline{s}_2 = i(\overline{t})$ for $\overline{t} \in \overline{\mathcal{M}}_{C^\circ,p} \setminus \overline{\mathcal{M}}_{C,p}$. By picking lifts of $s_1 \in \mathcal{M}_{\mathcal{C},p}$ and $t \in \mathcal{M}_{C^\circ,p}$, we have $s_1 + s_2$ differs from $s$ only by a unit. In particular, $\alpha_{\mathcal{C}^\circ}(s) = 0$ if and only if $\alpha_{C^\circ}(t) = 0$, which holds since $C^\circ$ is a punctured log curve. Similarly, writing $s = (e_1,e_2)$ and $s_i = (e_{i1},e_{i2})$, then $e_1$ is equal to $e_{11}+e_{21}$ up to a unit in $\mathcal{O}_{S,\pi(p)}^\times$. Since $\alpha_{S}(e_{21}) = 0$ as $C^\circ$ is a punctured log curve, we have $\alpha_S(e_1) = 0$, as required. 
\end{proof}

We now proceed to a proof of the representability claim:

\begin{proposition}\label{repfun}
Let $\mathscr{M}(\tilde{X}/B,\tilde{\tau})$ be the fibered category defined over log schemes such that the objects of the groupoid over $S$ are the following data:

\begin{enumerate}
\item A twisted log curves $\mathcal{C} \rightarrow S$ over an ordinary log curve $C \rightarrow S$, in particular a log structure $\mathcal{M}_S$ on $S$.

\item A punctured log map $C^\circ \rightarrow X$ marked by the type $\tau$.

\item A commuting diagram of logarithmic morphisms:

\begin{equation}\label{maproot}
\begin{tikzcd}
\mathcal{C}^\circ \arrow{r} \arrow[d,"\rho"] & \tilde{X}\arrow[d,"c"]\\
C^\circ \arrow{r}\arrow{d} & X\arrow{d}\\
(S,\mathcal{M}_S) \arrow{r} & B
\end{tikzcd}.
\end{equation}
We further require the top horizontal morphism to be representable.

% and the morphism $(S,\mathcal{M}_S) \rightarrow \mathfrak{M}(\mathcal{X},\tau)$ inducing the log map $C^\circ \rightarrow \mathcal{X}$ factors through a strict morphism $(S,\mathcal{M}_S) \rightarrow \mathfrak{M}_{\tilde{\mathcal{X}},\tilde{\tau}}$.

 \end{enumerate} 
% We also define the fibered category $\mathfrak{M}(\tilde{\mathcal{X}},\tau)$ analogously to $\mathscr{M}(\tilde{X},\tau)$, with $\mathcal{X}$ and $\tilde{\mathcal{X}}$ replacing $X$ and $\tilde{X}$ respectively above, and replacing stability with prestability. 
Then the fibered category of $\mathscr{M}_{\tilde{X},\tilde{\tau}}$ over fine and saturated log schemes is isomorphic to $\mathscr{M}(\tilde{X}/B,\tilde{\tau})$. 

\end{proposition}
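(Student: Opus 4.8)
The plan is to establish the isomorphism of fibered categories by exhibiting functors in both directions and checking they are mutually inverse, working compatibly with base change so that it suffices to treat each log scheme $S$ at a time. First I would recall that $\mathscr{M}_{\tilde{X},\tilde{\tau}}$ was constructed in the excerpt as the iterated fiber product in diagram \eqref{twistuniprop}: it is obtained from $\mathscr{M}(X,\tau)$ by pulling back the log \'etale modification $\mathfrak{M}_{\tilde{\mathcal{X}},\tilde{\tau}}\rightarrow\mathfrak{M}(\mathcal{X},\tau)$. So an $S$-point of $\mathscr{M}_{\tilde{X},\tilde{\tau}}$ is the data of a punctured log stable map $C^\circ\rightarrow X$ of type $\tau$ over $S$ together with a lift of the induced map $S\rightarrow\mathfrak{M}(\mathcal{X},\tau)$ to $\mathfrak{M}_{\tilde{\mathcal{X}},\tilde{\tau}}$. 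By the universal property of the refinement $\tilde{\Gamma}_{\gamma}\rightarrow\Gamma_{\tau}$ established just before Proposition~\ref{logtw} (and the gluing statement that these refinements determine a log \'etale modification of $\mathcal{A}_{\mathfrak{M}(\mathcal{X})}$), such a lift is equivalent to a factorization of $\Sigma(S)\rightarrow\tau$ through $\gamma$, which — via the Artin fan dictionary of Lemma~\ref{equiv} and the discussion of simple lattice refinements — is precisely the data of a twisted log curve $\mathcal{C}\rightarrow C$ whose tropicalization realizes the refinement $\tilde{\Gamma}\rightarrow\Gamma$. This is the bridge between the fiber-product description and item (1) of the proposition.

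Next I would produce the functor from $\mathscr{M}_{\tilde{X},\tilde{\tau}}$ to $\mathscr{M}(\tilde{X}/B,\tilde{\tau})$. Given an $S$-point as above, the factorization through $\gamma$ gives the twisted log curve $\mathcal{C}$ (item 1) by the mechanism recalled after Proposition~\ref{logtw}, and the underlying punctured log map $C^\circ\rightarrow X$ is item (2). For item (3), I puncture $\mathcal{C}$ using the preceding Lemma to get $\mathcal{C}^\circ=\mathcal{C}\times_C C^\circ$, and I need to promote the composite $\mathcal{C}^\circ\rightarrow C^\circ\rightarrow X$ to a representable map $\mathcal{C}^\circ\rightarrow\tilde{X}$ lifting it along $c\colon\tilde{X}\rightarrow X$. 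Here I use the cartesian description of $\tilde{X}=X\times_{\mathcal{X}}\tilde{\mathcal{X}}$ from the big square: a lift to $\tilde{X}$ is the same as a lift of $C^\circ\rightarrow\mathcal{X}$ to $\tilde{\mathcal{X}}$, and such a lift exists and is unique because the map on tropicalizations $\tilde{\Gamma}\rightarrow\Gamma$ is exactly the refinement cut out by $\Sigma(\tilde{X})\rightarrow\Sigma(X)$ — this is the valuative/tropical criterion for lifting along a log \'etale modification, i.e.\ the analogue of Proposition~$4.1$ of \cite{pbirinv} that was invoked in the construction. Representability of $\mathcal{C}^\circ\rightarrow\tilde{X}$ follows because the isotropy of $\tilde{X}$ over a point of a stratum is the cyclic group recording the lattice index, and the twisting order $a_i$ of $\mathcal{C}$ at the corresponding marked point or node was defined precisely by that index (this is the content of the $k_e$ computation before Proposition~\ref{logtw}); matching these orders is what makes the map on automorphism groups injective.

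For the functor in the other direction, I start from data (1)--(3) of $\mathscr{M}(\tilde{X}/B,\tilde{\tau})$. The twisted log curve $\mathcal{C}$ determines, by Proposition~\ref{logtw} and the ensuing discussion, a simple morphism $\mathcal{M}_S^{min}\rightarrow\mathcal{M}_S^{tw}$, hence a morphism $\Sigma(S)\rightarrow\tau$ factoring through a simple refinement; the representable log map $\mathcal{C}^\circ\rightarrow\tilde{X}$ forces the tropicalization of $\mathcal{C}\rightarrow C$ to agree with the refinement $\tilde{\Gamma}\rightarrow\Gamma$ pulled back from $\Sigma(\tilde{X})\rightarrow\Sigma(X)$ (again because a representable map to a root stack must see the full ramification), so the factorization is through $\gamma$, which is exactly a lift $S\rightarrow\mathfrak{M}_{\tilde{\mathcal{X}},\tilde{\tau}}$; together with the punctured map $C^\circ\rightarrow X$ this is an $S$-point of the fiber product $\mathscr{M}_{\tilde{X},\tilde{\tau}}$. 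I then check the two constructions are inverse to one another: in one direction this is the uniqueness clause of the universal property of $\gamma$ together with the uniqueness of the canonical log enhancement of a twisted curve; in the other it is the uniqueness of the lift along the cartesian square $\tilde{X}=X\times_{\mathcal{X}}\tilde{\mathcal{X}}$. Finally I note that every step — fiber products of log stacks, the twisted-curve/log-twisted-curve equivalence, tropicalization — commutes with base change $S'\rightarrow S$ among fs log schemes, so the isomorphism is an isomorphism of fibered categories. The main obstacle I expect is the representability bookkeeping in item (3): carefully matching the twisting orders $a_i$ prescribed by $\mathcal{C}$ against the cyclic isotropy groups of $\tilde{X}$ along each boundary stratum and each node, and verifying that a log map to $\tilde{X}$ is representable precisely when its tropicalization realizes the lattice refinement with no further divisibility slack — this is where the "representable" hypothesis on the top arrow of \eqref{maproot} does its real work, and it must be reconciled with the fact that $\mathfrak{M}_{\tilde{\mathcal{X}},\tilde{\tau}}$ was built from the refinement $\tilde{\Gamma}_\gamma$ by pushing forward and gluing, so the two notions of "correct amount of twisting" genuinely coincide.
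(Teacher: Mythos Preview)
Your approach is essentially the same as the paper's: both establish the equivalence by matching the fiber-product description of $\mathscr{M}_{\tilde{X},\tilde{\tau}}$ with the moduli description, using the tropical universal property of the refinement $\gamma\rightarrow\tau$ to produce the twisted curve and then lifting along the cartesian square $\tilde{X}=X\times_{\mathcal{X}}\tilde{\mathcal{X}}$. The paper organizes this slightly differently, passing through an intermediate stack $\mathfrak{M}(\tilde{\mathcal{X}}/B,\tilde{\tau})$ of twisted maps to the relative Artin fan and then invoking Propositions~5.4, 5.7, 5.8 and 6.1 of \cite{pbirinv} to conclude the square is cartesian, whereas you construct the two functors directly; but the content is the same.

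One imprecision to fix: you write that the isotropy of $\tilde{X}$ over a point of a stratum is \emph{the cyclic group} recording the lattice index. In general it is the finite abelian group $\sigma_{\NN}^{gp}/m_p(\sigma'^{gp}_{\NN})$, which need not be cyclic when several divisors meet. What \emph{is} cyclic is the stabilizer $\mu_r$ of a special point of $\mathcal{C}$, and the representability check is that the induced homomorphism $\mu_r\rightarrow\sigma_{\NN}^{gp}/m_p(\sigma'^{gp}_{\NN})$ is injective. The paper carries this out explicitly: it identifies the map on stabilizers with the map on lattice cokernels induced by $\Sigma(f)^{gp}$, and shows injectivity is equivalent to $\Sigma(f)^{gp}(m_q(\omega'^{gp}_{\NN}))=\Sigma(f)^{gp}(\omega^{gp}_{\NN})\cap m_p(\sigma'^{gp}_{\NN})$, which holds precisely because the refinement $r_e$ at each edge and leg was chosen \emph{minimally}. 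This is exactly the ``no further divisibility slack'' criterion you articulate in your final paragraph, so your outline is correct once the cyclic-group claim is replaced by this lattice computation. (Also, in your lifting step the map that must factor through $\tilde{\mathcal{X}}$ is $\mathcal{C}^\circ\rightarrow\mathcal{X}$, not $C^\circ\rightarrow\mathcal{X}$; the untwisted curve generally does not lift.)
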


\begin{proof}
We recall that $\mathscr{M}_{\tilde{X},\tilde{\tau}}$ is defined as the fiber product given in Display \ref{twistuniprop}. Hence, to prove the isomorphism, we must show that $\mathscr{M}(\tilde{X}/B,\tilde{\tau})$ satisfies the the same universal property as $\mathscr{M}_{\tilde{X}/B,\tilde{\tau}}$. Clearly we have a morphism $\mathscr{M}(\tilde{X}/B,\tilde{\tau}) \rightarrow \mathscr{M}(X/B,\tau)$ by taking the map associated with $C^\circ \rightarrow \mathcal{X}$. By definition, the tropical type of every twisted log map $\mathcal{C}^\circ \rightarrow \tilde{X}$ associated with a geometric point of $\mathscr{M}(\tilde{X}/B,\tilde{\tau})$ is a tropical lift of a log stable map $C\rightarrow X$. Hence, the proof of Proposition $5.4$ of \cite{pbirinv} applies in this setting to give a strict and idealized strict morphism $\mathscr{M}(\tilde{X}/B,\tilde{\tau}) \rightarrow \mathfrak{M}_{\tilde{\mathcal{X}},\tilde{\tau}} $ which factors the natural morphism $\mathscr{M}(\tilde{X}/B,\tilde{\tau}) \rightarrow \mathfrak{M}(\mathcal{X}/B,\tau)$. 

In order to show the resulting square is cartesian, we must first construct a morphism of stacks $\mathfrak{M}_{\tilde{\mathcal{X}},\tilde{\tau}} \rightarrow \mathfrak{M}(\tilde{\mathcal{X}}/B,\tilde{\tau})$ which factors $\mathscr{M}(\tilde{X}/B,\tilde{\tau}) \rightarrow \mathfrak{M}(\tilde{\mathcal{X}}/B,\tilde{\tau})$, with the latter stack defined analogously to $\mathscr{M}(\tilde{X}/B,\tilde{\tau})$. The construction works essentially unchanged from Section $5.1$ of \cite{pbirinv}, although is simpler in this setting due to the uniqueness of a tropical lift of a tropical type. 

Suppose we have a fine and saturated scheme $S$, together with an $S$-valued object of $\mathfrak{M}_{\tilde{\mathcal{X}},\tilde{\tau}}$, giving a morphism $C^\circ/S \rightarrow X$. By the same reduction that appears in Section $5.1$ of \cite{pbirinv}, we may assume the map $S \rightarrow \mathcal{A}_{\mathfrak{M}_{\tilde{X},\tau}}$ factors through an inclusion of an idealized Artin cone $\mathcal{A}_{\tau',I} \subset \mathcal{A}_{\mathfrak{M}_{\tilde{X},\tau}}$, hence $\Sigma(S) = \tau'$. Associated to $\tau'$ is a universal family of punctured tropical curves $\tilde{\Gamma}^\circ_{\tau'} \rightarrow \tilde{X}$ over the cone $\tau'$, and a map of cone complexes $\tilde{\Gamma}^\circ_{\tau'} \rightarrow \Gamma_{\tau'}$ given cone wise by a lattice refinement. Since $C^\circ/S \rightarrow X$ is induced by a morphism $S \rightarrow \mathfrak{M}_{\tilde{\mathcal{X}},\tau}$, we have $\Sigma(C^\circ) \rightarrow \Sigma(X)$ restricted to cones associated with vertices of $\tau$ factors through the lattice refinement $\Sigma(\tilde{X}) \rightarrow \Sigma(X)$. 

In order to lift extend the tropical lift to edges and legs, we must refine the lattices of the edges and legs of $\Gamma_\tau$. To produce this refinement, note that since the integral points of the maximal cones of $\Gamma_{\tau'}^\circ$ naturally embed into the lattice $\tau_\NN^{'gp} \times \ZZ$, and the refinement morphism restricts to an isomorphism on the submodule $\tau_{\NN}^{gp'}$, the refinement restricted to a given maximal cone associated with an edge or leg $e$ must be of the form $\tau_\NN^{'gp} \times r_e\ZZ \subset \tau_\NN^{'gp}\times \ZZ$ for some natural number $r_e \in \NN_{\ge 0}$. Hence, this lattice refinement induces a twisted log curve $\mathcal{C} \rightarrow C$. The tropical lift $\tilde{\Gamma} \rightarrow \Sigma(\tilde{X})$ of $\Gamma \rightarrow \Sigma(X)$ guarantees there exists a map $\tilde{f}: \mathcal{C}^\circ \rightarrow \tilde{\mathcal{X}}$ lifting the map $C^\circ \rightarrow \mathcal{X}$. 

To see that $\tilde{f}$ is representable, note that this is true if the map induced on stabilizer groups of geometric points is injective. Thus, let $q' \in \mathcal{C}(\kk)$ be a geometric point contained in either a nodal or marked section of $\mathcal{C}$, and suppose $Stab(q') \cong \mu_r$. In particular, the induced morphism $m_q: q' \rightarrow \rho(q') := q$ is a generalized root stack of the log point $q$, introduced in the last paragraph of Section $3.1$. Letting $\omega' = \Sigma(q')$ and $\omega = \Sigma(q)$, this map is induced by a lattice refinement $\omega^{'gp}_\NN \rightarrow \omega^{gp}_\NN$, which with a slight abuse of notation we will refer to as $m_q$ as well. Moreover, we have an induced map $B\mu_r \rightarrow \tilde{\mathcal{X}}$ which lifts a map $Spec\text{ }\kk \rightarrow \mathcal{X}$. Letting $p = c\circ\tilde{f}(q) \in \mathcal{X}(\kk)$, the morphism on stabilizer groups $Stab(q) \rightarrow Stab(p)$ induced by $c\circ f: \mathcal{C} \rightarrow \mathcal{X}$ must be the zero map, hence the morphism $B\mu_r \rightarrow \mathcal{X}$ must factor through a geometric point. Since $\tilde{\mathcal{X}}$ is a generalized root stack over $\mathcal{X}$ constructed via a fiber diagram in algebraic stacks, the morphism $B\mu_r \rightarrow \tilde{\mathcal{X}}$ must factor through a root stack of the log point $p$, denoted by $m_p: p' \rightarrow p$. Letting $\Sigma(p') = \sigma'$ and $\Sigma(p) = \sigma$, the morphism $m_p$ is induced by a lattice refinement $\sigma^{'gp}_\NN \rightarrow \sigma^{gp}_\NN$, which again with a slight abuse of notation we will refer to as $m_p$ as well.

Since $p' \rightarrow \tilde{\mathcal{X}}$ is representable, in order to show that $\tilde{f}$ is representable, it suffices to show that $\tilde{f}|_{q'}: B\mu_r \rightarrow p'$ is representable. Recall that if $m_p: p' \rightarrow p$ is a generalized root stack realized as a log \'etale modification of a log point $p$ with $\Sigma(p) = \sigma$ and $\Sigma(p') = \sigma'$ induced by a lattice refinement $m_p$, then $Stab(p') = \sigma^{gp}_\NN/m_p(\sigma^{'gp}_\NN)$. Since $m_p: p' \rightarrow p$ and $m_q: q' \rightarrow q$ are log monomorphisms, $f: q \rightarrow p$ determines $\tilde{f}: q' \rightarrow p'$, we must have $\Sigma(f)^{gp}_\NN((\Sigma(m_q)(\omega^{'gp}_\NN)) \subset m_p(\sigma^{'gp}_\NN)$ and the morphism on stabilizers is induced by $\omega_\NN^{gp} \rightarrow \sigma_\NN^{gp}$. This induced map is injective only if 
\[\Sigma(f)^{gp}_\NN((m_q(\omega^{'gp}_\NN))) = \Sigma(f)^{gp}_\NN(\omega_\NN^{gp})\cap m_p(\sigma^{'gp}_\NN).\]
By the minimal choice of lattice refinement at each leg and edge of $G_\tau$ taken to allow a lift of $\tilde{\Gamma}\rightarrow \Gamma$ along the refinement $\Sigma(\tilde{X}) \rightarrow \Sigma(X)$, the representability of $\tilde{f}$ follows.

In total we find an $S$-valued object of $\mathfrak{M}(\tilde{\mathcal{X}}/B,\tilde{\tau})$. The proofs of Propositions $5.7$ and $5.8$ of \cite{pbirinv} apply to show there is a morphism $\mathfrak{M}_{\tilde{\mathcal{X}},\tau}  \rightarrow \mathfrak{M}(\tilde{\mathcal{X}}/B,\tilde{\tau})$ which factors $\mathscr{M}(\tilde{X}/B,\tilde{\tau}) \rightarrow \mathfrak{M}(\tilde{\mathcal{X}}/B,\tilde{\tau})$. Finally the proof of Proposition $6.1$ of \cite{pbirinv} works to show that $\mathscr{M}(\tilde{X}/B,\tilde{\tau})$ satisfies the required universal property. 

\end{proof}

The morphism $\mathscr{M}(\tilde{X}/b,\tilde{\tau}) \rightarrow \mathfrak{M}_{\tilde{\mathcal{X}},\tilde{\tau}}$ can be equipped with the pulled back obstruction theory from $\mathscr{M}(X/B,\tau) \rightarrow \mathfrak{M}(\mathcal{X}/B,\tau)$, which may equivalently be given by the dual of:
\[ R\pi_*f^*T_{\tilde{X}/B}^{log} = R\pi_*f^*c^*T_{X/B}^{log}\]
The equality holds because the map to the coarse moduli space $c:\tilde{X} \rightarrow X$ is log \'etale. Virtual pullback of the fundamental class of $\mathfrak{M}_{\tilde{\mathcal{X}},\tau}$ is the sense of \cite{vpull} equips $\mathscr{M}(\tilde{X}/B,\tilde{\tau})$ with a virtual class $[\mathscr{M}(\tilde{X}/B,\tilde{\tau})]^{vir} \in A_{exp\text{ }dim}(\mathscr{M}(\tilde{X}/B,\tilde{\tau}))$. By the same arguments as in \cite{pbirinv}, the virtual classes are related by the following formula:

\begin{corollary}\label{birinvr}
Letting $r = |coker(\tilde{\tau}^{gp} \rightarrow \tau^{gp})|$, and $st: \mathscr{M}(\tilde{X},\tilde{\tau}) \rightarrow \mathscr{M}(X/B,\tau)$ the map which takes a twisted log map to the corresponding map on coarse spaces, we have the equality:
\[st_*([\mathscr{M}(\tilde{X}/B,\tilde{\tau})]^{vir}) = \frac{1}{r} [\mathscr{M}(X/B,\tau)]^{vir}.\]
\end{corollary}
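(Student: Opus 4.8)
The strategy mirrors the degree/multiplicity computations in \cite{pbirinv}, reduced to the present root-stack setting via the fiber diagram in Display \ref{twistuniprop}. The key point is that the whole construction of $\mathscr{M}(\tilde{X}/B,\tilde{\tau})$ is obtained by pulling back the log \'etale modification $\mathfrak{M}_{\tilde{\mathcal{X}},\tilde{\tau}} \to \mathfrak{M}(\mathcal{X}/B,\tau)$ along the strict morphism $\mathscr{M}(X/B,\tau) \to \mathfrak{M}(\mathcal{X}/B,\tau)$, and that the obstruction theory on $\mathscr{M}(\tilde{X}/B,\tilde{\tau}) \to \mathfrak{M}_{\tilde{\mathcal{X}},\tilde{\tau}}$ is literally the pullback of the one on $\mathscr{M}(X/B,\tau) \to \mathfrak{M}(\mathcal{X}/B,\tau)$ — this uses the displayed identity $R\pi_* f^* T^{log}_{\tilde{X}/B} = R\pi_* f^* c^* T^{log}_{X/B}$, which holds because $c$ is log \'etale. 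By the functoriality of virtual pullback (\cite{vpull}), it therefore suffices to prove the analogous pushforward formula for the Artin-fan-level stacks, i.e. that $\mathfrak{st}_*[\mathfrak{M}_{\tilde{\mathcal{X}},\tilde{\tau}}] = \tfrac1r [\mathfrak{M}(\mathcal{X}/B,\tau)]$ where $\mathfrak{st}$ is the coarsening morphism, and then base-change this identity along $\mathscr{M}(X/B,\tau) \to \mathfrak{M}(\mathcal{X}/B,\tau)$. Compatibility of virtual pullback with proper pushforward then propagates the factor $\tfrac1r$ to the target $\mathscr{M}(X/B,\tau)$.

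The combinatorial heart of the computation is the identification of the degree of the coarsening map $\mathfrak{st}\colon \mathfrak{M}_{\tilde{\mathcal{X}},\tilde{\tau}} \to \mathfrak{M}(\mathcal{X}/B,\tau)$ as $r = |\mathrm{coker}(\tilde{\tau}^{gp} \to \tau^{gp})|$. Here I would argue as follows. Both stacks are strict over their Artin fans, and the Artin fan of $\mathfrak{M}_{\tilde{\mathcal{X}},\tilde{\tau}}$ is obtained from that of $\mathfrak{M}(\mathcal{X}/B,\tau)$ by gluing the cone-stack modifications $\gamma \to \tau$ constructed in the discussion preceding Proposition \ref{repfun}; since each such modification is a lattice refinement with $\gamma \otimes \mathbb{Q} \cong \tau \otimes \mathbb{Q}$, the induced morphism of Artin cones $\mathcal{A}_{\tilde{\tau}} \to \mathcal{A}_\tau$ has the shape $[\mathrm{Spec}\,\kk[\tilde\tau^\vee_\NN] / \mathrm{Spec}\,\kk[\tilde\tau^{gp}_\NN]] \to [\mathrm{Spec}\,\kk[\tau^\vee_\NN]/\mathrm{Spec}\,\kk[\tau^{gp}_\NN]]$, whose relative coarse-space degree is exactly the index $|\tau^{gp}_\NN / \tilde\tau^{gp}_\NN| = r$. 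This coincides with the order of the generic stabilizer introduced by the twisting along marked points, edges, and legs, consistently with the description in the proof of Proposition \ref{repfun} where each maximal cone of $\Gamma_{\tau'}^\circ$ associated to an edge/leg acquires a refinement $\tau'^{gp}_\NN \times r_e\ZZ \subset \tau'^{gp}_\NN \times \ZZ$ — and $\prod_e r_e$ (weighted appropriately by vertex-level refinements) is precisely $r$.

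I would then assemble the pieces: (i) the coarsening $st\colon \mathscr{M}(\tilde{X}/B,\tilde{\tau}) \to \mathscr{M}(X/B,\tau)$ is the pullback of $\mathfrak{st}$, hence proper and of the same generic degree $r$ onto its image; (ii) by construction the virtual class $[\mathscr{M}(\tilde{X}/B,\tilde{\tau})]^{vir}$ is the virtual pullback along $\mathscr{M}(\tilde{X}/B,\tilde{\tau}) \to \mathfrak{M}_{\tilde{\mathcal{X}},\tilde{\tau}}$ of the fundamental class of the Artin-fan stack, and similarly on the untilded side; (iii) the two obstruction theories agree under $st$ and $\mathfrak{st}$ by the $T^{log}$-identity above; (iv) applying the projection formula / compatibility of virtual pullback with proper pushforward (as in \cite{pbirinv}) to the cartesian square reduces the statement to $\mathfrak{st}_*[\mathfrak{M}_{\tilde{\mathcal{X}},\tilde{\tau}}] = \tfrac1r[\mathfrak{M}(\mathcal{X}/B,\tau)]$, which follows from the degree computation of the previous paragraph together with the fact that both are smooth (or of pure expected dimension) so their fundamental classes are the genuine ones. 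The main obstacle I anticipate is purely bookkeeping: correctly tracking the rational multiplicity $\tfrac1r$ through the fs fiber product and the (possibly non-representable, stacky) coarsening map, in particular making sure that the "degree" of $\mathfrak{st}$ in the stacky sense — accounting for the fact that $\mathfrak{M}_{\tilde{\mathcal{X}},\tilde{\tau}}$ has extra automorphisms of order $r$ along the twisted locus — is the reciprocal $\tfrac1r$ and not $r$, which is exactly where the orbifold normalization $\mathcal{M}_{g,n} \to \overline{\mathcal{M}}_{g,n}$-type argument from \cite{pbirinv} (and the analysis of stabilizers in the proof of Proposition \ref{repfun}) must be invoked verbatim.
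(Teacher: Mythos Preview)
Your proposal is correct and follows essentially the same approach as the paper, which simply asserts that ``by the same arguments as in \cite{pbirinv}'' the formula holds. You have accurately reconstructed what those arguments are: use the cartesian square of Display~\ref{twistuniprop} with compatible (pulled-back) obstruction theories, invoke Manolache's compatibility of virtual pullback with proper pushforward to reduce to the Artin-fan level, and there read off the factor $\tfrac{1}{r}$ from the fact that $\mathfrak{M}_{\tilde{\mathcal{X}},\tilde{\tau}} \to \mathfrak{M}(\mathcal{X}/B,\tau)$ is a log \'etale modification induced by a lattice refinement of index $r$, hence generically a $\mu_r$-gerbe.

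One minor point of presentation: your paragraph on the degree computation wavers between calling the degree ``$r$'' and ``$\tfrac{1}{r}$'' before settling on the correct stacky answer at the end. It would be cleaner to state directly that the morphism of Artin cones $\mathcal{A}_{\tilde\tau} \to \mathcal{A}_\tau$ is a $\mu_r$-gerbe (the source has generic stabilizer enlarged by $\mathrm{coker}(\tilde\tau^{gp}_\NN \to \tau^{gp}_\NN)$), so that $\mathfrak{st}_*[\mathfrak{M}_{\tilde{\mathcal{X}},\tilde{\tau}}] = \tfrac{1}{r}[\mathfrak{M}(\mathcal{X}/B,\tau)]$ immediately, rather than first computing a ``coarse-space degree $r$'' and then correcting. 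This is exactly the bookkeeping you flag as the main obstacle, and it is indeed the only subtlety; once stated this way there is nothing further to check.
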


\begin{remark}\label{idloget}
%\begin{enumerate}
%\item We remark that given a tropical lift $\tilde{\tau}$ of a tropical type $\tau$ along a generalized root stack $\tilde{X} \rightarrow X$, the slopes associated with an edge $e' \in G_{\tilde{\tau}}$ naturally come slopes of the form $\frac{\textbf{u}_e}{s_i}$ with $e \in E(G_\tau)$ the edge associated with $e'$, $s_i$ the index of the lattice refinement of the cone $\tau_e$, and $\textbf{u}_e$ slope of $e$ encoded in the tropical type $\tau$. 
Battistella, Nabijou and Ranganathan constructed in \cite{logroot2} moduli stacks $\mathscr{M}^{naive}(\tilde{X},\beta')$ and $\mathfrak{M}^{naive}(\tilde{\mathcal{X}},\beta')$ of naive twisted log maps to root stacks for non-realizable tropical types $\beta'$ which specify only (gerby) contact orders of marked points, together with a perfect obstruction theory for the morphism $\mathscr{M}^{naive}(\tilde{X},\beta')\rightarrow \mathfrak{M}^{naive}(\tilde{\mathcal{X}},\beta')$. Additionally, the forgetful map $\mathfrak{M}^{naive}(\tilde{\mathcal{X}},\beta') \rightarrow \mathfrak{M}^{tw}$ remembering only the domain twisted curve is idealized log \'etale, with the target equipped with the trivial idealized log structure. Moreover, assuming $\beta'$ is slope sensitive, the reduction of the stack $\mathfrak{M}^{naive}(\tilde{\mathcal{X}},\beta')_{red}$ is stratified by components corresponding to realizable tropical types, which recover the log stacks defined above. With the previous assumption, we will omit the superscript and call the resulting stacks $\mathscr{M}(\tilde{X},\beta')$ and $\mathfrak{M}(\tilde{\mathcal{X}},\beta')$. 

\end{remark}

%\begin{proposition}\label{idloget}
%After passing to compatible finite type open substacks, the forgetful morphism $\mathfrak{M}(\tilde{\mathcal{X}},\beta') \rightarrow \mathfrak{M}(\mathcal{X},\beta)$ is idealized log \'etale, with puncturing ideals as defined in \cite{punc} Definition $2.42$ and $2.49$. 
%\end{proposition}
%
%\begin{proof}
%Recalling the definition of idealized log \'etale, we consider a commutative diagram of solid arrows in the category of idealized log stacks:
%
%\[\begin{tikzcd}
%T_0 \arrow{r}\arrow{d} & \mathfrak{M}(\tilde{\mathcal{X}},\beta')\arrow{d}\\
%T \arrow{r}\arrow[ru,dashed] & \mathfrak{M}(\mathcal{X},\beta)
%\end{tikzcd}.\]
%In the above diagram $T_0 \rightarrow T$ is an idealized strict closed embedding defined by a square zero ideal $J$ over $T$. We wish to find a unique dashed arrow in the category of idealized log stacks making the diagram commute. We note that the horizontal morphisms give families of punctured log curves $C/T$ and $C_0/T_0$ and punctured log maps $f: C \rightarrow \mathcal{X}$, $f_0: C_0\rightarrow \tilde{\mathcal{X}}$. We note that $\Sigma(C) = \Sigma(C_0)$, hence we have a morphism $\Sigma(C) \rightarrow \Sigma(\tilde{X})$, and therefore a unique lift $C \rightarrow \tilde{\mathcal{X}}$. 
%\end{proof}

In the following, we note how the above corollary determines how punctured Gromov-Witten invariants behave under a log \'etale base change, which may be useful in studying semistable degenerations of varieties:

Suppose $X \rightarrow \mathbb{A}^1$ is a log smooth morphism, with $\mathbb{A}^1$ carrying its usual toric log structure, and consider the log \'etale morphism $z^k: \mathbb{A}^1\rightarrow \mathbb{A}^1$ induced by regular function $z^k \in \kk[z]$. We denote by $B$ and $B'$ the codomain and domain of $z^k$ respectively to differentiate. We now form the following fiber square:

\[\begin{tikzcd}
X' \arrow[r,"b_k"] \arrow{d} & X\arrow{d}\\
B' \arrow[r,"z^k"]& B
\end{tikzcd}\]

Observe that $X' \rightarrow X$ is a log \'etale morphism between log schemes. Moreover, the induced morphism $\Sigma(X') \rightarrow \Sigma(X)$ is cone wise given by a lattice refinement. In particular, $X' \rightarrow X$ factors through the root stack $X' \rightarrow X_k$ induced by the previous lattice refinement.  We wish to understand the log invariants of $X'$ in terms of the log invariants of $X$. To do so, suppose we have a realizable tropical type $\tau'$ of log stable map to $X'$. This induces a realizable tropical type $\tau$ of log stable map to $X$, and a morphism $st: \mathscr{M}(X',\tau') \rightarrow \mathscr{M}(X,\tau)$. Moreover, by identifying $\Sigma(X')$ with $\Sigma(X_k)$, $\tau'$ also defines a moduli stack $\mathscr{M}(X_k,\tau')$, and we have a morphism $i:\mathscr{M}(X',\tau') \rightarrow \mathscr{M}(X_k,\tau')$. Analogous morphisms also exist between the relative moduli stacks over the base $B$. We now state a corollary of the results of this section:

\begin{corollary}
With notation as above and $r = |coker(\tau^{'gp}_\NN \rightarrow \tau^{gp}_\NN)|$, we have $r \mid k$ and:

\begin{align*}
st_*([\mathscr{M}(X'/B',\tau')]^{vir}) &= \frac{k}{r}[\mathscr{M}(X/B,\tau)]^{vir}.\\
st_*([\mathscr{M}(X'/B,\tau')]^{vir}) &= \frac{1}{r}[\mathscr{M}(X/B,\tau)]^{vir}.\\
st_*([\mathscr{M}(X',\tau')]^{vir}) &= \frac{k}{r}[\mathscr{M}(X,\tau)]^{vir}.
\end{align*}

\end{corollary}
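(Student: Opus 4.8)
The plan is to reduce all three identities to Corollary \ref{birinvr} together with standard compatibilities of virtual pullback with strict base change. First I would establish the arithmetic divisibility $r \mid k$. By construction the morphism $z^k \colon B' \to B$ tropicalizes to the lattice refinement $\mathbb{R}_{\ge 0} \to \mathbb{R}_{\ge 0}$ of index $k$, and $X' \to X$ is the pullback of this along $X \to B$, so the induced refinement $\Sigma(X') \to \Sigma(X)$ has index $k$ cone by cone. The refinement $\tau' \to \tau$ on the tropical moduli side is determined by the minimal lattice refinements needed at the edges and legs of $G_\tau$ to lift tropical curves along $\Sigma(X') \to \Sigma(X)$; since each such edge-length constraint is a divisor of the constraint imposed by the global base change $z^k$, the index $r = |\mathrm{coker}(\tau'^{gp}_\NN \to \tau^{gp}_\NN)|$ divides $k$.

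Next I would treat the three formulas in turn. The relative statement over $B'$, namely
\[
st_*\bigl([\mathscr{M}(X'/B',\tau')]^{vir}\bigr) = \tfrac{k}{r}\,[\mathscr{M}(X/B,\tau)]^{vir},
\]
requires care because the base itself changes; here I would use that $\mathscr{M}(X/B',\tau)$ — the moduli over the ramified base $B'$ — is obtained from $\mathscr{M}(X/B,\tau)$ by the strict base change along $z^k$, under which virtual classes pull back with multiplicity $k$ (this is the index of $B' \to B$ and is visible already on the underlying toric cone). Applying Corollary \ref{birinvr} to the root stack modification relating $X'$ and the coarse moduli over $B'$ contributes the additional factor $1/r$, yielding $k/r$. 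For the second formula, $st_*([\mathscr{M}(X'/B,\tau')]^{vir}) = \tfrac{1}{r}[\mathscr{M}(X/B,\tau)]^{vir}$, no base change occurs — both sides are taken relative to $B$ — so this is exactly the content of Corollary \ref{birinvr} applied to the log étale modification $X' \to X$ over $B$ (equivalently, factoring through $X' \to X_k \to X$ and using that $b_k$ is log étale so the obstruction theories match via $T^{log}_{X'/B} = b_k^* T^{log}_{X/B}$). The third, absolute formula $st_*([\mathscr{M}(X',\tau')]^{vir}) = \tfrac{k}{r}[\mathscr{M}(X,\tau)]^{vir}$, follows by combining the first with the comparison of $[\mathscr{M}(X/B,\tau)]^{vir}$ and the absolute class $[\mathscr{M}(X,\tau)]^{vir}$, or more directly by the same two-step argument (strict base change contributing $k$, root stack contributing $1/r$) applied without the relative decoration.

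The main obstacle I anticipate is bookkeeping the base-change multiplicity cleanly: one must check that forming $\mathscr{M}(X/B',\tau)$ as a fine-saturated fiber product along the ramified, non-strict morphism $z^k$ multiplies the virtual class by exactly $k$ and not by some stacky correction, and that this interacts correctly with the fs-saturation that can introduce automorphisms. The safe route is to factor $z^k$ through its own root-stack refinement $B' \to B_k \to B$, apply Corollary \ref{birinvr} (or its evident base-only specialization) to the modification $B_k \to B$, and observe that $B' \to B_k$ is strict so contributes no correction beyond the lattice index; pulling back the universal situation along this factorization reduces everything to instances of the corollary already proved. The remaining verifications — compatibility of virtual pullback with the strict pieces, and that $st$ on coarse spaces is the composite of the maps appearing in Corollary \ref{birinvr} — are routine given the machinery of Section 5 and \cite{pbirinv}.
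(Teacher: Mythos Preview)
Your overall plan for the first and third equalities --- factor through the root stack $X_k$, pick up $1/r$ from Corollary \ref{birinvr} for $X_k \to X$, and a factor $k$ from the passage $X' \to X_k$ --- is the same as the paper's.  The paper implements the second step not by any ``strict base change'' heuristic (note $z^k$ is \emph{not} strict) but by applying the Costello--Herr--Wise pushforward theorem to the square
\[
\begin{tikzcd}
\mathscr{M}(X'/B',\tau') \arrow{r}\arrow{d} & \mathscr{M}(X_k/B,\tau')\arrow{d}\\
\mathfrak{M}(\mathcal{X}'/B',\tau') \arrow{r}& \mathfrak{M}(\mathcal{X}_k/B,\tau'),
\end{tikzcd}
\]
verifying directly that the bottom map is proper of pure degree $k$: the degree comes from the choice of $k^{\text{th}}$ root of the induced map $C \to B$, and properness from the valuative criterion using that valuation rings are integrally closed.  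Your ``safe route'' of factoring $B' \to B_k \to B$ amounts to the same thing, but you would still owe precisely this properness and degree check; it is not a formal consequence of anything already proved.  Your divisibility argument for $r \mid k$ is also too loose: the index $r$ is attached to the cone $\tau$, not to individual edge lengths, and the paper's argument identifies $\tau^{gp}_\NN/\tau'^{gp}_\NN$ with the image of $\Sigma(\pi)\circ h_v$ modulo $k\ZZ$ for a fixed vertex $v$, whence it is cyclic of order dividing $k$.

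There is a genuine gap in your treatment of the second equality.  You claim it is ``exactly the content of Corollary \ref{birinvr} applied to the log \'etale modification $X' \to X$ over $B$'', but $X' \to X$ is \emph{not} a root stack: $X' = X \times_B B'$ is a scheme, and Corollary \ref{birinvr} (together with the moduli construction of Section~5 that underlies it) is set up specifically for root stacks with twisted-curve domains.  The paper does not attempt to apply \ref{birinvr} to $X' \to X$; instead it deduces the second equality \emph{from the first} via a second Costello--Herr--Wise square comparing $\mathscr{M}(X'/B',\tau')$ with $\mathscr{M}(X'/B,\tau')$, whose bottom arrow is again degree $k$, so that $p_*[\mathscr{M}(X'/B',\tau')]^{vir} = k\,[\mathscr{M}(X'/B,\tau')]^{vir}$ and dividing the first equality by $k$ yields the second.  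Your route could in principle be repaired by extending Corollary \ref{birinvr} to arbitrary log \'etale modifications whose tropicalization is a lattice refinement, but that extension is not established in the paper and would require separate justification.
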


\begin{proof}
%We begin by showing the first equality holds by an application of the Costello-Herr-Wise pushforward theorem. To that end, consider the following cartesian diagram:
%
%\[\begin{tikzcd}
%\mathfrak{M}(\mathcal{X}'/B',\tau') \arrow{r}\arrow{d} & \mathfrak{M}(\mathcal{X}/B,\tau)\arrow{d}\\
%B' \arrow{r} B
%\end{tikzcd}\]

%We first observe that when $\tau'$ defines a relative tropical type over $B$, then either $r = 1$ or $k = r := |coker(\tau^{'gp}_\NN \rightarrow \tau^{gp}_\NN)| $.  Indeed, $\tau'$ must parameterize tropical curves which map entirely into fibers of the map $\Sigma(X') \rightarrow \mathbb{R}_{\ge 0}$. In particular, the tangent vector associated to each leg and edge of $G_{\tau'}$ must map to the zero tangent vector in $\mathbb{R}$. Thus, we must either all curves map to $0 \in \mathbb{R}_{\ge 0}$, in which $\tau'$ is isomorphic to $\tau$ and the index is $1$, or the image of $\tau'$ intersects $\mathbb{R}_{>0}$ and the index $r$ is $k$. 
%
%To restrict the index $r$ in the absolute situation, note that given any tropical map $f: \Gamma \rightarrow \Sigma(X)$ of type $\tau$, then by scaling all contact orders and vertex coordinates by $k$, denoted by $kf$ we ensure there exists a tropical map $\Gamma'\rightarrow \Sigma(X')$ of type $\tau'$ such that the resulting tropical map $\Gamma'\rightarrow \Sigma(X)$ is $kf$. Hence, we must have $r | k$. 
We first prove the claim $r \mid k$. Suppose we have an integral tropical map $h: \Gamma \rightarrow \Sigma(X)$ of type $\tau$, and let $v \in V(G_\tau)$ be a vertex. Then if $(h,\Gamma) \in \tau_\NN$ is in the image of $\tau'_\NN \rightarrow \tau_\NN$, we must have $\Sigma(\pi)h(v) \in k\NN \subset \NN$, with $\Sigma(\pi): \Sigma(X) \rightarrow \mathbb{R}_{\ge 0}$ the tropicalization of the map $\pi: X \rightarrow B$. On the other hand, if $\Sigma(\pi)h(v) \in k\NN$, then all other integral points of $\Gamma$ must also have image under $\Sigma(\pi)h$ contained in $k\NN$ since $G_\tau$ is connected and all contact orders of bounded edges of $G_\tau$ have images under $\pi$ given by integral tangent vectors in $k\ZZ$. Therefore, $(h,\Gamma) \in im(\tau'_\NN)$ if and only if $\Sigma(\pi)h(v) \in k\NN$. Hence, letting $h_v: \tau_\NN \rightarrow \Sigma(X)$ be the evaluation map at the vertex $v \in V(G_\tau)$, the quotient group $\tau_\NN^{gp}/\tau^{'gp}_\NN$ is isomorphic to the quotient $im(\Sigma(\pi)h_v)/(k\NN\cap im(\Sigma(\pi)h_v)$). Since both groups are cyclic groups, the quotient is cyclic of order dividing $k$.

We now show the equation $k[\mathscr{M}(X_k/B,\tau')]^{vir} = i_*([\mathscr{M}(X'/B',\tau')])^{vir}$. To do so, we will apply the Costello-Herr-Wise pushforward theorem of \cite{Cos} and \cite{pusherr}, in particular we need a cartesian squares relating the moduli stacks above. Let $\mathcal{X}' := \mathcal{A}_{X'}\times_{\mathcal{A}_\NN}B'$, and consider the commuting diagram:
\[\begin{tikzcd}
\mathscr{M}(X'/B',\tau') \arrow{r}\arrow{d} & \mathscr{M}(X_k/B,\tau')\arrow{d}\\
\mathfrak{M}(\mathcal{X}'/B',\tau') \arrow{r}& \mathfrak{M}(\mathcal{X}_k/B,\tau')
\end{tikzcd}\]

The claim that the above diagram is cartesian follows closely from the fact that $X'$ is defined via the following pullback diagram in all log categories:

\[\begin{tikzcd}
X' \arrow{r}\arrow{d} & X_k\arrow{d}\\
\mathbb{A}^1 \arrow{r} & \mathbb{A}^1/\mu_k
\end{tikzcd}\]

In particular, the data of a log stable map $C \rightarrow X'$ is the data of a log stable map $X_k$, together with a choice of $k^{th}$ root of the regular function $C \rightarrow \mathbb{A}^1$ induced by $C \rightarrow X_k$. Note in particular that $X' \rightarrow X_k$ is a strict log morphism. Moreover, since $B' \rightarrow B$ is log \'etale, we have $T_{X'/B'}^{log} = q^*T_{X_k/B}^{log}$ for $q: X' \rightarrow X_k$, hence the obstruction theory for the right vertical morphism pulls back to the obstruction theory for the left vertical morphism.

Consider the open substacks $\mathfrak{M}^\circ(\mathcal{X}_k/B,\tau')$ and $\mathfrak{M}^\circ(\mathcal{X}'/B',\tau')$ consisting of prestable log maps of type $\tau'$, and let $S \rightarrow \mathfrak{M}^\circ(\mathcal{X}_k/B,\tau')$ be a morphism. We claim that $S$ is in the image of $\mathfrak{M}^\circ(\mathcal{X}'/B',\tau')$. Indeed, the family of tropical maps $\Gamma \rightarrow \Sigma(X)$ associated to the morphism $\tau' \rightarrow \tau$ consists of tropical curves with edges and legs having integral lattices that already factor through the sublattice of interest. Hence, the resulting map $C/S \rightarrow \mathcal{X}$ is the middle horizontal morphism in Diagram \ref{maproot}. Since $\mathfrak{M}^\circ(\mathcal{X}'/B',\tau') \rightarrow \mathfrak{M}^\circ(\mathcal{X}_k/B,\tau')$ is degree $k$ onto its image, coming from the choice of $k^{th}$ root of the resulting map $C \rightarrow B$ induced by $C \rightarrow \mathcal{X}$, we find $\mathfrak{M}(\mathcal{X}'/B',\tau') \rightarrow \mathfrak{M}(\mathcal{X}_k/B,\tau')$ is degree $k$.

%  Recall from Proposition \ref{repfun} that any geometric point $\eta \rightarrow \mathfrak{M}^\circ(\mathcal{X}_k/B,\tau')$ has relative stabilizer over $\eta \rightarrow \mathfrak{M}^\circ(\mathcal{X}/B,\tau)$ the group $Spec\text{ }\kk[\mu_r]\times \eta$. These correspond to the automorphisms of the non-minimal log map $C \rightarrow \mathcal{X}$ given by the middle horizontal morphism of Diagram \ref{maproot} which are the identify on underlying stacks. As a result, we have $Stab([(f,C_\eta)]) \cong Stab([\overline{f},C_\eta]) \times \mu_r$ for $\overline{f}: C_\eta \rightarrow \mathcal{X}$ the map determined by $f$. Moreover, for any log curve $f':C_\eta' \rightarrow \mathcal{X}'$ lifting $f$, the induced morphism on stabilizer groups $Stab([f',C_\eta']) \rightarrow Stab([f,C_\eta])$ is injective with image the subgroup $Stab([f,C_\eta])$, since the extra data of a non-minimal map $C_\eta \rightarrow \mathcal{X}$ is not part of the definition of the morphism $\eta \rightarrow \mathfrak{M}^\circ(\mathcal{X}',\tau')$. Using these facts, we compute the desired fiber product of stacks to be $Spec\text{ }\kk[\mu_r\times \mu_k]\times \eta$, which has degree $rk$ over $\eta$. 
 In order to apply the Costello-Herr-Wise pushforward theorem, we must also show that the morphism is proper. First, by replacing the relevant moduli stacks with open substacks which contain the images of $\mathscr{M}(X'/B',\tau')$ and $\mathscr{M}(X_k/B,\tau')$ where necessary, we may assume the morphism is finite type. Hence, we may prove properness by proving the morphism satisfies the valuative criterion. To this end, suppose we have a valuation ring $R$ with field of fractions $K$, and morphisms $Spec\text{ }K \rightarrow \mathfrak{M}(\mathcal{X}'/B',\tau')$ and $Spec\text{ }R \rightarrow \mathfrak{M}(\mathcal{X}_k/B,\tau')$ making the obvious square commute via some chosen isomorphism of $K$-valued points of $\mathfrak{M}(\mathcal{X}_k/B,\tau')$. Note that contained in the data of the latter morphism is a prestable log curve $C_R$ over $Spec\text{ }R$, together with a non-basic map $f_R: C_R \rightarrow \mathcal{X}$ of type $\tau$. In particular, we have a regular map $Spec\text{ }R \rightarrow \mathbb{A}^1$ which pulls back to the map $C_R \rightarrow \mathbb{A}^1$ induced by $f_R$. 
 
 Letting $r \in R$ be the element corresponding to the former regular map, since $\mathcal{X}'$ is constructed as a fiber product in any log category, in order to lift $f_R$ to a morphism to $\mathcal{X}'$, it suffices to pick a $k^{th}$ root of $r$ in $R$. For existence, we note that $r$ has a $k^{th}$ root in the field $K$, and since valuations rings are integrally closed, $r$ has a $k^{th}$ root in $R$. Moreover, the morphism $Spec\text{ }K \rightarrow \mathfrak{M}(\mathcal{X}'/B',\tau')$ furnishes one such root. Thus, by the universal property defining $\mathcal{X}'$, there exists a pair of a map $C_R \rightarrow \mathcal{X}'$ and an isomorphism between the two resulting maps $C_R \rightarrow \mathcal{X}$ which is unique up to unique isomorphism. Moreover, the isomorphism realizing the lift is the unique such isomorphism which upon restriction to the dense open $Spec\text{ }C_K \rightarrow \mathcal{X}$ gives the isomorphism realizing the original commuting square. It follows from the valuative criterion that $\mathfrak{M}(\mathcal{X}'/B',\tau') \rightarrow \mathfrak{M}(\mathcal{X}_k/B,\tau')$ is proper. Hence,  the Costello-Herr-Wise pushforward theorem of \cite{Cos} and \cite{pusherr} allows us to conclude the desired equality of virtual classes. By Corollary \ref{birinvr}, we derive the first claimed equality of the corollary.

% In order to apply the Costello-Herr-Wise pushforward theorem, we must also show that the morphism is \emph{pure} in the sense of \cite{pusherr} Definition $2.3$. To do this, we let $S$ be the spectrum of a DVR with $s$ a closed point of $S$ and $f: S \rightarrow \mathfrak{M}(\mathcal{X}_k,\tau')$ be a morphism with $f(s)$ in the image of $\mathfrak{M}(\mathcal{X}',\tau')$. We need to show the base change $\mathfrak{M}(\mathcal{X}',\tau')\times_{\mathfrak{M}(\mathcal{X}_k,\tau')} S \rightarrow S$ is proper. Since $s$ is in the image of $\mathfrak{M}(\mathcal{X}',\tau')$, the twisted log curve $C_s$ must be an ordinary log curve. Since a family of possibly twisted curves degenerates to an untwisted curve only if the family consists of untwisted curves, we must have $S$ is in the image of $\mathfrak{M}(\mathcal{X}',\tau')$. Since $\mathfrak{M}^\circ(\mathcal{X}'/B',\tau')$ is finite onto its image, it follows that the morphism is proper after base change, hence is pure. Hence, the Costello-Herr-Wise pushforward theorem of \cite{Cos} and \cite{pusherr} allows us to conclude the desired equality of virtual classes. By Corollary \ref{birinvr}, we derive the first claimed equality of the corollary. 
 
 For the second equality, we observe we have the following cartesian diagram:
 
 \[\begin{tikzcd}
 \mathscr{M}(X'/B',\tau') \arrow[r,"p"]\arrow{d} & \mathscr{M}(X'/B,\tau')\arrow{d}\\
 \mathfrak{M}(\mathcal{X}'/B',\tau') \arrow{r} & \mathfrak{M}(\mathcal{A}_{X'} \times_{\mathcal{A}_\NN} B/B,\tau')
 \end{tikzcd}\]
 As before, the both vertical morphisms are equipped with obstruction theories which are related by pullback. The bottom morphism is finite of degree $k$, hence we have $p_*([\mathscr{M}(X'/B',\tau')]^{vir}) = k[\mathscr{M}(X'/B,\tau')]^{vir}$ and the second claimed equality of the corollary follows.
 
 For the final equality, we cannot assume $\tau'$ defines a tropical type of map to $X'$ relative to the projection $X' \rightarrow B'$. We nonetheless have maps $\mathscr{M}(X',\tau')\rightarrow B'$ and $\mathscr{M}(X_k,\tau') \rightarrow B$, since every regular function on a family of proper curves pulls back from the base. Letting $\mathcal{X}_k$ be the Artin fan of $X_k$ relative to $Spec\text{ }\kk$, have the following cartesian diagram:
 
 \[\begin{tikzcd}
 \mathscr{M}(X',\tau') \arrow[r,"i"] \arrow{d} & \mathscr{M}(X_k,\tau')\arrow{d}\\
 \mathfrak{M}(\mathcal{A}_{X'},\tau')\times B' \arrow{r} & \mathfrak{M}(\mathcal{X}_k,\tau') \times B
 \end{tikzcd}\]
 
Again, the vertical morphisms are equipped with obstruction theories which are related by pullback. By similar arguments to proving the first claimed equality of the corollary, the morphism $\mathfrak{M}(\mathcal{A}_{X'},\tau') \rightarrow \mathfrak{M}(\mathcal{X}_k,\tau')$ is pure degree $1$. From this, we deduce the bottom horizontal of the above cartesian diagram is pure degree $k$, and therefore $i_*([\mathscr{M}(X',\tau')]^{vir}) = k[\mathscr{M}(X_k,\tau')]^{vir}$. By an application of Corollary \ref{birinvr}, we derive the third claimed equality of the corollary.

%Note now that $\mathscr{M}(X'/B,\tau') \cong \mathscr{M}(X'/B',\tau')$, by the same argument used previously after replacing $X'$ with $\mathcal{X}'$. Since $B' \rightarrow B$ is log \'etale, the same argument which follows Proposition \ref{repfun} shows that the obstruction theories are identified by this isomorphism, and the obstruction theory for $\mathscr{M}(X'/B',\tau')$ is pulled back from $\mathscr{M}(X/B,\tau)$. Combining these equalities yields:
%\[st_*[\mathscr{M}(X'/B',\tau')]^{vir} = Forget_*([\mathscr{M}(X_k/B,\tau')]^{vir}) =\frac{1}{r} [\mathscr{M}(X/B,\tau)]^{vir}.\]

\end{proof}

%\begin{example}
%We remark that the index $r$ in the corollary can vary among multiples of $k$ tropical type $\tau'$, even outside the absolute setting. For instance, take $k = 2$, and consider the tropical target $\sigma'$ given by the $\mathbb{R}_{\ge 0}$ span of $(1,0)$ and $(1,2)$ in $\mathbb{R}^2$, with a map to $\mathbb{R}_{\ge 0}$ given by projection on the first coordinate. Consider the tropical type $\tau'$ which has two vertices constrained to map to the faces $\mathbb{R}_{\ge 0}(1,0)$ and $\mathbb{R}_{\ge 0}(1,2)$, and a single edge with integral tangent vector $(0,2)$. This is easily seen to be a realizable tropical type, and the cone $\tau'$ is isomorphic to $\mathbb{R}_{\ge 0}$. Now observe that $\sigma'$ as an integral cone is isomorphic to a coarsening of the cone $\sigma = \mathbb{R}_{\ge 0}((1,0),(1,1))$, with the image $\NN(1,0)\subset \sigma'_\NN$ in $\sigma_\NN$ given by $\NN(2,0)$. Letting $\tau$ be the tropical type of the tropical curve $\Gamma_{\tau'} \rightarrow \sigma$, we note that due to the non-primitive contact order associated with the unique edge of $\tau$, there cannot be an integral tropical curve of type $\tau$ with vertex not mapping into $\NN(2,0)$. In particular, the induced map $\tau' \rightarrow \tau$ is an isomorphism and $r = 1$ in this situation. 
%
%\end{example}

\section{Comparing Invariants}
To begin the comparison, we note that given a log smooth pair $(X,D)$, there exists a log \'etale modification $(\tilde{X},\tilde{D}) \rightarrow (X,D)$ such that $\tilde{X}$ is smooth and $\tilde{D} \subset \tilde{X}$ is a simple normal crossings divisor. Moreover, after noting that the proof of Corollary $1.6$ of \cite{pbirinv} does not use the fact that the invariants in question define an associative algebra, if the structure constants defining $R_{(\tilde{X},\tilde{D})}$ satisfy the relations stated in Corollary \ref{mcr1}, then so to do the log Gromov-Witten invariants giving the structure constants for $R^{log}_{(X,D)}$. Moreover, by birational invariance of ordinary log Gromov-Witten invariants i.e. Corollary 1.3.1 of \cite{bir_GW}, in order to prove Theorem \ref{mthm2}, it suffices to prove the theorem for a log \'etale modification. Thus, without loss of generality, we will assume throughout that $X$ is smooth and $D$ has simple normal crossings. 

In order to prove Theorem \ref{mthm1}, the most important case for us to consider will be when the numerical data defining a type $\beta$ used in defining a given structure constant is slope sensitive, with output contact order $r$ corresponds to a point on a $1$-dimensional cone of $\Sigma(X)$ i.e., the contact order corresponds to a divisorial valuation on $\kk(X)$. In fact, we will consider the target $X_r$, which is a root stack of $X$ induced by a lattice refinement of $\Sigma(X_r)$ of $ \Sigma(X)$ such that $r$ is a primitive lattice point in $\Sigma(X_r)$. Let $\beta$ be a tropical type of punctured log curve, with curve class $\textbf{A} \in NE(X)$ and $l$ input contact orders $p_1,\ldots,p_l$, and an output contact order $-r$ with $r \in B(\ZZ)$ associated with a marked point $x_{out}$. By the assumption that $\beta$ is a slope sensitive and Remark \ref{idloget}, there is a corresponding tropical lift $\beta'$ of $\beta$ to $X_r$ and an analogous moduli stacks $\mathfrak{M}^{ev}(\mathcal{X}_r,\beta')$. 

%We further assume that the ordinary fiber product $\mathfrak{M}^{ev}(\mathcal{X},\tau)\times_X z$ is non-empty, which is guaranteed if $r \in int(\pmb\sigma(v))$ for $v \in V(G_\tau)$ the vertex containing the leg with contact order $-r$. 

We now note that we may define the evaluation space $\mathscr{P}(X_r,r)$ analogously to $\mathscr{P}(X,r)$. Letting $\mathscr{D}_r$ be the stratum of $X_r$ associated with $r$, the underlying stack of $\mathscr{P}(X_r,r)$ is $\mathscr{D}_r\times B\mathbb{G}_m$, and the log structure is the sub log structure of the natural product log structure on the product defined as in Display \ref{logev}. Finally, after noting that the image of the section of the universal twisted curve over $\mathscr{M}(X_r,\beta)$ has trivial inertia, the universal twisted curve has an underlying stack theoretic section with fiberwise image the marked point above, and the proof of Proposition $3.3$ of \cite{int_mirror} applies to produce evaluation maps $ev_{x_{out}}: \mathscr{M}(X_r,\beta) \rightarrow \mathscr{P}(X_r,r)$ and $ev_{x_{out}}: \mathfrak{M}^{ev}(\mathcal{X}_r,\beta) \rightarrow \mathscr{P}(X_r,r)$.

For any $z \in D_r$, the proof of Proposition $3.8$ of \cite{int_mirror} can also be used to produce a morphism $z': W \rightarrow \mathscr{P}(X_r,r)$ which when composed with the morphism $\mathscr{P}(X_r,r) \rightarrow \mathscr{P}(X,r)$ maps to $z$. Moreover, since $r$ is a primitive point along a $1$-dimensional cone of $\Sigma(X_r)$, the morphism $z'$ is strict. In particular, the fiber products $\mathscr{M}(X_r,\beta',z) := \mathscr{M}(X_r,\beta') \times_{\mathscr{P}(X_r,r)} W$ and $\mathfrak{M}^{ev}(X_r,\beta',z) := \mathfrak{M}^{ev}(\mathcal{X}_r,\beta')\times_{\mathscr{P}(X_r,r)} W$ are independent of the choice of log category in which they are taken. For a realizable type $\tau$ marked by $\beta$ we also consider the analogous fiber products $\mathscr{M}(X,\tau,z)$ and $\mathfrak{M}^{ev}(\mathcal{X}_\tau,z)$. The morphism $\mathscr{M}(X_r,\beta',z) \rightarrow \mathfrak{M}^{ev}(\mathcal{X}_r,\beta',z)$ can be equipped with a perfect obstruction theory as in \cite{int_mirror} with respect to which $\mathscr{M}(X_r,\beta',z)$ virtually is pure dimension $0$. We define an invariant analogous to those that define the structure constants of $R_{(X,D)}$ by:

\[N_{p,q,r}^{\textbf{A}'} = deg [\mathscr{M}(X_r,\beta',z)]^{vir}\]

For $\pmb\tau$ a realizable decorated tropical type marked by $\beta$, we also consider the analogous moduli stack $\mathscr{M}(X,\pmb\tau,z) := \mathscr{M}(X,\pmb\tau) \times_{\mathscr{P}(X,r)} W$, which is equipped with a virtual fundamental class $[\mathscr{M}(X,\pmb\tau,z)]^{vir}$. We can express this invariant in the display above as an integral on a moduli stack without a point constraints:

\begin{lemma}\label{rpointconst}
Letting $r = km$ with $m$ a primitive integral point of $\Sigma(X)$ and $k \ge 1$, for $\beta'$ a tropical lift of a tropical type $\beta$ which is slope sensitive, we have the equalities:

\[[\mathscr{M}(X_r,\beta',z)]^{vir} = [\mathscr{M}(X_r,\beta')]^{vir}\cap ev_{x_{out}}^*(pt)\]
%\[deg [\mathscr{M}(X_r,\pmb\tau',z)]^{vir} = \int_{[\mathscr{M}(X_r,\pmb\tau')]^{vir}} ev^*(pt).\]
In the above equation, the morphism $ev_{x_{out}}$ is the underlying stack evaluation maps to the divisor $\mathscr{D}_r \subset X_r$. Moreover, for any $\alpha \in A^*(\mathscr{M}(X,\beta,z))$, the vanishing of the classes $\alpha\cap[\mathscr{M}(X,\pmb\tau,z)]^{vir}$ for all decorated tropical types $\pmb\tau$ marked by $\beta$ implies $[\mathscr{M}(X_r,\beta')]^{vir}\cap (ev^*_{x_{out}}(pt)\cup \alpha) = 0$.

\end{lemma}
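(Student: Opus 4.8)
The plan is to prove the two displayed assertions in turn, deducing the second from the first together with the tropical decomposition of the refined virtual class and Corollary~\ref{birinvr}.

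\emph{First equality.} I would derive this from the compatibility of virtual pullback with lci pullback in the formalism of \cite{vpull}. The decisive point is that, because $r$ is a primitive lattice point on a $1$-dimensional cone of $\Sigma(X_r)$, the morphism $z'\colon W\to\mathscr{P}(X_r,r)$ is strict; hence the fine and saturated fibre products $\mathscr{M}(X_r,\beta',z)$ and $\mathfrak{M}^{ev}(\mathcal{X}_r,\beta',z)$ agree with the ordinary fibre products, and on underlying stacks the $B\mathbb{G}_m$-factor cancels to give $\mathscr{M}(X_r,\beta',z)\cong\mathscr{M}(X_r,\beta')\times_{\mathscr{D}_r}\{z\}$, the fibre product of the underlying evaluation map with the inclusion of the point $z$. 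Since $\mathscr{D}_r$ is a smooth Deligne--Mumford stack, $\{z\}\hookrightarrow\mathscr{D}_r$ is a regular embedding of codimension $\dim\mathscr{D}_r$, and the perfect obstruction theory of $\mathscr{M}(X_r,\beta',z)\to\mathfrak{M}^{ev}(\mathcal{X}_r,\beta',z)$ is pulled back from that of $\mathscr{M}(X_r,\beta')\to\mathfrak{M}^{ev}(\mathcal{X}_r,\beta')$; so the functoriality of virtual pullbacks gives $[\mathscr{M}(X_r,\beta',z)]^{vir}=ev_{x_{out}}^{!}[\mathscr{M}(X_r,\beta')]^{vir}$. By the dimension formula~\eqref{vdimorb} this class has dimension $\dim\mathscr{D}_r$, so the Gysin pullback along $\{z\}$ is simply $-\cap ev_{x_{out}}^{*}(pt)$, yielding a class of dimension $0$. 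The same argument applies verbatim with $\beta'$ replaced by any realizable tropical lift $\tilde\tau$.

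\emph{Reduction of the second assertion.} Using the first equality, I would rewrite $[\mathscr{M}(X_r,\beta')]^{vir}\cap(ev_{x_{out}}^{*}(pt)\cup\alpha)=[\mathscr{M}(X_r,\beta',z)]^{vir}\cap st^{*}\alpha$, where $st\colon\mathscr{M}(X_r,\beta',z)\to\mathscr{M}(X,\beta,z)$ is the coarsification morphism (a twisted punctured log map to $X_r$ maps to its coarse punctured log map to $X$) and $\alpha$ is pulled back along $st$. The slope sensitivity hypothesis and Remark~\ref{idloget} make $\mathfrak{M}(\mathcal{X}_r,\beta')$ virtually equidimensional with reduction stratified by the realizable tropical lifts $\tilde\tau$ of the decorated realizable types $\pmb\tau$ marked by $\beta$; virtual pullback is compatible with this stratification, and combining with the first equality applied to each $\tilde\tau$ gives $[\mathscr{M}(X_r,\beta',z)]^{vir}=\sum_{\tilde\tau}[\mathscr{M}(X_r,\tilde\tau,z)]^{vir}$. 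It therefore suffices to show $[\mathscr{M}(X_r,\tilde\tau,z)]^{vir}\cap st^{*}\alpha=0$ for each such lift.

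\emph{The per-stratum vanishing.} Here I would use that $st\colon\mathscr{M}(X_r,\tilde\tau)\to\mathscr{M}(X,\tau)$ is the base change of the log \'etale (root-stack) morphism $\mathfrak{M}_{\tilde{\mathcal{X}},\tilde\tau}\to\mathfrak{M}(\mathcal{X},\tau)$ along $\mathscr{M}(X,\tau)\to\mathfrak{M}(\mathcal{X},\tau)$, carrying the pulled-back obstruction theory since $T^{log}_{X_r}=c^{*}T^{log}_X$; stratum by stratum $st$ is a gerbe banded by a finite abelian group of the order $r_{\tilde\tau}$ appearing in Corollary~\ref{birinvr} (the coarse map determines the lift up to the $\mu_{r_{\tilde\tau}}$ of choices of root). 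Pushing forward along $st$ and combining the projection formula, the compatibility of the Gysin pullback of the first equality with proper pushforward (here one uses that $st$ intertwines the two evaluation maps), and Corollary~\ref{birinvr}, I obtain $st_{*}\big([\mathscr{M}(X_r,\tilde\tau,z)]^{vir}\cap st^{*}\alpha\big)=\tfrac{1}{r_{\tilde\tau}}\big([\mathscr{M}(X,\pmb\tau,z)]^{vir}\cap\alpha\big)=0$ by hypothesis. Summing over $\tilde\tau$ shows that $st_{*}$ of $[\mathscr{M}(X_r,\beta')]^{vir}\cap(ev_{x_{out}}^{*}(pt)\cup\alpha)$ vanishes in $A_{*}(\mathscr{M}(X,\beta,z))$, which already suffices for the downstream applications to vanishing of orbifold invariants; and since $st$ is a gerbe on each stratum, $st_{*}$ is injective on Chow groups with $\mathbb{Q}$-coefficients, so the class itself vanishes.

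\emph{Expected difficulty.} The main obstacle is making the last two steps precise: one must establish the decomposition of the refined virtual class over realizable lifts and its compatibility with both the Gysin pullback of the first equality and the point constraint, and the comparison with Corollary~\ref{birinvr} has to be routed through the first equality, because the point constraint is strict on the $X_r$-side but only a (multiplicity-introducing) fine and saturated fibre product on the $X$-side. Pinning down the exact constant relating $ev_{x_{out}}^{*}(pt)\cap[\mathscr{M}(X,\tau)]^{vir}$ to $[\mathscr{M}(X,\pmb\tau,z)]^{vir}$ is unnecessary, as only vanishing is claimed.
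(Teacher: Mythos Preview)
Your argument for the first equality is essentially the paper's: use strictness of $z'\colon W\to\mathscr{P}(X_r,r)$ so that the fs fibre product is the ordinary one, then invoke functoriality of virtual pullback (the paper cites \cite{vpull} Theorem~4.9) for the triple $\mathscr{M}(X_r,\beta',z)\to\mathscr{M}(X_r,\beta')\to\mathfrak{M}^{ev}(\mathcal{X}_r,\beta')$ to identify the constrained virtual class with the Gysin pullback along $\{z\}\hookrightarrow\mathscr{D}_r$.

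For the second claim your outline is close to the paper's in spirit but has a genuine gap in the per-stratum step. You assert
\[
st_*\big([\mathscr{M}(X_r,\tilde\tau,z)]^{vir}\cap st^*\alpha\big)=\tfrac{1}{r_{\tilde\tau}}\big([\mathscr{M}(X,\pmb\tau,z)]^{vir}\cap\alpha\big)
\]
by combining Corollary~\ref{birinvr} with compatibility of Gysin pullback and proper pushforward. But that compatibility runs through the \emph{ordinary} fibre product $\mathscr{M}(X,\tau)\times_{\underline{\mathscr{P}(X,r)}}\underline{W}$, not the fs fibre product $\mathscr{M}(X,\pmb\tau,z)$ in which $\alpha$ lives: on the $X$-side $r=km$ is not primitive, $W\to\mathscr{P}(X,r)$ is not strict, and the first equality does not apply there. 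The paper bridges this by introducing the comparison map
\[
s\colon\mathfrak{M}^{ev}(\mathcal{X},\tau,z)\longrightarrow\mathfrak{M}^{ev}(\mathcal{X},\tau)\times_{\mathscr{P}(X,r)}W
\]
between the fs and coherent fibre products, observes that $s$ is finite and surjective because $W\to\mathscr{P}(X,r)$ is integral, and hence obtains $z_*s_*[\mathfrak{M}^{ev}(\mathcal{X},\tau,z)]=l\,[\mathfrak{M}^{ev}(\mathcal{X},\tau)]\cap ev_{x_{out}}^*(pt)$ for some $l>0$. Only after this does Corollary~\ref{birinvr} (applied to the \emph{unconstrained} moduli, capped with $ev^*(pt)$) link the two sides; the unknown positive constants $i,l$ are harmless since only vanishing is claimed. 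Your ``expected difficulty'' paragraph correctly flags this issue, but the argument as written does not resolve it. A second minor point: rather than decomposing $[\mathscr{M}(X_r,\beta',z)]^{vir}$ and appealing to injectivity of $st_*$, the paper concludes by invoking Corollary~\ref{vanishing} (extended to root stacks), which directly says that vanishing of $[\mathscr{M}(X_r,\tau')]^{vir}\cap(ev^*(pt)\cup\alpha)$ for all realizable $\tau'$ forces the same vanishing for $\beta'$; this avoids having to justify the stratum decomposition and the injectivity claim.
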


\begin{proof}
Throughout, we abuse notation and always let $z$ refer to the morphism from the point constrained moduli space to the unconstrained moduli space. Recall that the underlying stack of the point constrained moduli stacks are ordinary fiber products. As a result, the claimed formula now follows if we show the following equality:
\begin{equation}\label{pcasint}
z_*([\mathscr{M}(X_r,\beta')\times_{X_r}z]^{vir}) = [\mathscr{M}(X_r,\beta')]^{vir}\cap ev_{x_{out}}^*(pt).
\end{equation}
%Since the obstruction theory for $\mathscr{M}(X_r,\beta')\times_{X_r}z$ is pulled back from $\mathscr{M}(X_r,\beta')$, Proposition $4.1$ of \cite{vpull} implies $z_*([\mathscr{M}(X_r,\beta')\times_{X_r}z]^{vir})$ is the virtual pullback of 
%\[z_*([\mathfrak{M}^{ev}(\mathcal{X}_r,\beta')\times_{X_r} z]) = ev_{x_{out}}^*(pt) \cap [\mathfrak{M}^{ev}(\mathcal{X}_r,\beta')].\] 
%Letting $f: \mathscr{M}(X_r,\beta') \rightarrow \mathfrak{M}^{ev}(\mathcal{X}_r,\beta')$, the desired equality is equivalent to the claim: 
%\[f^!(ev^*_{x_{out}}([pt])\cap [\mathfrak{M}^{ev}(\mathcal{X}_r,\beta')]) = ev_{x_{out}}^*([pt]) \cap [\mathscr{M}(X_r,\beta')]^{vir}.\]
 To see this claim, we let $f: \mathscr{M}(X_r,\beta',z) \rightarrow \mathfrak{M}^{ev}(\mathcal{X}_r,\beta',z)$, and note that we have the following triple of morphisms which comes equipped with a compatible triple of obstruction theories:

\[\mathscr{M}(X_r,\beta',z) \rightarrow \mathscr{M}(X_r,\beta') \rightarrow \mathfrak{M}^{ev}(\mathcal{X}_r,\beta').\]

The obstruction theory for the left most arrow is pulled back from that of $\mathfrak{M}^{ev}(\mathcal{X}_r,\beta',z) \rightarrow \mathfrak{M}^{ev}(\mathcal{X}_r,\beta')$, which itself is pulled back from the natural obstruction theory for $z: Spec\text{ }\kk \rightarrow \mathscr{D}_r$. In particular, by Theorem $4.9$ of \cite{vpull}, we have:

\[[\mathscr{M}(X_r,\beta')\times_{X_r}z]^{vir} = f^!([\mathfrak{M}^{ev}(\mathcal{X}_r,\beta',z)]) = f^!(z^*([\mathfrak{M}^{ev}(\mathcal{X}_r,\beta')])) = z^*([\mathscr{M}(X_r,\beta')]^{vir}).\]
By pushing forward the equality along $z$, and the equality $z_*z^*(A) = A \cap ev_{x_{out}}^*(pt)$ for any $A \in A_*(\mathscr{M}(X,\beta'))$, we conclude Equation \ref{pcasint}.

To see the final claim, note that by Corollary \ref{birinvr} we have 
\[ip_*([\mathscr{M}(X_r,\pmb\tau')]^{vir})\cap ev_{x_{out}}(pt) = [\mathscr{M}(X,\pmb\tau)]^{vir}\cap ev_{x_{out}}(pt),\]
for some $i > 0$. Moreover, letting $s: \mathfrak{M}^{ev}(\mathcal{X},\tau,z) \rightarrow \mathfrak{M}^{ev}(\mathcal{X},\tau) \times_{\mathscr{P}(X,r)} W$ be the map between the fs and coherent fiber products, then since $W \rightarrow \mathscr{P}(X,r)$ is an integral morphism, $s$ is finite and surjective. Hence, there exists $l> 0$ such that: 
\[z_*s_*[\mathfrak{M}^{ev}(\mathcal{X},\tau,z)] = l[\mathfrak{M}^{ev}(\mathcal{X},\tau)]\cap ev_{x_{out}}^*(pt) = ilp_*([\mathfrak{M}^{ev}(\mathcal{X}_r,\tau')])\cap ev^*_{x_{out}}(pt)\]
In particular, by taking virtual pullback of the above equation and capping with the class $\alpha$, we find $\alpha\cap[\mathscr{M}(X,\pmb\tau,z)]^{vir} = 0$ only if $[\mathscr{M}(X_r,\tau')]^{vir}\cap (ev_{x_{out}}^*(pt)\cup \alpha) = 0$. Finally, by Corollary \ref{vanishing}, which also holds in the setting of punctured maps to a root stack, the latter vanishing for all types $\tau'$ implies [$\mathscr{M}(X_r,\beta')]^{vir} \cap (ev_{x_{out}}^*(pt)\cup \alpha) = 0$, as required. 
\end{proof}

We now remark that the log \'etale invariance of the numbers $N_{p,q,r}^{\textbf{A}}$ demonstrated in the proof \cite{pbirinv} Corollary $1.6$ generalizes to the context of root stacks:

\begin{proposition}\label{altstconst}
Letting $\beta'$ be a slope sensitive tropical type which pushes forward to the tropical type $\beta$ associated to a structure constant $N_{p,q,r}^{\textbf{A}}$, then we have the equality:

\[N_{p,q,r}^{\textbf{A}} = N_{p,q,r}^{\textbf{A}'}\]

\end{proposition}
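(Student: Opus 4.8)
The plan is to prove Proposition \ref{altstconst} by exhibiting the structure constant $N_{p,q,r}^{\textbf{A}}$ as a degree of a virtual class on the point-constrained log moduli space, and then showing this is unchanged after passing to the root stack $X_r$. The key tool is Corollary \ref{birinvr} (birational invariance of punctured log invariants under root stacks), which we have already extended to the setting of twisted punctured log maps; combined with Lemma \ref{rpointconst}, which rewrites the point-constrained invariants as unconstrained integrals of $ev_{x_{out}}^*(pt)$, the argument becomes a matter of comparing $[\mathscr{M}(X_r,\beta')]^{vir}$ and $[\mathscr{M}(X,\beta)]^{vir}$ under the stabilization map $st\colon \mathscr{M}(X_r,\beta') \to \mathscr{M}(X,\beta)$.

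First I would reduce to showing
\[
st_*\big([\mathscr{M}(X_r,\beta')]^{vir}\cap ev_{x_{out}}^*(pt)\big) = [\mathscr{M}(X,\beta)]^{vir}\cap ev_{x_{out}}^*(pt).
\]
By Lemma \ref{rpointconst}, the left-hand side computes $N_{p,q,r}^{\textbf{A}'}$ (after passing through the point constraint $z$) and the right-hand side computes $N_{p,q,r}^{\textbf{A}}$, so the Proposition follows once this identity is established. The subtlety is that $\beta$ is typically \emph{not} realizable, so $\mathscr{M}(X,\beta)$ need not carry a canonical virtual class; here one uses the slope-sensitivity hypothesis and Remark \ref{idloget}, so that $[\mathscr{M}(X,\beta)]^{vir}$ is the refined class of Theorem \ref{neglogorb}, which is compatible with the stratification by realizable types $\tau$ marked by $\beta$. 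Working stratum by stratum, each $\mathscr{M}(X,\tau)$ is equidimensional, and Corollary \ref{birinvr} gives $st_*([\mathscr{M}(X_r,\tau')]^{vir}) = \tfrac{1}{r_\tau}[\mathscr{M}(X,\tau)]^{vir}$ for the appropriate multiplicity $r_\tau = |\mathrm{coker}(\tilde{\tau}^{gp}\to\tau^{gp})|$.

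Second I would handle the bookkeeping of the multiplicities $r_\tau$ against the normalization in the definition of the orbifold-style invariant $N_{p,q,r}^{\textbf{A}'}$. Because $r$ is a primitive point on a one-dimensional cone of $\Sigma(X_r)$, the evaluation morphism $z'\colon W \to \mathscr{P}(X_r,r)$ is strict (as noted in the preamble to the Proposition), so the point constraint on $X_r$ contributes no extra factor, whereas on $X$ one must account for the non-strictness of $z\colon W \to \mathscr{P}(X,r)$; this is precisely the content of the last paragraph of the proof of Lemma \ref{rpointconst}, and gives the factor that cancels $r_\tau$. Summing over all realizable $\tau$ marked by $\beta$ and using that $[\mathscr{M}(X,\beta)]^{vir} = \sum_\tau [\mathscr{M}(X,\tau)]^{vir}$ on the level of the refined class (slope sensitivity), the multiplicities match up and the displayed identity follows.

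The main obstacle, I expect, is the virtual-class compatibility across the stratification: ensuring that the refined class $[\mathscr{M}(X_r,\beta')]^{vir}$ really does decompose as the sum of the $[\mathscr{M}(X_r,\tau')]^{vir}$ in a way that is compatible, under $st_*$, with the analogous decomposition downstairs — this requires that slope sensitivity of $\beta$ implies slope sensitivity of $\beta'$ (so that the refined class on $X_r$ exists and is well-behaved), and that the obstruction theories on $\mathscr{M}(X_r,\beta')$ and $\mathscr{M}(X,\beta)$ are genuinely related by the pullback along the log étale map $X_r \to X$, which is exactly the identity $R\pi_*f^*T_{\tilde X/B}^{log} = R\pi_* f^* c^* T_{X/B}^{log}$ recorded after Proposition \ref{repfun}. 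Once this compatibility is in place, the rest is the multiplicity bookkeeping above, and invoking \cite{pbirinv} Corollary $1.6$ as a template for organizing the argument.
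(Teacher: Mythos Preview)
Your reduction has a genuine gap. You assert that, by Lemma \ref{rpointconst}, the right-hand side $[\mathscr{M}(X,\beta)]^{vir}\cap ev_{x_{out}}^*(pt)$ computes $N_{p,q,r}^{\textbf{A}}$. But Lemma \ref{rpointconst} proves the analogue of this only for $X_r$, and it works precisely because the constraint $z'\colon W\to\mathscr{P}(X_r,r)$ is strict there. On $X$ the structure constant $N_{p,q,r}^{\textbf{A}}$ is defined as the degree of the virtual class on the \emph{fs} fibre product $\mathscr{M}(X,\beta,z)=\mathscr{M}(X,\beta)\times^{fs}_{\mathscr{P}(X,r)}W$, and since $W\to\mathscr{P}(X,r)$ is not strict when $r$ is not primitive, there is no a priori identification of this with the refined class capped with $ev_{x_{out}}^*(pt)$. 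The last paragraph of the proof of Lemma \ref{rpointconst} that you invoke only relates these two quantities up to an unspecified positive multiple $l$ depending on $\tau$; that suffices for the vanishing statement there but not for the numerical equality you need. Your claim that this factor ``cancels $r_\tau$'' is exactly what has to be proved, and you have not done so. In fact, the identity you would need on $X$ is essentially Corollary \ref{pointcon}, whose proof in the paper \emph{uses} Proposition \ref{altstconst}; so your route is circular as written.

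The paper sidesteps this by never passing through the unconstrained refined class on $X$. Instead it decomposes the \emph{point-constrained} classes $[\mathfrak{M}^{ev}(\mathcal{X},\beta,z)]$ and $[\mathfrak{M}^{ev}(\mathcal{X}_r,\beta',z)]$ directly into strata contributions indexed by realizable types, with explicit multiplicities $m_\tau/|Aut(\tau)|$ coming from Section 10 of \cite{pbirinv}. The new ingredient required to run this on the root-stack side is that $\mathfrak{M}^{ev}(\mathcal{X}_r,\beta',z)\to W$ is log smooth, which follows from the idealized log \'etale structure of $\mathfrak{M}(\mathcal{X}_r,\beta')\to\mathfrak{M}^{tw}$ noted in Remark \ref{idloget}. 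One then compares term by term, showing $N_{\pmb\tau}=N_{\pmb\tau'}$ via the root-stack analogue of Proposition 10.1 of \cite{pbirinv}, using that the log point constraint $W\to\mathscr{P}(X,r)$ uniquely lifts to $W\to\mathscr{P}(X_r,r)$. The multiplicity matching is thus handled at the level of the point-constrained strata decomposition, not via a cancellation against $r_\tau$ in an unconstrained pushforward.
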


\begin{proof}
%Letting $p: \mathfrak{M}^{ev}(\mathcal{X}_r,\beta',z) \rightarrow \mathfrak{M}^{ev}(\mathcal{X},\beta,z)$ be the map to the coarse curve, we will show $p$ is birational. Thus, let $\mathfrak{M}^{ev,\circ}(\mathcal{X},\beta,z)$ be the locus of curves with tropical type $\tau$ with $dim\text{ }\tau = 1$, and define $\mathfrak{M}^{ev,\circ}(\mathcal{X}_r,\beta',z)$ analogously. Suppose we have a scheme $S$ and a morphism $S \rightarrow \mathfrak{M}^{ev,\circ}(\mathcal{X},\beta,z)$. This gives a family of punctured log curves $C/S$, together with a log morphism $f: C \rightarrow \mathcal{X}$. As remarked in \cite{int_mirror} Lemma $3.23$, This point constraint $W \rightarrow \mathscr{P}(X,r)$ forces the tropicalization $\Sigma(f): \Sigma(C) \rightarrow \Sigma(X)$ to factor through $\Sigma(X_r)$. This implies the existence of a unique lift of $C \rightarrow \mathcal{X}$ to a map $C \rightarrow \mathcal{X}_r$. 

In light of Proposition \ref{repfun}, which is the analogue of Theorem $1.2$ \cite{pbirinv} in the setting of a root stack modification, the claim follows essentially from the proof of Corollary $1.6$ of \cite{pbirinv}. More precisely, letting $\mathfrak{M}^{ev}(\mathcal{X},\tau,z) = \mathfrak{M}^{ev}(\mathcal{X},\tau)\times_{\mathscr{P}(X,r)} W$ and $i_{\tau}: \mathfrak{M}^{ev}(\mathcal{X},\tau,z) \rightarrow \mathfrak{M}^{ev}(\mathcal{X},\beta,z)$, recall from Section $10$ of \cite{pbirinv} that we have a decomposition:
\[[\mathfrak{M}^{ev}(\mathcal{X},\beta,z)] = \sum \frac{m_\tau}{|Aut(\tau)|} i_{\tau*}([\mathfrak{M}(\mathcal{X},\tau,z)_{red}]).\]
The multiplicity $m_\tau$ above is $|coker(ev_{\tau}^{*,gp}: \ZZ \rightarrow \ZZ)|$, where $\tau$ is a minimal tropical type marked by $\beta$, necessarily satisfying $dim\text{ }\tau = 1$, and $ev_{\tau}: \tau_\NN\times_{\pmb\sigma(v)_\NN} \NN \rightarrow \NN$ is the evaluation morphism at the vertex $v$ containing the output leg of slope $-r$. By taking the virtual pullback and degree of the above equation, we have a decomposition $N_{p,q,r}^\textbf{A} = \sum_{\pmb\tau} N_{\pmb\tau}$, with $N_{\pmb\tau}$ defined as the degree of the contribution to $[\mathscr{M}(X,\beta,z)]^{vir}$ from decorated type $\pmb\tau$. 

To produce the analogous decomposition of $N_{p,q,r}^{\textbf{A}'}$, the main ingredient missing in order to apply the argument given in Section $10$ of \cite{pbirinv} is $\mathfrak{M}^{ev}(\mathcal{X}_r,\beta',z) \rightarrow W$ being log smooth. After recalling from remark \ref{idloget} that $\mathfrak{M}(\mathcal{X}_r,\beta') \rightarrow \mathfrak{M}^{tw}$ is idealized log \'etale, and that $\mathfrak{M}^{tw}$ is log smooth as follows from \cite{ltcur} Theorem $1.10$, slight modifications made to the proofs of Propositions $3.14$, Theorem $3.16$, and Lemma $3.17$ of \cite{int_mirror} show that $\mathfrak{M}^{ev}(\mathcal{X}_r,\beta',z) \rightarrow W$ is log smooth.

% To see this, recall from \cite{int_mirror} Theorem $3.16$ that $\mathfrak{M}^{ev}(\mathcal{X},\beta) \rightarrow \mathscr{P}(X,r)$ is log smooth. Moreover, by Proposition \ref{idloget}, $\mathfrak{M}^{ev}(\tilde{\mathcal{X}},\beta') \rightarrow \mathfrak{M}^{ev}(\mathcal{X},\beta)$ is idealized log \'etale. Hence, $\mathfrak{M}^{ev}(\tilde{\mathcal{X}},\beta') \rightarrow \mathscr{P}(X,r)$ is idealized log smooth. Since $\mathscr{P}(\tilde{X}_r,r) \rightarrow \mathscr{P}(X,r)$ is idealized log \'etale, it follows that $\mathfrak{M}^{ev}(\mathcal{X}_r,\beta')\rightarrow \mathscr{P}(X_r,r)$ is idealized log smooth. By the definition of the idealized log structure on $\mathfrak{M}^{ev}(\mathcal{X}_r,\beta')$, the former morphism is also idealized strict. Therefore, $\mathfrak{M}^{ev}(\mathcal{X}_r,\beta')\rightarrow \mathscr{P}(X_r,r)$ is log smooth. Finally, $\mathfrak{M}^{ev}(\mathcal{X}_r,\beta',z)\rightarrow W$ is log smooth by base change, as required.

Using the stratification of $\mathfrak{M}(\mathcal{X}_r,\beta')_{red}$ by the stacks $\mathfrak{M}(\mathcal{X}_r,\tau')$ remarked upon in Remark \ref{idloget}, the argument from Section $10$ of \cite{pbirinv} now applies to produce the analogous decompositions:

\[[\mathfrak{M}^{ev}(\mathcal{X}_r,\beta',z)] = \sum_{\pmb\tau'} \frac{m_{\tau'}}{|Aut(\tau')|} i_{\tau^{'*}}([\mathfrak{M}(\mathcal{X}_r,\tau',z)_{red}]).\]

Virtual pullback of the above equation to $A_*(\mathscr{M}(X,\beta',z'))$ produces a decomposition $N_{p,q,r}^{\textbf{A}'} = \sum_{\pmb{\tau}'} N_{\pmb\tau'}$ as before. The main claim now reduces to showing $N_{\pmb\tau} = N_{\pmb\tau'}$, which is the analogue of Proposition $10.1$ of \cite{pbirinv} in the root stack setting. Using the fact  that the log point constraint $z: W \rightarrow \mathscr{P}(X,r)$ uniquely determines a point constraint $W \rightarrow \mathscr{P}(X_r,r)$ together with the the expression of $\mathscr{M}(X_r,\pmb\tau')$ as a fiber product implied by Proposition \ref{repfun}, the proof of Proposition $10.1$ of loc. cit. applies in the present context to prove the desired equality. 

\end{proof}

\begin{corollary}\label{pointcon}
Letting $z: \mathscr{M}(X,\beta,z) \rightarrow \mathscr{M}(X,\beta)$ be the projection from the fiber product and $ev: \mathscr{M}(X,\beta) \rightarrow D_r$ the evaluation map, we have:
\[z_*([\mathscr{M}(X,\beta,z)]^{vir}) = kp_*([\mathscr{M}(X_r,\beta')]^{vir})\cap ev^*(pt).\]

\end{corollary}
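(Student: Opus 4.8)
The plan is to deduce the identity from Proposition \ref{altstconst} together with the identities established inside the proofs of Lemma \ref{rpointconst} and Proposition \ref{altstconst}, and a short cycle computation on the gerbe $\mathscr D_r \to D_r$. Write $r = km$ with $m$ a primitive integral generator of its ray, so that $r$ is primitive in $\Sigma(X_r)$ and the root stack morphism $X_r \to X$ restricts over $D_r$ to a $\mu_k$-gerbe $q\colon \mathscr D_r \to D_r$. Let $p\colon \mathscr M(X_r,\beta') \to \mathscr M(X,\beta)$ denote the coarsening morphism of Proposition \ref{altstconst}; since the strict log point constraint $z'\colon W\to\mathscr P(X_r,r)$ covers the constraint $W\to\mathscr P(X,r)$, it lifts to a morphism $p\colon \mathscr M(X_r,\beta',z)\to\mathscr M(X,\beta,z)$ of point-constrained moduli (the target fibre product being fs already on the $X_r$ side), the square with the two forgetful projections $z$ commutes, and $q\circ ev_{x_{out}} = ev\circ p$ with $ev_{x_{out}}\colon\mathscr M(X_r,\beta')\to\mathscr D_r$.

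The first and main step would be to upgrade Proposition \ref{altstconst} from the degree equality $N_{p,q,r}^{\textbf A} = N_{p,q,r}^{\textbf A'}$ to the cycle identity $p_*[\mathscr M(X_r,\beta',z)]^{vir} = [\mathscr M(X,\beta,z)]^{vir}$. This does not follow formally from the equality of degrees, but it follows by rerunning the proof of Proposition \ref{altstconst} at the level of cycles: decompose both virtual classes over the realizable decorated types $\pmb\tau$, $\pmb\tau'$ as in that proof, and apply the root-stack analogue of Proposition $10.1$ of \cite{pbirinv} — whose mechanism is the fibre-product description of $\mathscr M(X_r,\pmb\tau',z)$ and $\mathscr M(X,\pmb\tau,z)$ supplied by Proposition \ref{repfun} together with a Costello--Herr--Wise pushforward — so as to conclude that $p_*$ of the contribution of each $\pmb\tau'$ equals the contribution of $\pmb\tau$, using that $\mathfrak M(\mathcal X_r,\beta')\to\mathfrak M^{tw}$ is idealized log \'etale (Remark \ref{idloget}), that the relevant obstruction theories are pulled back from one another, and that the log point constraint determines the orbifold point constraint uniquely. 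Summing over the finitely many types gives the displayed cycle identity.

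The remaining ingredients are already available. On the root stack side, equation \ref{pcasint} in the proof of Lemma \ref{rpointconst} gives $z_*[\mathscr M(X_r,\beta',z)]^{vir} = [\mathscr M(X_r,\beta')]^{vir}\cap ev_{x_{out}}^*(pt)$. And since a geometric point of the $\mu_k$-gerbe $\mathscr D_r$ factors as a degree-$k$ \'etale cover of the residual gerbe $B\mu_k\hookrightarrow\mathscr D_r$, one has $ev_{x_{out}}^*(pt) = k\,p^*ev^*(pt)$ as classes on $\mathscr M(X_r,\beta')$; equivalently, this is the non-primitivity $r = km$ of the output contact order. Assembling these with the commuting square of $z$'s and $p$'s and the projection formula,
\[
z_*[\mathscr M(X,\beta,z)]^{vir} = z_*p_*[\mathscr M(X_r,\beta',z)]^{vir} = p_*z_*[\mathscr M(X_r,\beta',z)]^{vir} = p_*\bigl([\mathscr M(X_r,\beta')]^{vir}\cap ev_{x_{out}}^*(pt)\bigr),
\]
and the right-hand side equals $k\,p_*\bigl([\mathscr M(X_r,\beta')]^{vir}\cap p^*ev^*(pt)\bigr) = k\,p_*[\mathscr M(X_r,\beta')]^{vir}\cap ev^*(pt)$, which is the claim. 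Taking degrees recovers $N_{p,q,r}^{\textbf A} = N_{p,q,r}^{\textbf A'}$, the factor $k$ cancelling the $\tfrac1k$ in $p^*ev^*(pt) = \tfrac1k\,ev_{x_{out}}^*(pt)$, which serves as a consistency check.

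I expect the main obstacle to be the first step. The proof of Proposition \ref{altstconst}, and of Proposition $10.1$ of \cite{pbirinv} which it invokes, is phrased in terms of the numbers $N_{\pmb\tau}$, i.e.\ degrees, so one must verify that the underlying identification of moduli stacks — the fibre-product description plus the Costello--Herr--Wise pushforward — in fact produces the equality of cycles and not merely of their degrees, after which summing over the realizable decorated types marked by $\beta$ yields the identity. The only other point requiring care is the gerbe computation: that the discrepancy between a geometric point of $\mathscr D_r$ and the pullback of a point of $D_r$ is exactly the factor $k$ — neither $1$ nor $k^2$ — which is itself pinned down by the degree consistency check above.
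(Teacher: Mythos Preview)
Your proposal is correct and follows essentially the same route as the paper's proof: both arguments combine the projection formula, the gerbe computation relating the point class on $\mathscr D_r$ to the pullback of the point class on $D_r$, and the identification supplied by Lemma~\ref{rpointconst} and Proposition~\ref{altstconst}. Your observation that Proposition~\ref{altstconst} must be read at the level of cycles rather than degrees is well placed --- the paper's proof invokes that proposition to conclude the cycle equality, implicitly relying on the same upgrade you spell out via the type-by-type decomposition and the fibre-product description of Proposition~\ref{repfun}.
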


\begin{proof}
Note that by the projection formula, we may compute the right hand side of the desired equation by pulling back the class $ev^*(pt)$ along the morphism $p$ and capping the resulting class with $[\mathscr{M}(X_r,\beta')]^{vir}$. This pullback can also be expressed by pulling back the class of a point along $\mathscr{D}_r \rightarrow D_r$ followed by pulling back along the evaluation map. The resulting class is $\frac{1}{k}ev^*([pt'])$, where $[pt']\in A^{dim\text{ }X}(X_r)$ is the class associated with a closed immersion $B\mu_r \rightarrow X_r$. Since the left hand side of the desired equation is equal to the integral on $\mathscr{M}(X_r,\beta')$ with insertion $ev^*([pt])$ by Proposition \ref{altstconst}, the desired equation follows.
\end{proof}

\begin{corollary}\label{logorb}
Let $\beta$ be a tropical type introduced at the start of this section with $r = km$ for $m\in B(\ZZ)$ a primitive integral point in a $1$-dimensional cone of $\Sigma(X)$ and total curve class $\textbf{A} \in NE(X)$, and suppose $X$ is smooth and slope sensitive with respect to $\mathscr{M}(X,\beta)$ in the sense of \cite{logroot}. Then for any $a \ge 0$, $\psi_{x_{out}}^a[\mathscr{M}(X,\pmb\tau,z)]^{vir} = 0$ for all decorated realizable types $\pmb\tau$ marked by $\beta$ only if $\langle\prod_i\psi_{x_i}^{a_i}[\alpha_i],\psi_{x_{out}}^a[pt]\rangle_{\textbf{A}} = 0$ for all choices of $a_i\ge 0$ and insertions $\alpha_i \in H^*(D_{p_i})$. Moreover, we have $N_{p_1,p_2,r}^\textbf{A} = N_{p_1,p_2,r}^{orb\textbf{A}}$.
\end{corollary}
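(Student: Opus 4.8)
\emph{Proof plan.} Both assertions will be obtained by transporting the relevant classes to the root stack $X_r$ and invoking the logarithmic/orbifold comparison there. First I would record the chain of identifications
\[
N_{p_1,p_2,r}^{\textbf{A}} \;=\; N_{p_1,p_2,r}^{\textbf{A}'} \;=\; \deg\,[\mathscr{M}(X_r,\beta',z)]^{vir}\;=\;\int_{[\mathscr{M}(X_r,\beta')]^{vir}} ev_{x_{out}}^*(pt),
\]
in which the first equality is Proposition \ref{altstconst} (log \'etale invariance of the structure constant under the root stack modification $X_r\to X$) and the last is Lemma \ref{rpointconst}, applicable because $\beta$, and hence $\beta'$, is slope sensitive; here $ev_{x_{out}}$ denotes the evaluation morphism of underlying stacks $\mathscr{M}(X_r,\beta')\to\mathscr{D}_r$.

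For the equality of structure constants I would then apply the log/orbifold correspondence to the root-stack target $X_r$. Since $\beta'$ is slope sensitive and has a single negative contact order $-r$, now primitive on $\Sigma(X_r)$, Theorem \ref{neglogorb}, in the form valid for the generalized root stacks of Remark \ref{idloget} (that is, Theorem $5.1$ of \cite{logroot2}), identifies the integral above with the large-$r$ orbifold invariant $\langle[1]_{p_1},[1]_{p_2},[pt]_{-r}\rangle_{\textbf{A}}$. By the stabilization statement of Corollary $18$ of \cite{RQC_no_log}, together with the fact that the large root stacks of $X_r$ form a cofinal family among those of $X$, this orbifold invariant is precisely $N_{p_1,p_2,r}^{orb\textbf{A}}$, which gives the asserted equality. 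The point requiring attention is that under the comparison isomorphism the insertion $ev_{x_{out}}^*(pt)$ on the logarithmic side corresponds to $[pt]_{-r}$ on the orbifold side with the normalization factor $r^{m_{i,-}}$ of Display \ref{orbinv}; this is exactly the reconciliation of the point class on the stacky stratum $\mathscr{D}_r$ with the point class on $D_r$ carried out in the proof of Corollary \ref{pointcon}, so it follows from the same bookkeeping.

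For the vanishing implication, assume $\psi_{x_{out}}^a\cap[\mathscr{M}(X,\pmb\tau,z)]^{vir}=0$ for every decorated realizable type $\pmb\tau$ marked by $\beta$. Applying the second part of Lemma \ref{rpointconst} with $\alpha=\psi_{x_{out}}^a$ yields the vanishing of the \emph{class} $[\mathscr{M}(X_r,\beta')]^{vir}\cap\big(ev_{x_{out}}^*(pt)\cup\psi_{x_{out}}^a\big)$ in the Chow group of $\mathscr{M}(X_r,\beta')$. Capping this zero class with the pullbacks of $\prod_i\psi_{x_i}^{a_i}ev_{x_i}^*(\alpha_i)$ and taking degrees shows that the logarithmic descendant integral $\int_{[\mathscr{M}(X_r,\beta')]^{vir}}\psi_{x_{out}}^a\,ev_{x_{out}}^*(pt)\prod_i\psi_{x_i}^{a_i}ev_{x_i}^*(\alpha_i)$ vanishes for all $a_i\ge 0$ and all $\alpha_i\in H^*(D_{p_i})$. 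By the same instance of the log/orbifold correspondence used for the first part, each such integral equals $\langle\prod_i\psi_{x_i}^{a_i}[\alpha_i],\psi_{x_{out}}^a[pt]\rangle_{\textbf{A}}$, which therefore vanishes, as required.

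The main obstacle I anticipate is not the overall structure of the argument but the compatibility of normalizations: one must verify that the comparison of Theorem \ref{neglogorb}, stated in the excerpt for a log smooth scheme, genuinely applies to the log DM stack $\mathscr{M}(X_r,\beta')$ constructed in Section $5$ — this rests on Proposition \ref{repfun} and Remark \ref{idloget} — and that under this comparison the coarse point insertion $ev_{x_{out}}^*(pt)$, the factors of $k$ appearing in Corollaries \ref{birinvr} and \ref{pointcon}, and the orbifold normalization $r^{m_{i,-}}$ of Display \ref{orbinv} cancel exactly, so that the two structure constants agree on the nose rather than up to a universal constant.
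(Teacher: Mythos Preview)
Your proposal is correct and follows essentially the same route as the paper: transport to $X_r$, apply the log/orbifold comparison of \cite{logroot2} there, use cofinality of root stacks divisible by $k$ to identify the result with the large-$r$ orbifold invariant of $(X,D)$, and track the factor of $k$ coming from the point insertion on $\mathscr{D}_r$ versus $D_r$. The paper organizes the equality of structure constants slightly differently, deriving the key identity $\int_{[\mathscr{M}(X_r,\beta')]^{vir}}\cdots = k^{-m_{1,-}}\langle\cdots\rangle_{\textbf{A}}$ first and then invoking Corollary \ref{pointcon} to supply the compensating factor of $k$, whereas you go through Proposition \ref{altstconst} and Lemma \ref{rpointconst} directly; these are the same ingredients in a different order, and your anticipated obstacle regarding normalizations is exactly what the paper's bookkeeping addresses.
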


\begin{proof}
Let $D = D_1 +\cdots + D_p$ be a decomposition of $D$ into irreducible components with $D_1 = D_r$. Note by the smoothness assumption that $D_i$ is a Cartier divisor for all $i$. For $\textbf{b} = (b_1,\ldots,b_p)$, let $X_{\textbf{b}}$ denote the root stack of $X$ given by taking the $b_i^{th}$ root stack of the divisor $D_i$ for all $i$. Note that by the notation introduced in display \ref{orbinv}, for $b_i \gg 0$, we have:

\begin{equation}
\langle\prod_i\psi_{x_i}^{a_i}[\alpha_i],\psi_{x_{out}}^a[pt]\rangle_{\textbf{A}}  = b_1^{m_{i,-}}\prod_{i> 1}b_i^{m_{i,-}}\int_{[\mathscr{M}^{orb}(X_{\textbf{b}},\beta)]^{vir}} \psi_{x_{out}}^a ev_{x_{out}}^*([pt])\prod_{i=1}^l \psi_{x_i}^{a_i}ev_{x_i}^*([\alpha_i]).
\end{equation}
Alternatively, since multiples of $k$ are cofinal in the poset of the natural numbers with the divisibility relation, it suffices to consider the inverse system over root stacks of the pair $(X,D)$ with a rooting parameter for $D_1$ divisible by $k$. Hence, letting $X_r$ be the log \'etale modification of $X$ in which we take $k^{th}$ root of the the Cartier divisor $D_1$, we then have for $b_i\gg 0$:

\begin{equation}\label{altorb}
\langle\prod_i\psi_{x_i}^{a_i}[\alpha_i],\psi_{x_{out}}^a[pt]\rangle_{\textbf{A}} = (kb_1)^{m_{1,-}}\prod_{i>1}b_i^{m_{i,-}}\int_{[\mathscr{M}^{orb}((X_r)_{\textbf{b}},\beta)]^{vir}} \psi_{x_{out}}^a ev_{x_{out}}^*([pt])\prod_{i=1}^l \psi_{x_i}^{a_i}\alpha_i.
\end{equation}

On the other hand, using \cite{logroot2} Theorem $B$ for the first equality, which holds after replacing $\mathscr{M}(X,\beta)$ with $\mathscr{M}(X_r,\beta)$, and Equation \ref{altorb} for the second equality, we have:
\[
\begin{split}\label{lrorb}
\int_{[\mathscr{M}(X_r,\beta)]^{vir}} \psi_{x_{out}}^a ev_{x_{out}}^*([pt])\prod_{i=1}^l \psi_{x_i}^{a_i}\alpha_i &= b_1^{m_{1,-}}\prod_{i>1}b_i^{m_{i,-}}\int_{\small{[\mathscr{M}^{orb}((X_r)_\textbf{b},\beta)]^{vir}}} \psi_{x_{out}}^a ev_{x_{out}}^*([pt])\prod_{i=1}^p \psi_{x_i}^{a_i}\alpha_i\\
&= \frac{1}{k^{m_{1,-}}}\langle\prod_{i>1}\psi_{x_i}^{a_i}[\alpha_i],\psi_{x_{out}}^a[pt]\rangle_{\textbf{A}} .
\end{split}
\]
By the second part of Lemma \ref{rpointconst}, the vanishing hypothesis gives the left hand side of the above expression is zero for all $\alpha_i$, hence the righthand side vanishes, as required for the first claim of the corollary. The claimed equality of structure constants follows by multiplying both sides of Equation \ref{lrorb} by $k$, and combining the resulting equality with the equality deduced in Corollary \ref{pointcon}.

%For the case in which $\beta$ specifies a tropical type giving a structure constant, by using Equation \ref{lrorb} to rewrite Equation \ref{altorb} in terms of the log invariant on the left hand side of Equation \ref{lrorb}, we deduce the desired equality of structure constants in the corollary by comparing with the formula deduced in Lemma \ref{pointcon}.

%In the above, $ev: \mathscr{M}^{orb}(X,\beta) \rightarrow \mathcal{I}(X)$ is the evaluation map to the stratum $X_s$, identified with the coarse moduli space of a component of the inertia stack of $X_r$. Since both of the insertions appearing in the Gromov-Witten invariants above pulled back from $X_s$, and $X$ is assumed to be slope sensitive to the tropical type $\beta$, we deduce the lemma by an application of the main result of \cite{logroot}. 

\end{proof}

Using the corollary above, Theorem \ref{mthm1} follows in a straightforward manner from the birational invariance of the invariants $N_{p,q,r}^{\textbf{B}}$ proven in \cite{pbirinv} Corollary $1.6$:

\begin{proof}[Proof of Theorem \ref{mthm1}]
Letting $p,q,r \in B(\ZZ)$ and $\textbf{B} \in NE(X)$, after packing this discrete data into the type $\beta$, consider the moduli space $\mathscr{M}(X,\beta)$ of punctured log maps. Pick a slope sensitive modification $\pi: \tilde{X} \rightarrow X$ with respect to $\beta$, i.e. a log \'etale modification such that any lift of the discrete data $\beta$ to $\tilde{X}$ is slope sensitive. After potentially further blowups, we may assume  $r$ is contained in a $1$-dimensional cone of $\Sigma(\tilde{X})$. Thus, the requirements of Corollary \ref{logorb} are satisfied, and for any $\textbf{A} \in NE(\tilde{X})$, and we have the equality:

\begin{equation}\label{eq1}
N_{p_1,p_2,r}^\textbf{A} = N_{p_1,p_2,r}^{orb\textbf{A}}
\end{equation}
Moreover, by Corollary $1.6$ of \cite{pbirinv}, we have:
\begin{equation}\label{eq2}
N_{p_1,p_2,r}^{\textbf{B}} = \sum_{\pi_*(\textbf{A}) = \textbf{B}} N_{p_1,p_2,r}^{\textbf{A}}
\end{equation}
Substituting the right hand side of Equation \ref{eq1} for terms on the right hand side of \ref{eq2} gives the desired result. 
\end{proof}

We now assume the pair $(X,D)$ satisfies condition $1$ of Assumption \ref{lcy}. Under this assumption, Tseng and You proved that the invariants above give the structure constants of an algebra over $S_I$. Using Theorem \ref{mthm1}, together with birational invariance of the structure constants used in the definition of the logarithmic mirror algebra, we give a proof of associativity of the log mirror algebra with the same assumptions.

%\begin{theorem}
%AssumingThe product of $\vartheta$ functions defines an associative $S_I$ algebra structure on $R_{(X,D)}$.
%\end{theorem}

\begin{proof}[Proof of Corollary \ref{mcr1} with Assumtion \ref{lcy}(1)]

Let $p_1,p_2,p_3 \in B(\mathbb{Z})$, and consider the $z^{\textbf{A}}\vartheta_r$ term of the two following products of theta functions:

\begin{align}
\vartheta_{p_1}(\vartheta_{p_2}\vartheta_{p_3})[z^{\textbf{A}}\vartheta_r] &= \sum_{\textbf{A} = \textbf{A}_1 + \textbf{A}_2\text{, }s \in B(\mathbb{Z})} N_{p_1,s,r}^{\textbf{A}_1}N_{p_2,p_3,s}^{\textbf{A}_2}\\
(\vartheta_{p_1}\vartheta_{p_2})\vartheta_{p_3}[z^{\textbf{A}}\vartheta_r] &= \sum_{\textbf{A} = \textbf{A}_1 + \textbf{A}_2\text{, }s \in B(\mathbb{Z})} N_{p_1,p_2,s}^{\textbf{A}_1}N_{p_3,s,r}^{\textbf{A}_2}.
\end{align}

By Proposition \ref{balancing}, only finitely many choices of $s \in B(\ZZ)$ for a given choice of splitting $\textbf{A}_1+\textbf{A}_2 = \textbf{A}$ could possibly yield a non-zero contribution to right hand side of the equalities above. Hence, only finitely many moduli space of punctured log maps appear in defining this collection of structure constants. We may therefore pass to a sufficiently fine subdivision of $(X,D)$, $(\tilde{X},\tilde{D})$, such that each of the resulting types $\beta$ are slope sensitive. By birational invariance of the structure constants, we find the two products of $\vartheta$ functions above are respectively equal to:

\begin{equation}\label{liftprod}
\vartheta_{p_1}(\vartheta_{p_2}\vartheta_{p_3})[z^{\textbf{A}}\vartheta_r] = \sum_{\textbf{A} = \textbf{A}_1 + \textbf{A}_2\text{, }s \in B(\mathbb{Z})}\sum_{\pi_*(\textbf{B}_i) = \textbf{A}_i} N_{p_1,s,r}^{\textbf{B}_1}N_{p_2,p_3,s}^{\textbf{B}_2}
\end{equation}
\begin{equation}\label{liftprod2}
(\vartheta_{p_1}\vartheta_{p_2})\vartheta_{p_3}[z^{\textbf{A}}\vartheta_r] = \sum_{\textbf{A} = \textbf{A}_1 + \textbf{A}_2\text{, }s \in B(\mathbb{Z})}\sum_{\pi_*(\textbf{B}_i) = \textbf{A}_i} N_{p_1,p_2,s}^{\textbf{B}_1}N_{p_3,s,r}^{\textbf{B}_2}.
\end{equation}

% By Corollary \ref{logorb}, we have $N_{p,q,r}^\textbf{B} = N_{p,q,r}^{orb,\textbf{B}}$ for all curve classes $\textbf{B} \in NE(\tilde{X})$ such that $\pi_*(\textbf{B}) = \textbf{A}$.
 
By associativity of the orbifold mirror algebra i.e. Theorem $37$ of \cite{RQC_no_log}, after fixing $\textbf{B} \in NE(\tilde{X})$ such that $\pi_*(\textbf{B}) = \textbf{A}$, we have:
 
 \begin{equation}\label{assonbl}
 \sum_{\textbf{B} = \textbf{B}_1 + \textbf{B}_2\text{, }s \in B(\mathbb{Z})} N_{p_1,s,r}^{\textbf{B}_1,orb}N_{p_2,p_3,s}^{\textbf{B}_2,orb} = \sum_{\textbf{B} = \textbf{B}_1 + \textbf{B}_2\text{, }s \in B(\mathbb{Z})} N_{p_1,p_2,s}^{\textbf{B}_1,orb}N_{p_3,s,r}^{\textbf{B}_2,orb}.
 \end{equation}
 
As before, for a given splitting $\textbf{B} = \textbf{B}_1 + \textbf{B}_2$, there are finitely many choices of $s \in B(\ZZ)$ which could possibly yield a non-zero contribution to either side of the equality above. In fact, the moduli stack used to define the invariant $N_{p_1,p_2,s}^{\textbf{B}_1}$ satisfies the condition of Proposition \ref{balancing} required for non-emptiness only if the moduli stack used to define $N_{p_1,p_2,s}^{\pi_*(\textbf{B})}$ satisfies the condition of Proposition \ref{balancing}. Hence, both side of Equation \ref{assonbl} only include invariants associated with discrete data potentially contributing to either equation \ref{liftprod} or \ref{liftprod2}. In particular, by our choice of blowup and Corollary \ref{logorb}, we have $N_{p,q,r}^\textbf{B} = N_{p,q,r}^{\textbf{B},orb}$ for all invariants appearing in the relation of orbifold invariants above.

 By summing over all choices of curve classes $\textbf{B}$ with $\pi_*(\textbf{B}) = \textbf{A}$ and appropriately rearranging terms using the birational invariance of the structure constants, we derive the desired equality. More explicitly, by Corollary $1.6$ of \cite{pbirinv}, we have
\[N_{p_1p_2r}^\textbf{A} = \sum_{\pi_*(\textbf{B}) = \textbf{A}} N_{p_1p_2r}^{\textbf{B}}\]
Using this equation, we have the following equality:

\[
\begin{split}
\sum_{\pi_*(\textbf{B}) = \textbf{A}} \sum_{\textbf{B} = \textbf{B}_1 + \textbf{B}_2\text{, }s \in B(\mathbb{Z})} N_{p_1,s,r}^{\textbf{B}_1}N_{p_2,p_3,s}^{\textbf{B}_2}&= \sum_{\textbf{A} = \textbf{A}_1 + \textbf{A}_2}(\sum_{\pi_*(\textbf{B}_1) = \textbf{A}_1} N_{p_1,s,r}^{\textbf{B}_1})(\sum_{\pi_*(\textbf{B}_2)=\textbf{A}_2} N_{p_2,p_3,s}^{\textbf{B}_2}) \\
&= \sum_{\textbf{A} = \textbf{A}_1 + \textbf{A}_2} N_{p_1,s,r}^{\textbf{A}_1}N_{p_2,p_3,s}^{\textbf{A}_2} \\
&= \vartheta_{p_1}(\vartheta_{p_2}\vartheta_{p_3})[z^\textbf{A}\vartheta_r].
\end{split}
\]
Similarly, we have:

\[\sum_{\pi_*(\textbf{B}) = \textbf{A}} \sum_{\textbf{B} = \textbf{B}_1 + \textbf{B}_2\text{, }s \in B(\mathbb{Z})} N_{p_1,p_2,s}^{\textbf{B}_1}N_{p_3,s,r}^{\textbf{B}_2} = \sum_{\textbf{A} = \textbf{A}_1 + \textbf{A}_2} N_{p_1,p_2,s}^{\textbf{A}_1}N_{p_3,s,r}^{\textbf{A}_2} = (\vartheta_{p_1}\vartheta_{p_2})\vartheta_{p_3}[z^\textbf{A}\vartheta_r] .\]
Since the left hand sides of the two above equations are equal by Equation \ref{assonbl}, the right hand sides are equal, as required. 

\end{proof}

\section{WDVV, TRR, and intrinsic mirror symmetry for log Calabi-Yau varieties}

%Before continuing on, we fix notation which will be useful in this section:
%
%Letting $H^*(-)$ be the singular cohomology functor, we pick a basis $\{T_{i,s}\} \in H^*(D_s)$ for the singular cohomology of the stratum associated with $s \in \ZZ^k$ a contact order with $D = \sum_{i=1}^k D_k$. Since we are assuming $X$ is simple normal crossings, there is a Poincar\'e dual basis $T_s^i$. Moreover, given a collection of contact orders $p_i \in \ZZ^k$, $\alpha_{i} \in H^*(D_{p_i})$, and $\textbf{A} \in NE(X)$, we define:
%
%\[\langle \prod_i \psi^{a_i}[\alpha_i]_{p_i}\rangle_{\textbf{A}} := \int_{[\mathscr{M}^{orb}(X_r,\textbf{A},\textbf{p})]^{vir}} \prod_i\psi_{x_i}^{a_i}\cup ev_{x_i}^*([\alpha_i]).\]
%
To prove our main theorems for all log Calabi-Yau pairs considered in \cite{int_mirror}, we recall the following two relations which hold in the large $r$ genus $0$ orbifold Gromov-Witten theory for any simple normal crossings pair $(X,D)$, see \cite{RQC_no_log} Propositions $27$ and $26$ respectively:

\begin{proposition}[WDVV relation]\label{WDVV}

Let $\textbf{A} \in NE(X)$, and a collection of $m$ pairs $(s_i,[\gamma_i]_{s_i})$ with $s_i \in \Sigma(X)(\ZZ)$ and $[\gamma_i]_{s_i} \in H^*(D_s)$. After picking a graded basis $\{T_{s,k}\}\subset H^*(D_s)$, with a dual basis $\{T_s^k\}\subset H^*(X_s)$ under the intersection pairing on $H^*(X_s)$, then by summing over all tuples $(\textbf{A}_1,\textbf{A}_2,s,S_1,S_2,T_{s,k})$ with $\textbf{A}_1+\textbf{A}_2 = \textbf{A}$, $S_1\sqcup S_2 = [m]\setminus[4]$, $s \in \ZZ^m$ and $T_{s,k}$ an element of the chosen basis $H^*(X_s)$.
\begin{equation}\label{WDVV1}
\begin{split}
\sum_{(\textbf{A}_1,\textbf{A}_2,S_1,S_2,s,T_{s,k})} \langle[\gamma_1]_{s_1},[\gamma_2]_{s_2},\prod_{j \in S_1}[\gamma_j]_{s_j},T_{-s,k}\rangle_{\textbf{A}_1}\langle T^k_{s},[\gamma_3]_{s_3},[\gamma_4]_{s_4},\prod_{j\in S_2} [\gamma_j]_{s_j}\rangle_{\textbf{A}_2} \\
= \sum_{(\textbf{A}_1,\textbf{A}_2,s,S_1,S_2,T_{s,k})} \langle[\gamma_2]_{p_2},[\gamma_3]_{p_3},\prod_{j \in S_1} [\gamma_j]_{s_j},T_{-s,k}\rangle_{\textbf{A}_1}\langle T^k_{s},[\gamma_1]_{p_1},[\gamma_4]_{s_4},\prod_{j \in S_2} [\gamma_j]_{s_j}\rangle_{\textbf{A}_2}.
\end{split}
\end{equation}
\end{proposition}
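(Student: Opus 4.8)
The plan is to deduce Proposition \ref{WDVV} from the ordinary genus-$0$ WDVV relation for orbifold Gromov-Witten invariants of the Deligne-Mumford stack $X_r$ (Abramovich-Graber-Vistoli), combined with the stabilization result \cite{RQC_no_log} Corollary $18$ and a careful bookkeeping of the normalizing factors $r^{m_{i,-}}$ in Equation \ref{orbinv}.

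First I would note that each side of Equation \ref{WDVV1} is a finite sum: for $\textbf{A} \in NE(X)$ there are finitely many effective splittings $\textbf{A} = \textbf{A}_1 + \textbf{A}_2$ and finitely many partitions $S_1 \sqcup S_2 = [m]\setminus[4]$; for each such choice the gluing contact order $s$ is constrained to finitely many values by the balancing relation of Proposition \ref{balancing} applied to either factor; and each basis $\{T_{s,k}\} \subset H^*(D_s)$ is finite. Only finitely many orbifold invariants therefore occur, and by \cite{RQC_no_log} Corollary $18$ one may fix a single vector $r = (r_i)$ of pairwise coprime positive integers large enough that every one of them has stabilized. It then suffices to prove the identity for the honest orbifold invariants of this fixed root stack $X_r$.

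Next I would run the usual WDVV argument on $X_r$. The moduli stack of $m$-pointed genus-$0$ orbifold stable maps to $X_r$ of class $\textbf{A}$ with the prescribed gerby contact orders carries a forgetful morphism to $\overline{\mathscr{M}}_{0,4} \cong \mathbb{P}^1$ remembering the (coarsened and stabilized) first four marked points. Pulling back the rational equivalence of its two boundary points and applying the orbifold splitting axiom expresses each pulled-back boundary divisor as a sum, over $\textbf{A}_1 + \textbf{A}_2 = \textbf{A}$, over $S_1 \sqcup S_2$, and over the components of the rigidified cyclotomic inertia stack $\overline{I}(X_r)$, of fibre products of two such moduli stacks glued along $\overline{I}(X_r)$. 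For $r$ large the components of $\overline{I}(X_r)$ are indexed by the sectors $s$ and are identified with the strata $D_s$, the involution of the inertia stack identifies the sector $s$ with the sector $-s$ (and $D_{-s} = D_s$), and the orbifold Poincar\'e pairing is exactly the pairing used to form the dual basis $\{T_s^k\}$; summing over $\{T_{s,k}\}$ and its dual thus reproduces the inner sum over $(s, T_{s,k})$ in Equation \ref{WDVV1}, with the age shifts in Chen-Ruan cohomology already built into the negative-contact-order conventions. The $\mu_r$-gerbe structure of $\overline{I}(X_r)$ over the ordinary inertia stack is what produces the powers of $r$ that have been absorbed into Equation \ref{orbinv}.

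The one step I expect to require genuine care---the main obstacle---is matching the normalizations $r^{m_{i,-}}$ on the two sides of Equation \ref{WDVV1}. On either side the node carries the contact order $s$ on one factor and $-s$ on the other, and for each boundary component the number of negative entries of $s$ plus that of $-s$ equals the number of nonzero entries of $s$; hence the two node contributions multiply to the same power $r^{\#\{\text{nonzero entries of }s\}}$ on both sides, while the factors attached to the external contact orders $s_1,\dots,s_m$ coincide on both sides since both partitions exhaust $\{1,\dots,m\}$. Multiplying the un-normalized orbifold WDVV identity on $X_r$ through by this common power of $r$ therefore yields Equation \ref{WDVV1} for the normalized brackets, and as $r$ was chosen in the stable range this completes the proof.
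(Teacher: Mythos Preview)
The paper does not supply its own proof of this proposition: it is quoted as a known input and attributed to \cite{RQC_no_log} Proposition~27. Your sketch is essentially the argument one finds in that reference---fix a single sufficiently large rooting vector using the stabilization result, apply the Abramovich--Graber--Vistoli splitting axiom to the forgetful map $\mathscr{M}^{orb}(X_r,\beta)\to\overline{\mathscr{M}}_{0,4}$, identify the twisted sectors of $\overline{I}(X_r)$ with the strata $D_s$, and check that the gerbe factors at the node match the normalizations $r^{m_{i,-}}$ built into Equation~\ref{orbinv}. Your bookkeeping of the node normalization (that the negative entries of $s$ and of $-s$ together account for the nonzero entries of $s$, independently of which side of the WDVV relation one is on) is the correct reason the normalized brackets inherit the relation, and is the only point that genuinely needs checking beyond the standard orbifold WDVV machinery.
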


\begin{proposition}[Topological recursion relation]\label{TRR}
With notation as above but with $S_1\sqcup S_2 = [m]\setminus [3]$, and $a_1,\ldots,a_l \in \NN$, we have the following equality of large $r$ orbifold Gromov-Witten invariants:
\begin{equation}\label{TRR1}
\begin{split}
\langle\prod_i [\gamma_i]_{s_i}\psi^{a_i}\rangle= \sum_{(\textbf{A}_i,s,S_i,T_{s,k})} \langle \psi^{a_1-1}[\gamma_1]_{s_1},\prod_{j \in S_1} \psi^{a_j}[\gamma_j]_{s_j},T_{s,k}\rangle_{\textbf{A}_1}\langle T_{-s}^k, \psi^{a_2}[\gamma_2]_{s_2},\psi^{a_3}[\gamma_3]_{s_3},\prod_{j \in S_2} \psi^{a_j}[\gamma_j]_{s_j}\rangle_{\textbf{A}_2}.
\end{split}
\end{equation}

\end{proposition}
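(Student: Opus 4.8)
The plan is to deduce the topological recursion relation from the classical expression of the first cotangent line class on $\overline{\mathscr{M}}_{0,m}$ as a sum of boundary divisors, pulled back to the moduli of twisted stable maps, together with the splitting axiom for the orbifold Gromov-Witten theory of the root stacks $X_r$; since the identity is already recorded as Proposition $26$ of \cite{RQC_no_log}, one may equally well quote it directly, and what follows is only the shape of the argument.

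First I would recall that, after fixing three distinguished marked points labelled $1$, $2$, $3$, one has $\psi_1 = \sum_{S}\delta_S$ on $\overline{\mathscr{M}}_{0,m}$, the sum running over subsets $S$ of the marked points with $1 \in S$ and $2, 3 \notin S$, where $\delta_S$ denotes the boundary divisor whose generic point parametrises a two-component nodal curve carrying the points of $S$ on one component and the remaining points on the other. Pulling this relation back along the morphism $\mathscr{M}^{orb}(X_r,\textbf{A},\textbf{p}) \rightarrow \overline{\mathscr{M}}_{0,m}$ which stabilises the domain, one compares the pullback of $\psi_1$ with the intrinsic cotangent line $\psi_{x_1}$ on the space of maps; the two differ only along the locus where a rational component carrying $x_1$ is contracted, and, exactly as in the ordinary theory, this discrepancy dies against the remaining insertions, so the relation holds at the level of the classes actually integrated.

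Second I would cap the pulled-back boundary expression with $[\mathscr{M}^{orb}(X_r,\textbf{A},\textbf{p})]^{vir}$ and apply the gluing axiom. For each $\delta_S$ the restriction of the virtual class is a sum over curve-class splittings $\textbf{A} = \textbf{A}_1 + \textbf{A}_2$ and over the components of the rigidified cyclotomic inertia stack of $X_r$ indexed by a contact order $s$, with the two branches of the node carrying contact orders $s$ and $-s$ and node insertions given by a Poincar\'e dual pair $T_{s,k}$, $T^k_{-s}$. Re-indexing the subsets $S$ by a partition $S_1 \sqcup S_2 = [m]\setminus[3]$, distributing the descendant insertions $\psi^{a_j}[\gamma_j]_{s_j}$ to the appropriate factor, and recording that $\psi_{x_1}$ drops the exponent $a_1$ by one, which produces the insertion $\psi^{a_1 - 1}[\gamma_1]_{s_1}$, reproduces the right-hand side of \eqref{TRR1}.

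The step I expect to require the most care is matching the normalisation factors $\prod_i r^{m_{i,-}}$ from \eqref{orbinv} across the node: degenerating the curve creates a new pair of marked points of contact orders $s$ and $-s$ whose negative entries together exhaust all the nonzero entries of $s$, so a priori the two sides of the relation carry different powers of $r$, and this surplus must be reconciled against the isotropy of the twisted curve at the balanced node and the factor it contributes to the orbifold gluing formula. Verifying that these contributions cancel precisely, so that \eqref{TRR1} holds verbatim with the normalisations of \eqref{orbinv}, is the delicate point — and is exactly what is carried out in \cite{RQC_no_log}.
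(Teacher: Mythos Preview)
The paper does not prove this proposition at all: it is simply quoted from \cite{RQC_no_log} as Proposition~$26$, so there is no argument in the paper to compare against. You correctly identify this and even say one may quote it directly; the sketch you give (expressing $\psi_1$ on $\overline{\mathscr{M}}_{0,m}$ as a sum of boundary divisors, pulling back, and applying the orbifold splitting axiom, with the normalisation bookkeeping deferred to \cite{RQC_no_log}) is the standard derivation and is consistent with what that reference does.
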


As before, in the logarithmic setting, we will have access to analogous relations only if we assume all the moduli spaces determined by the discrete data appearing in equations above are slope sensitive. More explicitly, after fixing the initial data of the above two propositions, let us say a decorated tropical type $\beta = (\textbf{A}_1,((s_i),s))$ is a \emph{potentially contributing type} if the type is balanced in the sense of Proposition \ref{balancing} and there exists a curve class $\textbf{A}_2$ such that $\textbf{A}_1 + \textbf{A}_2 = \textbf{A}$. By Proposition \ref{balancing}, the space of orbifold maps with discrete data $(\textbf{A}_1,((s_i),s))$ is non-empty only if the discrete data is potentially contributing. Moreover, there are only finitely many potentially contributing types. Indeed, by fixing an ample line bundle $L$ on $X$ and defining $deg(\textbf{B}) = \textbf{B}\cdot c_1(L)$, there are only finitely many curves $\textbf{A}'$ such that $deg(\textbf{A}') \le deg(\textbf{A})$. Moreover, by Proposition \ref{balancing}, there are only finitely many choices of contact order $s$ after having fixed $\textbf{A}_1$ and $s_i$. 

As a result, there are only finitely many non-zero possible terms in either of the above relations. Therefore, by Section $3.5$ of \cite{logroot}, there exists a smooth projective resolution $\tilde{X} \rightarrow X$ such that any lift of a potentially contributing tropical type to $\Sigma(\tilde{X})$ is slope-sensitive. In addition, it is straightforward to check that any potentially contributing type for $\tilde{X}$ pushes forward to a potentially contributing type for $X$. Hence, all potentially contributing types to the WDVV and TRR equations for $\tilde{X}$ are slope sensitive. In particular, we produce relations in the log Gromov-Witten theory for $\tilde{X}$ by the main result of \cite{logroot2}. We record this fact in the following theorem:

\begin{theorem}\label{logrelations}
Given a curve class $\textbf{A} \in NE(X$), and integral points $s_i \in \Sigma(X)(\ZZ)$ for $1\le i \le m$, there exists a smooth projective log \'etale modification $\pi: \tilde{X} \rightarrow X$ such that after a choice of lift $s_i' \in \Sigma(\tilde{X})(\ZZ)$, for any class $\textbf{B}$ with $\pi_*(\textbf{B}) = \textbf{A}$, equations \ref{WDVV1} and \ref{TRR1} hold after replacing all orbifold Gromov-Witten invariants with corresponding refined punctured log Gromov-Witten invariants defined in \cite{logroot2}. 
\end{theorem}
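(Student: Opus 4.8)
The plan is to transport the orbifold identities \ref{WDVV1} and \ref{TRR1} of Propositions \ref{WDVV} and \ref{TRR} into the logarithmic theory \emph{term by term}, applying the log/orbifold comparison of Theorem \ref{neglogorb} (the general-contact-order case, i.e.\ the main result of \cite{logroot2}) to each orbifold invariant appearing on either side of these relations. That comparison requires the moduli space attached to the decorated tropical type of each such invariant to be slope sensitive, both for Theorem \ref{neglogorb} to apply and for the refined virtual classes of \cite{logroot2} to be defined. So the entire content of the argument is to produce a \emph{single} smooth projective log \'etale modification $\pi\colon \tilde X \to X$ after which every decorated type occurring in \ref{WDVV1} and \ref{TRR1}, for every admissible splitting of every curve class $\textbf{B}$ with $\pi_*\textbf{B}=\textbf{A}$, is slope sensitive; the identities then follow by substitution.

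First I would establish that only finitely many decorated types can contribute. Fixing an ample line bundle $L$ on $X$ and setting $\deg(\textbf{B}')=\textbf{B}'\cdot c_1(L)$, only finitely many effective classes satisfy $\deg(\textbf{B}')\le\deg(\textbf{A})$, so only finitely many $\textbf{A}_1$ admit an effective complement $\textbf{A}_2$ with $\textbf{A}_1+\textbf{A}_2=\textbf{A}$; and once $\textbf{A}_1$ and a sub-collection of the fixed data $s_1,\dots,s_m$ are chosen, Proposition \ref{balancing} confines the remaining gluing contact order to a finite set. Hence the set of \emph{potentially contributing types}, the balanced decorated types $\beta=(\textbf{A}_1,((s_i),s))$ admitting an effective $\textbf{A}_2$ with $\textbf{A}_1+\textbf{A}_2=\textbf{A}$, is finite, and by Proposition \ref{balancing} it contains the type of every orbifold invariant that can occur with nonzero coefficient on either side of \ref{WDVV1} or \ref{TRR1}. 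Applying the construction of Section $3.5$ of \cite{logroot} to this finite set then yields a smooth projective log \'etale modification $\pi\colon\tilde X\to X$ such that every lift to $\Sigma(\tilde X)$ of a potentially contributing type is slope sensitive.

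Next I would check that this one modification works uniformly. Given $\textbf{B}$ with $\pi_*\textbf{B}=\textbf{A}$ and any lifts $s_i'\in\Sigma(\tilde X)(\ZZ)$ of the $s_i$, a decorated type occurring in \ref{WDVV1} or \ref{TRR1} over $\tilde X$ has curve class a summand $\textbf{B}_1$ of $\textbf{B}$ and contact orders drawn from the $s_i'$ together with one gluing leg; its push-forward along $\pi$ has curve class $\pi_*\textbf{B}_1$ with complement $\pi_*\textbf{B}_2$ (summing to $\pi_*\textbf{B}=\textbf{A}$), contact orders the $s_i$ and the push-forward of the gluing leg, and, as balancing is preserved under push-forward, remains balanced in the sense of Proposition \ref{balancing}; so it is potentially contributing and the given type over $\tilde X$ is one of its lifts, hence slope sensitive by the previous step. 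Then Theorem \ref{neglogorb} identifies each orbifold invariant in \ref{WDVV1} and \ref{TRR1} with its refined punctured log counterpart on $\tilde X$; the gluing data, namely a graded basis $\{T_{s,k}\}\subset H^*(D_s)$, its Poincar\'e dual $\{T_s^k\}$, and the pairing of the insertions at the $s$ and $-s$ legs, is intrinsic to the smooth strata $D_s\subset\tilde X$ and common to both theories, so term-by-term substitution into the identities of Propositions \ref{WDVV} and \ref{TRR} yields the asserted logarithmic relations.

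I expect the only genuine obstacle to be the double finiteness of the second step, of the curve classes and, through Proposition \ref{balancing}, of the gluing contact orders, together with the uniformity verification of the third step that one resolution suffices for every $\textbf{B}$ lying over $\textbf{A}$; once those are in place, the slope-sensitivity resolution of \cite{logroot} and the comparison theorem of \cite{logroot2} supply everything else, and no new Gromov-Witten input is needed.
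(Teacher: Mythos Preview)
Your proposal is correct and follows essentially the same approach as the paper: the paper's argument (given in the paragraph preceding the theorem statement) defines the same notion of potentially contributing type, establishes finiteness via the identical ample-line-bundle bound and Proposition \ref{balancing}, invokes Section $3.5$ of \cite{logroot} to obtain a single slope-sensitive resolution, checks that potentially contributing types for $\tilde{X}$ push forward to potentially contributing types for $X$, and then applies the main comparison of \cite{logroot2}. Your write-up is if anything slightly more explicit about the term-by-term substitution and the shared gluing data, but the strategy is the same.
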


\begin{remark}
When the target pair is toric, the genus $0$ log GW theory is entirely captured by tropical geometry. As a result, when $X$ is a surface, one should recover results of Gathman, Markwig and Rau in \cite{GaMa}, \cite{tropdecGW}, and for general toric $X$ the WDVV and TRR relations investigated in \cite{tropint}. The latter paper including a hypothesis of $\Theta$ directionality, which appears to be implied by the assumption that all moduli spaces involved are slope sensitive. The source of complexity requiring this extra condition comes from issues with splitting the diagonal of the target, analogous to the problems found in gluing formulas for log GW invariants.  
\end{remark}

%For future purposes, we observe that relations in log Gromov-Witten theory that can be derived from the WDVV relation with $3$ insertions can be used to construct an analogue of quantum cohomology in log Gromov-Witten theory. Recall from... the definition of of log cohomology of a log scheme $X$, $LogH^*(X) := colim_{\tilde{X} \rightarrow X}\text{ }H^*(\tilde{X},\mathbb{C})$. For a log smooth projective variety $(X,D)$, we consider the following vector space, which will be the state space containing the insertions:
%
%\[\mathfrak{S} := \bigoplus_{p \in \Sigma(X)(\ZZ)} 

% after tensoring with an appropriate monoid ring of curves, can be equipped with an associative algebra structure, generalizing quantum cohomology for a smooth projective variety $X$ with trivial log structure:
%
%\begin{definition}
%Let $(X,D)$ be a smooth projective variety, and consider the monoid $M_{(X,D)} := NE(X)\bigoplus_{p \in \Sigma(X)(\ZZ)} \ZZ c_p$. Let $LogH^*(X,D) \otimes \kk[M_{(X,D)}]$ be the scalar extension from $\kk$ to $\kk[M_{(X,D)}]$, and for $\gamma_1,\gamma_2,\gamma_3 \in logH^*(X,D)$, considered as classes on some log blowup $\tilde{X} \rightarrow X$, 
%
%\end{definition}

In order to use these formula, one needs to know a log \'etale modification of the target space in which the tropical types are slope sensitive, which typically requires a good understanding of the tropical types associated to strata of the relevant moduli space. Assuming this is known however, the extra cohomology classes appearing in the formula come correspond to piecewise polynomial functions on the tropicalization, see for instance \cite{logDR} Section $6.6.2$. In this setting however, the relevant classes will be fundamental classes of strata as well as their Poincar\'e dual point classes, leading to a universal formula that assumes no further geometric input aside from assumption of log Calabi-Yau.

%If all of the contact orders $s_i$ are assumed to be non-negative, then each of the orbifold invariants in the previous equations are integrals over moduli spaces of maps with at most one negative contact order. In these situations, the main result of (root= log) applies directly to show that the analogous equation holds in the log theory assuming each of the non-trivially contributing moduli spaces are slope sensitive. When the associated log moduli problem is not slope sensitive, note that only finitely non-zero terms appear on either side of the above equality, so we may pass to a log \'etale modification $\tilde{X} \rightarrow X$ in which the associated moduli spaces of maps to $\tilde{X}$ are slope sensitive. 

For our application of interest, we will be considering on the one hand $m = 4$, with $p_1,p_2,p_3,-r \in \Sigma(X)(\ZZ)$ and insertions $[1]_{p_1},[1]_{p_2},[1]_{p_3},[pt]_{-r}$, and on the other hand $m= k+1$ with $p_0 = 0$, $p_i \in \Sigma(X)(\ZZ)$ for $i\not= 0$, and insertions $[pt]_{p_0}\psi^{k-2},[1]_{p_1},\ldots,[1]_{p_k}$.  In both cases of interest, equations \ref{WDVV1} and \ref{TRR1} do not yield the desired relations, but only after showing various terms contributing to summed expressions vanish. To deduce the necessary vanishing of orbifold invariants, we will show an associated log invariant vanishes, and use the log-orbifold correspondence of \cite{logroot} to prove the necessary vanishing of orbifold invariants. 

\begin{proof}[Proof of Corollary \ref{mcr1}]

Consider the WDVV relation described following Equation \ref{WDVV1}:

\begin{equation}\label{WDVV2}
\begin{split}
\sum_{(\textbf{A}_1, \textbf{A}_2,s,k)} \langle[1]_{p_1},[1]_{p_2},T_{-s,k}\rangle_{\textbf{A}_1}\langle T^k_{s},[1]_{p_3},[pt]_{-r}\rangle_{\textbf{A}_2} \\
= \sum_{(\textbf{A}_1, \textbf{A}_2 ,s,k)} \langle[1]_{p_2},[1]_{p_3},T_{-s,k}\rangle_{\textbf{A}_1}\langle T^k_{s},[1]_{p_1},[pt]_{-r}\rangle_{\textbf{A}_2}.
\end{split}
\end{equation}

As only finitely many orbifold invariants are involved in the above relation, we pass to a smooth projective log modification $\tilde{X} \rightarrow X$ in which all tropical lifts of all tropical types corresponding to strata of the associated log moduli problems are slope sensitive, and $r \in \Sigma(X)^{[1]}$. By Theorem \ref{logrelations}, we know all contributing combinatorial types to the orbifold WDVV relation for $\tilde{X}$ are slope sensitive.

We must show that the only possible terms which give a non-zero contribution to either side of equation \ref{WDVV2} for the target $\tilde{X}$ have insertions either $[1]_s$ or $[pt]_s$, with one of $\pm s$ a positive contact order. Since each is a product of two orbifold Gromov-Witten invariants, we will show that for terms with insertion not of this form, then at least one of them is zero. We prove this statement using both the log/orbifold correspondence established in \cite{logroot2}, and a modification of the proof Lemma $7.14$ of \cite{int_mirror}, referred to as the no-tail lemma in loc. cit.

\begin{lemma}\label{asslem}
If $\langle T_{s}^k,[1]_{p},[pt]_{-r}\rangle_{\textbf{A}_2} \not= 0$, then $s \in B(\ZZ)$ and $T_{s}^k = [1]_s$.
\end{lemma}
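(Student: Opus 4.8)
\textbf{Proof plan for Lemma \ref{asslem}.}

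The plan is to argue by contradiction: suppose the invariant $\langle T_{s}^k,[1]_{p},[pt]_{-r}\rangle_{\textbf{A}_2}$ is nonzero but either $s \notin B(\ZZ)$ (i.e. $s$ lies on a cone meeting a bad divisor) or $T_s^k$ is a class of positive cohomological degree. Since we have passed to a smooth projective log \'etale modification $\tilde{X}$ on which all potentially contributing types are slope sensitive, Theorem \ref{neglogorb} and its corollaries let me rewrite this orbifold invariant as a refined punctured log Gromov-Witten invariant of $\tilde{X}$, namely $\int_{[\mathscr{M}(\tilde{X},\beta)]^{vir}} ev_{x_{out}}^*([pt]) \cup ev_{s}^*(T_s^k)$ where $\beta$ is the type with contact orders $p$, $-s$ (interpreted appropriately) and $-r$, curve class $\textbf{A}_2$. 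By Corollary \ref{vanishing}, it suffices to show that the contribution of every realizable decorated tropical type $\pmb\tau$ marked by $\beta$ vanishes, so I can work one tropical type at a time.

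First I would record the dimension constraint. The point insertion $[pt]_{-r}$ together with the virtual dimension formula \ref{vdimorb} forces the virtual dimension of $\mathscr{M}(\tilde{X},\beta)$ to be $\dim D_s$ (this is the orbifold vdim recorded after Theorem \ref{neglogorb}), and the insertion $ev_s^*(T_s^k)$ then has to be top-dimensional, i.e. $T_s^k$ must be a point class and $T_{s,k}=[1]_s$; conversely a positive-degree $T_s^k$ kills the integral for dimension reasons unless some tail absorbs the excess. So the real content is the case $s \notin B(\ZZ)$, and here I want to run the no-tail argument of \cite{int_mirror} Lemma $7.14$. The key geometric input is that the output marked point $x_{out}$ carries contact order $-r$ with $r \in B(\ZZ)$ and the point constraint $[pt]$ places it in the open stratum $X \setminus D$ (or the interior stratum $D_r^\circ$); following the tropical curve from the vertex supporting $x_{out}$, the balancing condition (Proposition \ref{balancing}) and the requirement that $c_1(T_X^{log})$ be nef (Assumption \ref{lcy}(1)) force any vertex whose image lies on a cone meeting a bad divisor to carry zero curve class and hence contract, which then contradicts stability or forces the contact order $s$ to in fact land in $B(\ZZ)$. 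I would import the \cite{int_mirror} Lemma $7.14$ argument essentially verbatim, using that on $\tilde{X}$ (smooth, snc) the tropical type $\pmb\tau$ is realizable and slope sensitive so that the moduli stack $\mathscr{M}(\tilde{X},\pmb\tau)$ is equidimensional and the relevant pushforward/virtual pullback manipulations from Lemma \ref{rpointconst} apply.

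The main obstacle I anticipate is the bookkeeping in adapting the no-tail lemma from the ordinary log setting of \cite{int_mirror} to the punctured setting with an intermediate marked point of contact order $-s$: one has to check that introducing the extra puncture and the class $ev_s^*(T_s^k)$ does not create new tropical configurations that evade the contraction argument, and that the numerical balancing at the vertex supporting the $-s$ puncture genuinely forces $s$ into the good subcomplex $B$. I would handle this by combining the balancing identity $D_j \cdot \textbf{A}_2 = \sum_i D_j^*(s_i)$ over all components $D_j$ with the log Calabi-Yau positivity $K_X + D =_{\mathbb{Q}} \sum a_i D_i$, $a_i \ge 0$: summing the balancing identities weighted by the $a_i$ shows that if $s$ had a negative pairing against a bad divisor then the total curve class $\textbf{A}_2$ would be forced to pair negatively with an effective class, which is impossible. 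This is the same mechanism as in loc. cit., so once the tropical setup is in place the argument should close without genuinely new ideas.
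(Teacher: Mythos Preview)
There is a genuine gap. Your proposal leans on two inputs that are unavailable here. First, you invoke Assumption~\ref{lcy}(1) (that $c_1(T_X^{log})$ is nef) to force vertices on bad cones to carry zero curve class; but Lemma~\ref{asslem} is stated precisely in the section whose purpose is to \emph{remove} that assumption and work under Assumption~\ref{lcy}(2) only. Second, your closing balancing argument does not close: summing $D_j\cdot\textbf{A}_2 = D_j^*(p)+D_j^*(s)-D_j^*(r)$ against the $a_j$ gives $(K_X+D)\cdot\textbf{A}_2 = \sum_{j\ \text{bad}} a_j D_j^*(s)$, but there is no reason the left side is nonnegative for an effective class $\textbf{A}_2$ (effective divisors can pair negatively with effective curves), so nothing forces $D_j^*(s)=0$ for bad $D_j$.

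The paper's argument is genuinely more delicate. After imposing the point constraint at $x_{out}$, one walks along the \emph{spine} of the tropical type $\tau$ (the chain of edges from the vertex carrying $l_{out}$ to the leg with contact $s$) and shows by induction that every edge stays in $B$. The inductive step cuts $\tau$ at the next edge $e_k$, producing a subtype $\pmb\tau_1$ containing $v_{out}$; the log Calabi-Yau condition gives $A(\pmb\tau_1)\cdot c_1(T_X^{log}) = -\sum_{i\ \text{bad}} a_i D_i^*(u_{e_k}) \le 0$, strict if $e_k$ leaves $B$, and then a careful dimension count (bounding $\dim\mathfrak{M}^{ev(x_{out},x)}(\mathcal{X},\tau_1,z)$ via the point constraint and Proposition~3.28 of \cite{punc}) shows $\mathscr{M}(X,\pmb\tau_1,z)$ has negative virtual dimension, killing the contribution via \cite{int_mirror} Theorem~A.13. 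Only \emph{after} this spine argument establishes $D_i^*(s)=0$ for bad $D_i$---hence $c_1(T_X^{log})\cdot\textbf{A}_2=0$---does the orbifold virtual dimension formula~\eqref{vdimorb} reduce to $\dim X - 1 - e_s$, at which point comparing against the point insertion forces $e_s=0$ and $T_s^k=[1]_s$ (the fundamental class, not the point class; your proposal has this reversed). So the logical order is: spine argument first, dimension count second; your proposal inverts this and the first step is missing.
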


\begin{proof}
We bundle the degree $\textbf{A}_2$ and contact orders $-r,p,s$ into the type $\beta$, and consider the associated moduli space of punctured log maps $\mathscr{M}(X,\beta)$. By assumption, we have $r \in \Sigma^{[1]}(X)$. Thus, by Corollary \ref{logorb}, to show $\langle T_{s},[1]_{p},[pt]_{-r}\rangle_{\textbf{A}_2} = 0$, it suffices to consider the point constrained moduli spaces $\mathscr{M}(X,\tau,z)$ for all realizable tropical types $\tau$ marked by $\beta$, and show that the virtual classes $[\mathscr{M}(X,\tau,z)]^{vir}$ are all zero. We will show this is the case unless we have $s \in B(\ZZ)$.

% The class $[\mathfrak{M}^{ev}(\mathcal{X},\beta,z)]^{vir}$ comes from applying Aluffi's Segre class formula for monomial ideals to the puncturing ideal on $\mathfrak{M}^{ev}(\mathcal{X},\beta,z)$, and is given by a sum over strata of $\mathfrak{M}^{ev}(\mathcal{X},\beta,z)$ capped with an obstruction class. Hence, to show that the orbifold invariant of interest is non-vanishing only if $s \in B$, it will suffice to show that the virtual pullback of all strata of $\mathfrak{M}^{ev}(\mathcal{X},\beta,z)$ is zero unless $s \in B$. 

%To that end, we let $z \in X_r^\circ$, and we consider the morphism $\mathscr{M}(X,\beta,z) \rightarrow \mathfrak{M}^{ev}(\mathcal{X},\beta,z)$ that the perfect obstruction on $\mathscr{M}(X,\beta,z)$ is defined with respect to. We consider strata $\mathfrak{M}_{\tau} \subset \mathfrak{M}^{ev}(\mathcal{X},\beta,z)$ associated with a realizable tropical type $\tau$ and the associated fiber product:
%
%\[\mathscr{M}_{\tau} = \mathscr{M}(X,\beta,z) \times_{\mathfrak{M}^{ev}(\mathcal{X},\beta,z)}\mathfrak{M}_{\tau} \]
%Note that we have a finite map $i: \mathscr{M}(X,\pmb\tau,z) \rightarrow \mathscr{M}_\tau$ with $i_*([\mathscr{M}(X,\pmb\tau,z)]^{vir}) = k_\tau \epsilon_\tau^!([\mathfrak{M}_\tau])$ for some $k_\tau \in \ZZ_{\ge 0}$. Hence, it will suffice to show $[\mathscr{M}(X,\pmb\tau,z)]^{vir} = 0$. 

To begin, note that by the point constraint, $G_\tau$ must contain a vertex $v_0$ containing the leg with contact order $-r$ mapping into the cone $\sigma_r$ with $r \in \sigma_{r,\NN}$. In particular, this vertex must map to a cone in $\mathscr{P}$. If $v_0$ were only $2$-valent, then we may cut along the unique edge containing $v_0$ to produce two tropical types which glue to give $\tau$, one of which consists of a single vertex with $2$ legs. By the vanishing tail lemma of \cite{int_mirror} i.e. Theorem $7.14$ of loc cit, we must have $[\mathscr{M}(X,\pmb\tau,z)]^{vir} = 0$. 

Thus, we suppose $v_0$ is at least $3$-valent. We define the spine of $\tau$ be the subgraph of $G_\tau$ given by the convex hull of the legs with contact orders $-r$ and $s$.  We will now show that each edge in this spine is contained in $B$:

\begin{lemma}\label{spine}
Let $\tau$ be a tropical type with legs $L,l_{out},l_2$, with $u_{l_{out}}= -r$ and $u_{l_2} = p$, and a vertex $v \in V(G_\tau)$ with $\pmb\sigma(v) \in \mathscr{P}(X)$ and contained in the legs $l_{out},l_2$. If $[\mathscr{M}(X,\pmb\tau,z)]^{vir} \not= 0$, then for all edges and legs $e \in E(G_\tau) \cup L(G_\tau)$ contained in the convex hull of $v$ and $L$, we have $\pmb\sigma(e) \subset B$. 

\end{lemma}

\begin{proof}
We prove the lemma by induction on length of the chain of edges and legs in the convex hull of $L$ and $l_{out}$. To that end, enumerate the edges in this convex hull by $l_{out} = e_0,e_1,\ldots, e_J = L$, with $e_1$ the unique edge in the convex hull which contains $v_0$ which is not equal to $L_{out}$ nor $l_2$. As a base case, note that $r \in B(\ZZ)$, and since $v_0 \in B$ by the point constraint, the leg with contact order $-r$ must be contained in $B$, i.e. $\pmb\sigma(l_{out}),\pmb\sigma(l_2) \subset B$. Now for the induction hypothesis, we suppose we have an edge $e_k$ in the spine such that $\pmb\sigma(e_{k-1}) \subset B$. In particular, $D_i^*(u_{e_{k-1}}) = 0$ for all bad divisors $D_i$. Letting $v \in V(G_\tau)$ be the shared vertex of $e_{k}$ and $e_{k-1}$, consider the two tropical types produced by cutting $\tau$ along $e_k$, and let $\tau_1$ the resulting tropical type containing $v$. By the induction hypothesis, we may assume $\pmb\sigma(e_{k-1}) \in B$, so in particular $v \in B$. Hence, $D_i^*(u_{e_k})\ge 0$ for any bad divisor $D_i$. Note now that since $c_1(T_X^{log}) = \sum_{i} -a_iD_i$, with $a_i \ge 0$ and $a_i = 0$ if and only if $\sigma_{D_i} \in B \subset \Sigma(X)$, and \cite{int_mirror} Corollary $1.14$, we have:
\[A(\pmb\tau_1) \cdot c_1(T_X^{log}) = \sum_{D_i\text{ }bad\text{ }divisor} -a_iD_i^*(u_{e_k}+p-r) = \sum_{D_i\text{ }bad\text{ }divisor} -a_iD_i^*(u_{e_k}) \le 0.\]
Hence we must have $A(\tau_1) \cdot c_1(T_X^{log}) \le 0$, with $A(\tau_1) \cdot c_1(T_X^{log}) = 0$ if and only if $D_i^*(u_{e_k})= 0$ for all bad divisors $D_i$. Since $v \in B$, this would also imply $e_{k} \in B$. Thus, suppose $A(\tau_1)\cdot c_1(T_X^{log}) < 0$. Letting $x_{out}$ and $x$ correspond to the special sections associated with the legs $l_{out}$ and $L$ respectively, note that by construction and Corollary $7.3$ of \cite{punc}, we have the following cartesian diagram, with the right vertical arrow equipped with the product of obstruction theories:
\[\begin{tikzcd}
\mathscr{M}(X,\tau,z) \arrow{r}\arrow{d}& \mathscr{M}(X,\tau_1,z)\times\mathscr{M}(X,\tau_2) \arrow{d}\arrow{r}& \mathscr{M}(X,\tau_1)\times \mathscr{M}(X,\tau_2)\arrow{d}\\
\mathfrak{M}^{ev(x_{out})}(\mathcal{X},\tau,z) \arrow{r} & \mathfrak{M}^{ev(x_{out},x)}(\mathcal{X},\tau_1,z)\times \mathfrak{M}^{ev(x)}(\mathcal{X},\tau_2) \arrow{r}& \mathfrak{M}^{ev(x_{out},x)}(\mathcal{X},\tau_1)\times \mathfrak{M}^{ev(x)}(\mathcal{X},\tau_2).
\end{tikzcd}\]
Moreover, the obstruction theories for the middle and left vertical arrow is the pullback of the obstruction theory of the right vertical arrow. By \cite{int_mirror} Theorem $A.13$, if we show $\mathscr{M}(X,\pmb\tau_1,z)$ has negative virtual dimension, we will have shown that $\pmb\tau$ cannot contribute.

% Consider then the moduli stack:
%\[\mathfrak{M}_1^{ev} := (\mathfrak{M}^{ev}(\mathcal{X},\tau_1) \times_{\mathscr{P}(X,r)}^{fs} W) \times_{\underline{X}} \underline{X}\]
%By forgetting the point constraint, $\mathfrak{M}^{ev}_1$ admits a map to $\mathfrak{M}^{ev(x_{out},x)}$.

With this end in mind, we note that the perfect obstruction theory for the morphism $\mathscr{M}(X,\pmb\tau_1,z) \rightarrow \mathfrak{M}^{ev(x_{out},x)}(\mathcal{X},\tau_1,z)$ gives $\mathscr{M}(X,\pmb\tau_1,z)$ the following relative virtual dimension over $\mathfrak{M}^{ev(x_{out},x)}(\mathcal{X},\tau_1,z)$:

\[\chi(f^*T_X^{log}(-x_{out}-x)) = A(\pmb\tau_1)\cdot c_1(T_X^{log}) - dim\text{ }X < -dim\text{ }X.\]

The equality above follows from Riemann-Roch and the inequality follows from assumption. Thus, it suffices to show that $dim\text{ }\mathfrak{M}^{ev(x_{out},x)}(\mathcal{X},\tau_1,z) \le dim\text{ }X$. Note that this dimension equals:

\[dim\text{ }\mathfrak{M}^{ev(x_{out},x)}(\mathcal{X},\tau_1,z) = dim (\mathfrak{M}^{ev(x_{out})}(\mathcal{X},\tau_1) \times_{\mathscr{P}(X,r)}^{fs} W)\times_{\underline{\mathcal{X}}} \underline{X} = dim (\mathfrak{M}^{ev}(\mathcal{X},\tau_1) \times_{\mathscr{P}(X,r)}^{fs} W) + dim\text{ }X.\]

On the other hand, since $r$ is contained in the $1$-skeleton of $B$, $dim\text{ } \pmb\sigma(v_{out}) \le 1$. Since the tropical evaluation map $ev_{v_{out}}^{gp}: \tau_1 \rightarrow \pmb\sigma(v_{out})$ must intersect the interior of $\pmb\sigma(v_{out})$, the evaluation must be surjective.  Thus, we have $dim\text{ }\tau_1 \ge codim\text{ }X_r$. By \cite{punc} Proposition $3.28$, we therefore have $dim\text{ } \mathfrak{M}^{ev}(\mathcal{X},\tau_1) \le dim\text{ }X - codim\text{ }X_r = dim\text{ }X_r$. 

%Furthermore, $\mathfrak{M}^{ev}(\mathcal{X},\tau_1) \times_{\mathscr{P}(X,r)} W \rightarrow \mathfrak{M}^{ev}(\mathcal{X},\tau_1)$ is pulled back from a regular embedding of codimension $dim\text{ }X_r$.

To bound the dimension of $\mathfrak{M}^{ev}(\mathcal{X},\tau_1,z)$, we note that after removing all stratum $\mathfrak{M}_{\tau_1'}^{ev}\subset \mathfrak{M}^{ev}(\mathcal{X},\tau_1)$ with $\mathfrak{M}^{ev}_{\tau_1'} \times_{\mathscr{P}(X,r)} W = \emptyset$, we have $\mathfrak{M}^{ev}(\mathcal{X},\tau_1)$ is isomorphic to either $\mathfrak{M}(\mathcal{X},\tau_1) \times X^\circ$ or a $\mathbb{G}_m$-torsor over the product $\mathfrak{M}(\mathcal{X},\tau_1)\times X_r^\circ$, depending whether or not $r = 0$. Indeed, the first isomorphism is by definition. For the second isomorphism, we let $\mathcal{U}$ be the universal $\mathbb{G}_m$-torsor over $B\mathbb{G}_m$, we consider the morphism $(ev_{x_{out}}\circ \pi_1)^*(\mathcal{U})\otimes (p\circ\pi_2)^*(\mathcal{U}^{\vee}): \mathfrak{M}(\mathcal{X},\tau_1) \times X_r^\circ \rightarrow B\mathbb{G}_m$ with $p: X_r^\circ \rightarrow B\mathbb{G}_m$ the morphism classifying the log structure on $X_r$. The second isomorphisms follows by observing the following is a cartesian diagram, with the bottom horizontal morphism given above:

\[\begin{tikzcd}
\mathfrak{M}^{ev}(\mathcal{X},\tau_1) \arrow{r}\arrow{d}& Spec\text{ }\kk\arrow{d}\\
\mathfrak{M}(\mathcal{X},\tau_1) \times X_r^\circ \arrow[r]&B\mathbb{G}_m
\end{tikzcd}\]

%"$(ev_{x_{out}}\circ \pi_1)^*(\mathcal{U})\otimes (p\circ\pi_2)^*(\mathcal{U})$"
Using this description, it follows that after removing the relevant strata from the source that $\mathfrak{M}^{ev}(\mathcal{X},\tau_1) \rightarrow X_r$ is flat, and hence $\mathfrak{M}^{ev}(\mathcal{X},\tau_1) \times_{X_r} z$ has dimension 
\[dim\text{ }\mathfrak{M}^{ev}(\mathcal{X},\tau_1) \times_{X_r} z = dim\text{ }\mathfrak{M}^{ev}(\mathcal{X},\tau_1) - dim\text{ }X_r.\] Additionally, it is straightforward to check that the previous fiber product of stacks is also the stack theoretic fiber product $\mathfrak{M}^{ev}(\mathcal{X},\tau_1)\times_{\underline{\mathscr{P}(X,r)}} \underline{W}$. Thus, we have the following dimension bound:
\[dim\text{ }\mathfrak{M}^{ev}(\mathcal{X},\tau_1) \times_{\underline{\mathscr{P}(X,r)}} \underline{W} \le dim\text{ }X_r  - dim\text{ }X_r=  0.\]
Since the dimension of fs fiber products is bounded above by the dimension of the fiber product of underlying stacks, the fs fiber product has dimension bounded above by $0$. From this inequality, we deduce $dim\text{ }\mathfrak{M}^{ev(x_{out},x)}(\mathcal{X},\tau_1,z) \le dim\text{ } X$, implying the desired negative virtual dimension.

\end{proof}

Thus, the spine of $\tau$ must map into $B \subset \Sigma(X)$. Note that if $[\mathscr{M}(X,\beta,z)]^{vir} \not= 0$, we must have $[\mathscr{M}(X,\pmb\tau,z)]^{vir} \not= 0$ for some realizable tropical type $\tau$ marked by $\beta$. As was just shown, this can only be the case if $D_i^*(s)= 0$ for all bad components $D_i$ of $D$. As a result, we have $c_1(T_X^{log})\cap \textbf{A}_2 = 0$. This restriction allows us to remove the nef/antinef assumption appearing in Theorem $37$ of \cite{RQC_no_log}, and the argument for this theorem in loc. cit. now shows $D_i^*(s) \ge 0$ for all components $D_i$ of $D$. Indeed, recall from Equation \ref{vdimorb} that the virtual dimension of the moduli space $\mathscr{M}^{orb}(X,\beta)$ is 

\[vdim\text{ }\mathscr{M}^{orb}(X,\beta) = c_1(T_X^{log})\cdot \textbf{A}_2 - 3 + n + dim\text{ }X -\sum_i e_i = dim\text{ }X - \sum_i e_i.\]
Since the contact order $r$ satisfies $D_i^*(r) < 0$ for only one divisor by the slope sensitivity condition, and the contact order $p_3$ satisfies $D_i^*(p_3) \ge 0$ for all components $D_i \subset D$, letting $e_s = |\{D_i\text{ }\mid{ }D_i^*(s) < 0\}|$, we have:

\[vdim\text{ }\mathscr{M}^{orb}(X,\beta) = dim\text{ }X - 1 - e_s.\]
If $\langle T_{s},[1]_{p_1},[pt]_{-r}\rangle_{\textbf{A}_2} \not= 0$, the virtual dimension of $\mathscr{M}^{orb}(X,\beta)$ must be at least $dim\text{ }X -1$, implying $e_s = 0$ and $T_s = [1]_s$.
\end{proof}

The above lemma allows us to conclude all the additional terms in the WDVV equation appearing on both sides of equation \ref{WDVV2} vanish in the non-minimal setting as well, and the resulting equality gives the desired associativity of the log mirror algebra. 
\end{proof}
%Note that while we a priori have no guarantee that the associated orbifold invariants above vanish for any given target, the invariant does vanish after sufficiently many log modifications. 

\begin{remark}\label{rmk1}
While the examples given in Section $5$ show that the structure constants used to define $R_{(X,D)}$ and $R_{(X,D)}^{orb}$ are distinct, their difference is precisely because of the fact the latter algebra do not behave nicely under base-change, unlike the birational invariance property that $R_{(X,D)}$ established in \cite{pbirinv}. 

Additionally, it is noted in \cite{int_mirror} Remark $4.17$ that the no-tail lemma is crucial for associativity of the product, as it was in the above proof of associativity, and the failure of associativity in general should be encoded in a non-zero differential used in the definition of the analogue of symplectic cohomology in this setting.  
\end{remark}

\begin{example}

To better understand how the proof of associativity in this work relates with the proof found in \cite{int_mirror}, we consider the extended example $4.5$ of \cite{int_mirror}. This example features the blowup of $\mathbb{P}^2$ at an interior point on a coordinate axis, and $D$ is given by the strict transform of the toric boundary, we refer to the resulting log Calabi-Yau as $(X,D)$. The cone complex $\Sigma(X)$ can be identified with the cone complex associated with the fan of $\mathbb{P}^2$. Letting $D_1$ be the boundary component of the boundary which intersects the exceptional divisor, and $D_2$ and $D_3$ the remaining components, we consider the product of the theta functions $\vartheta_{p_1},\vartheta_{p_2},\vartheta_{p_3}$, with:

\[
p_1 = D_1+2D_2, \text{ } p_2 = D_2, \text{ } p_3= 2D_3, 
\]

We are particularly interested in the $\vartheta_{D_2}$ term of the resulting product of theta functions. This term is understood by intersection theory on the moduli space $\mathscr{M}(X,\beta,z)$, where $\beta$ fixes contact orders $p_1,p_2,p_3$ and $D_2^*$ and curve class $A = 2L - E$, L being the total transform of a line and $E$ the class of the exceptional curve.  This moduli space admits a stabilization map $\mathscr{M}(X,\beta,z) \rightarrow \overline{\mathscr{M}}_{0,4}$. The fiber over the boundary point of $\overline{\mathscr{M}}_{0,4}$ associated with the partition $(x_1,x_2|x_3,x_{out})$ is a non-reduced point of length $2$, while the fiber over the boundary point associated with the partition $(x_2,x_3|x_1,x_{out})$ is $\mathbb{P}^1$. The tropical types associated with the generic points of the fiber in the latter case is depicted in Figure \ref{m2}:

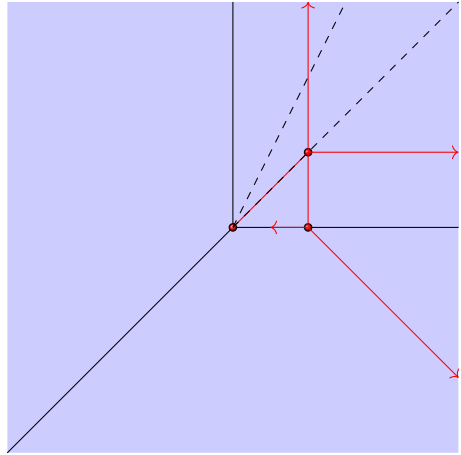
\begin{figure}[h]
\centering
\begin{tikzpicture}
\fill[white!80!blue, path fading = north ] (-6,0)--(-3,0)--(-3,3)--(-6,3)--cycle;
\fill[white!80!blue, path fading = south ] (-6,0)--(-3,0)--(-3,-3)--(-9,-3)--cycle;
\fill[white!80!blue, path fading = north] (-6,0)--(-9,-3)--(-9,3)--(-6,3)--cycle;
\draw[black] (-6,0)--(-6,3);
\draw[black] (-6,0)--(-3,0);
\draw[black] (-6,0)--(-9,-3);
\draw[ball color = red] (-6,0) circle (0.5mm);
\draw[ball color = red] (-5,1) circle (0.5mm);
\draw[ball color = red] (-5,0) circle (0.5mm);
\draw[-,color= red] (-6,0)--(-5,1);
\draw[->,color = red] (-5,1)--(-5,3);
\draw[->,color = red] (-5,1)--(-3,1);
\draw[-,color = red] (-5,1)--(-5,0);
\draw[->,color = red] (-5,0)--(-5.5,0);
\draw[->,color = red] (-5,0)--(-3,-2);
\draw[dashed] (-6,0)--(-3,3);
\draw[dashed] (-6,0)--(-4.5,3);
\end{tikzpicture}
\caption{The tropical curve associated with the generic point of $\mathscr{M}_2$}\label{m2}
\end{figure}

\begin{figure}[h]
\centering
\begin{tikzpicture}
\fill[white!80!blue, path fading = north ] (-6,0)--(-3,0)--(-3,3)--(-6,3)--cycle;
\fill[white!80!blue, path fading = south ] (-6,0)--(-3,0)--(-3,-3)--(-9,-3)--cycle;
\fill[white!80!blue, path fading = north] (-6,0)--(-9,-3)--(-9,3)--(-6,3)--cycle;
\draw[black] (-6,0)--(-6,3);
\draw[black] (-6,0)--(-3,0);
\draw[black] (-6,0)--(-9,-3);
\draw[ball color = red] (-6,0) circle (0.5mm);
\draw[ball color = red] (-5,1) circle (0.5mm);
\draw[ball color = red] (-4.5,2) circle (0.5mm);
\draw[ball color = red] (-5,0) circle (0.5mm);
\draw[-,color= red] (-6,0)--(-5,1);
\draw[->,color = red] (-4.5,2)--(-4.5,3);
\draw[->,color = red] (-4.5,2)--(-3,2);
\draw[-,color = red] (-5,1)--(-4.5,2);
\draw[-,color = red] (-5,1)--(-5,0);
\draw[->,color = red] (-5,0)--(-5.5,0);
\draw[->,color = red] (-5,0)--(-3,-2);
\draw[dashed] (-6,0)--(-3,3);
\draw[dashed](-6,0)--(-4.5,3);
\draw[dashed](-6,0)--(-3,-3);

\fill[white!80!blue, path fading = north ] (1,0)--(4,0)--(4,3)--(1,3)--cycle;
\fill[white!80!blue, path fading = south ] (1,0)--(4,0)--(4,-3)--(-2,-3)--cycle;
\fill[white!80!blue, path fading = north] (1,0)--(-2,-3)--(-2,3)--(1,3)--cycle;
\draw[black] (1,0)--(1,3);
\draw[black] (1,0)--(4,0);
\draw[black] (1,0)--(-2,-3);
\draw[ball color = red] (1,0) circle (0.5mm);
\draw[ball color = red] (1.9,0) circle (0.5mm);
\draw[ball color = red] (2,1) circle (0.5mm);
\draw[ball color = red] (2,2.5) circle (0.5mm);
\draw[-,color= red] (1,0)--(2,1);
\draw[-,color = red] (2,2.5)--(1.9,2.5);
\draw[-,color = red] (1.9,2.5)--(1.9,0);
\draw[-,color = red] (2,1)--(2,2);
\draw[->,color = red] (1.9,0)--(1.4,0);
\draw[->,color = red] (1.9,0)--(3.9,-2);
\draw[->,color = red] (2,2)--(2,3);
\draw[->,color = red] (2,1)--(4,1);
\draw[dashed] (1,0)--(4,3);
\draw[dashed](1,0)--(2.5,3);
\draw[dashed](1,0)--(4,-3);

\end{tikzpicture}
\caption{Tropical curves associated to lower dimensional strata of $\mathscr{M}_2$ which contribute to $\vartheta_{p_1}\vartheta_{p_2}\vartheta_{p_3}[\vartheta_{D_2}z^A]$.}\label{m2s}
\end{figure}

This discrepancy is fixed in \cite{int_mirror} by imposing a logarithmic condition on the image under the stabilization map which requires the length of the unique edge in the tropicalization to be large. This results in an integral on $\mathscr{M}_2$ which is localized to the two strata associated with the tropical data depicted in Figure \ref{m2s}.

These two strata are also identified from the perspective taken in this paper, as the natural cutting edge in Figure \ref{m2} does not have a non-negative contact order in the blowup required to ensure slope sensitivity, while the tropical types depicted in Figure \ref{m2s} do have cutting edges with non-negative contact orders. 
\end{example}

In a similar fashion, we also deduce the weak Frobenius structure theorem for all targets considered in this section. The basic structure of the argument is similar to that for the associativity of the mirror algebra, except instead of considering the WDVV relation, we instead consider the following topological recursion relation.

\begin{equation}\label{TRR}
N^\beta_{p_1,\ldots,p_m,0} = \sum_{\textbf{A}_1 + \textbf{A}_2 = \textbf{A} \in H_2(X),\text{ }s \in \ZZ^n,\text{ }k,S_1,S_2} \langle[pt]_0\psi^{m-3},\prod_{j \in S_1}[1]_{p_j},T_{-s,k}\rangle_{\textbf{A}_1}\langle T^k_{s},[1]_{p_1},[1]_{p_2},\prod_{j \in S_2} [1]_{p_j}\rangle_{\textbf{A}_2}.
\end{equation}

In the above equation, the pair $(S_1,S_2)$ vary over all pairs of disjoint subsets of $\{1,\ldots,m\}$ such that $S_1\cup S_2 = \{3,\ldots,m\}$. As before, we may assume that all of the corresponding moduli spaces of punctured log maps are all slope sensitive. We wish to show the following:

\begin{lemma}
$ \langle[pt]_0\psi^{m-3},\prod_{j \in S_1}[1]_{p_j},T_{-s,k}\rangle_{\textbf{A}_1} = 0$ unless $T_{-s,k} = [1]_{-s}$, $-s$ is a positive contact order, and $|S_1| = m-2$.
\end{lemma}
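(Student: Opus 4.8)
The plan is to mirror the structure of Lemma~\ref{asslem}, using the logarithmic incarnation of the invariant together with a tropical/dimension argument, now complicated by the presence of the descendent insertion $\psi^{m-3}$ at the contact order $0$ marked point and the extra positive legs indexed by $S_1$. First I would pass, as in the proof of Corollary~\ref{mcr1}, to the smooth projective log \'etale modification $\tilde X \to X$ on which all potentially contributing tropical types are slope sensitive and on which the relevant output contact order lies in a $1$-dimensional cone; by Theorem~\ref{logrelations} and Theorem~\ref{neglogorb} it suffices to prove the corresponding vanishing for the refined punctured log invariant $\int_{[\mathscr{M}(X,\tau,z)]^{vir}}\psi_{x_{out}}^{m-3}ev_{x_{out}}^*(pt)\prod_{j\in S_1}ev_{x_j}^*(1)$ for every realizable tropical type $\tau$ marked by the type $\beta=(\textbf{A}_1,(0,(p_j)_{j\in S_1},-s))$, after applying Corollary~\ref{logorb} to reduce from the large $r$ orbifold invariant to a sum of point-constrained log invariants.

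The heart of the argument is then a dimension estimate analogous to Lemma~\ref{spine}, run on the tropical type $\tau$. The contact order $0$ leg forces a vertex $v_0$ mapping to the interior of the cone complex, i.e.\ $\pmb\sigma(v_0)=0$, and the $-s$ leg forces a vertex mapping to $\sigma_s$. I would again consider the spine, the convex hull of the $x_{out}$ leg and the $-s$ leg, and prove by induction along the spine (exactly as in Lemma~\ref{spine}, using $c_1(T_X^{log})=\sum_i -a_iD_i$ with $a_i\ge 0$, \cite{int_mirror} Corollary~$1.14$ for the curve class of a cut piece, \cite{punc} Proposition~$3.28$ for the dimension of $\mathfrak{M}^{ev}(\mathcal X,\tau_1)$, and \cite{int_mirror} Theorem~A.13 for the vanishing from negative virtual dimension) that if $[\mathscr{M}(X,\tau,z)]^{vir}\ne 0$ then every edge of the spine maps into $B$, so that $\pmb\sigma(l_{out})=\sigma_s\subset B$, hence $D_i^*(s)=0$ for every bad $D_i$ and $c_1(T_X^{log})\cdot\textbf{A}_1=0$. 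The new features relative to Lemma~\ref{asslem} are: (i) the descendent $\psi^{m-3}$ at $x_{out}$, which I handle by noting $\psi_{x_{out}}$ is pulled back along stabilization and its powers only lower the dimension of the support of the class, so the same negative-virtual-dimension vanishing applies verbatim to the cut piece $\tau_1$; and (ii) the extra positive legs in $S_1$, which only make the curve class of any cut piece $\tau_1$ containing them more positive against the bad divisors — pushing $A(\tau_1)\cdot c_1(T_X^{log})$ further negative — so they strengthen rather than obstruct the induction.

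Once $c_1(T_X^{log})\cdot\textbf{A}_1=0$ and $D_i^*(s)=0$ for bad $D_i$, I would finish exactly as in Lemma~\ref{asslem}: this vanishing removes the nef/antinef hypothesis from \cite{RQC_no_log} Theorem~$37$, so the dimension count from Equation~\eqref{vdimorb} applies to $\mathscr{M}^{orb}(X,\beta)$. Here $vdim\text{ }\mathscr{M}^{orb}(X,\beta)=c_1(T_X^{log})\cdot\textbf{A}_1 + (|S_1|+2) - 3 + \dim X - \sum_i e_i$; the descendent $\psi^{m-3}$ at $x_{out}$ cuts the dimension by $m-3$, and for a nonzero invariant one needs this residual dimension to be at least $\dim X$ (to support the point class $[pt]_0$). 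Plugging in $m-3 = |S_1|$ (when $S_2$ is empty the splitting of marked points forces $|S_1| = m-2$, but one has to track the combinatorics of $S_1\cup S_2=\{3,\dots,m\}$ together with the two fixed positive legs $p_1,p_2$ landing in the second factor) and $e_s = |\{i: D_i^*(s)<0\}|$, one gets $vdim - (m-3) = \dim X - 1 + |S_1| - (m-3) - e_s$, forcing $|S_1| = m-2$, $e_s = 0$, and $T_{-s,k}=[1]_{-s}$ with $-s$ a positive contact order. The main obstacle I anticipate is purely bookkeeping: carefully matching the insertions $[pt]_0\psi^{m-3}$, the partition $S_1\sqcup S_2 = \{3,\dots,m\}$, and the two legs $p_1,p_2$ forced into the second correlator against the virtual dimension formula, and confirming that the spine induction of Lemma~\ref{spine} goes through unchanged in the presence of the descendent class (which it does, since $\psi_{x_{out}}^{m-3}$ is supported on all of $\mathscr{M}(X,\tau_1,z)$ and the negative-virtual-dimension argument of \cite{int_mirror} Theorem~A.13 is insensitive to capping with a fixed cohomology class).
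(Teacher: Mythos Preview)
Your overall architecture matches the paper's: reduce via Corollary~\ref{logorb} to point-constrained log invariants, run the spine induction of Lemma~\ref{spine} to force $D_i^*(-s)=0$ for bad $D_i$, then finish with the orbifold virtual dimension count. However, your treatment of the two ``new features'' contains a genuine gap. In (ii) you assert the extra legs in $S_1$ only push $A(\tau_1)\cdot c_1(T_X^{log})$ further negative and hence strengthen the induction. In fact, since $p_j\in B(\ZZ)$ we have $D_i^*(p_j)=0$ for every bad $D_i$, so these legs contribute nothing to $A(\tau_1)\cdot c_1(T_X^{log})$; but they \emph{do} increase $\dim\mathfrak{M}^{ev}(\mathcal{X},\tau_1)$, by Proposition~3.28 of \cite{punc}, by roughly $|S_1|$. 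Concretely the paper obtains $\dim\mathfrak{M}^{ev(x_{out},x)}(\mathcal{X},\tau_1,z)\le \dim X + m-3$ rather than $\le \dim X$ as in Lemma~\ref{spine}. So the spine argument does \emph{not} go through verbatim: without compensating for this increase, the cut piece $\tau_1$ need not have negative virtual dimension and Theorem~A.13 of \cite{int_mirror} does not apply.

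The paper's fix, which you should incorporate, is to absorb the descendent into the obstruction theory: replace the relative obstruction theory for $\mathscr{M}(X,\tau_1,z)\to\mathfrak{M}^{ev(x_{out},x)}(\mathcal{X},\tau_1,z)$ by the dual of $R\pi_*f^*T_X^{log}(-x_{out}-x)\oplus L_{x_{out}}^{m-3}[1]$, so that the new virtual class is exactly $[\mathscr{M}(X,\tau_1,z)]^{vir}\cap\psi_{x_{out}}^{m-3}$. This subtracts $m-3$ from the relative virtual dimension, precisely cancelling the $+(m-3)$ coming from the extra legs, and the compatibility under cutting is immediate since $L_{x_{out}}$ on $\mathscr{M}(X,\tau_1,z)$ pulls back to $L_{x_{out}}$ on $\mathscr{M}(X,\tau,z)$. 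Your remark that $\psi_{x_{out}}$ ``lowers the dimension of the support'' is the right intuition, but Theorem~A.13 is a statement about virtual classes coming from obstruction theories, so to invoke it you need this repackaging rather than a separate cap. Once this is in place the rest of your outline (and your final orbifold dimension count forcing $|S_1|=m-2$, $e_s=0$, $T_{-s,k}=[1]_{-s}$) is correct.
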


\begin{proof}
By Corollary \ref{logorb}, we may show the given orbifold invariant involving a point constraint vanishes by considering an integral on the moduli space of point constrained log stable maps $\mathscr{M}(X,\beta,z)$, with $z \in X\setminus D$ generic, and $\beta$ a decorated tropical type specifying contact order $0,-s$, and $p_j$ for $j \in S_1$. As before, we consider the virtual pullback along the map $\mathscr{M}(X,\beta,z) \rightarrow \mathfrak{M}^{ev}(\mathcal{X},\beta,z)$ of strata of $\mathfrak{M}^{ev}(\mathcal{X},\beta,z)$, and $\eta \in \mathfrak{M}^{ev}(\mathcal{X},\beta,z)$ the generic point of the statum. Letting $\tau$ be the tropical type of the log map $C_\eta \rightarrow \mathcal{X}$ associated with $\eta$, note that for the vertex $v_{out} \in V(G_\tau)$ containing the leg with contact order $0$, the point constraint ensures that $\tau$ is contributing only if $\pmb\sigma(v_{out})$ is the zero cone in $\Sigma(X)$. We additionally define an modification of the obstruction theory associated with the morphism $\mathscr{M}(X,\beta,z) \rightarrow \mathfrak{M}^{ev}(\mathcal{X},\beta,z) $ to be the dual of
\[ R\pi_*f^*T_X^{log}(-x_{out})\oplus L_{x_{out}}^{m-3}[1],\]
 with $L_{x_{out}}$ the cotangent line to $x_{out}$. The new virtual fundamental class $[\mathscr{M}(X,\beta,z)]^{vir'}$ is related to the original virtual fundamental class by 
 \[[\mathscr{M}(X,\beta,z)]^{vir'} = [\mathscr{M}(X,\beta,z)]^{vir}\cap \psi_{x_{out}}^{m-3}.\]
 In particular, the invariant of interest is the degree of the Chow class $[\mathscr{M}(X,\beta,z)]^{vir'}$. 

%
%
% Additionally, we have as before that $val(v_{out}) \ge 3$ by the vanishing tail lemma. Thus, the component containing $v_{out}$ is not contracted under the stabilization map associated with $\mathscr{M}(X,\pmb\tau,z) \rightarrow \mathscr{M}_{0,m}$. Since the class $\psi_{x_{out}}^{m-3}$ on $\mathscr{M}_{0,m}$ is Poincar\'e dual to a point, a tropical type $\tau$ considered above can only contribute non-trivially to the invariant of interest if $v_{out}$ is $m$ valent. We pick a generic point $Spec\text{ }\kk\rightarrow \overline{\mathscr{M}}_{0,m}$, and consider the moduli stack $\mathfrak{M}^{ev}(\mathcal{X},\tau,z,C) := \mathfrak{M}^{ev}(\mathcal{X},\tau,z) \times_{\overline{\mathscr{M}}_{0,m}} Spec\text{ }\kk$ of log maps marked by $\tau$ with constrained domain modulus, and observe that similar to the proof of the first part of Lemma \ref{altstconst} that the class $[\mathscr{M}(X,\tau,z)]^{vir}\cap \psi_{x_{out}}^{m-3} \in A_*(\mathscr{M}(X,\beta,z))$ is given by the virtual pullback of $[\mathfrak{M}^{ev}(\mathcal{X},\tau,z,C)]$. 

We now prove the following lemma, which is a slight modification of Lemma \ref{spine} above:

\begin{lemma}
Let $\tau$ be a tropical type with legs $L,l_{out},\ldots,l_m$, $m > 1$, having contact orders $D_i^*(u_{l_i}) = 0$ for all bad divisors $D_i$, and a vertex $v \in V(G_\tau)$ with $\pmb\sigma(v) \in \mathscr{P}(X)$, contained in the legs $l_{out},\ldots,l_m$. If $[\mathscr{M}(X,\pmb\tau,z,C)]^{vir} \not= 0$, then for all edges and legs $e \in E(G_\tau) \cup L(G_\tau)$ contained in the convex hull of $v$ and $L$, we have $D_i^*(u_e) = 0$ for any bad divisor $D_i$. 
\end{lemma}

\begin{proof}
As in Lemma \ref{spine}, we prove the proposition by induction on the length of a chain of edges in the spine, with the labeling of edges and the base case as in Lemma \ref{spine}. Assuming the inductive hypothesis for the chain of length $k-1$, we cut $\tau$ along an edge $e_k \in E(G_\tau)$ in the spine, producing two tropical type $\tau_1$ and $\tau_2$ with $\tau_1$ containing the vertex $v$. By the same argument at the start of Lemma \ref{spine}, we have $A(\tau_1)\cdot c_1(T_X^{log}) \le 0$. Moreover, we observe by Corollary $7.3$ of \cite{punc} that we have the following cartesian diagram:

\[\begin{tikzcd}
\mathscr{M}(X,\tau,z) \arrow{r}\arrow{d}& \mathscr{M}(X,\tau_1,z)\times\mathscr{M}(X,\tau_2) \arrow{d}\\
\mathfrak{M}^{ev(x_{out})}(\mathcal{X},\tau,z) \arrow{r} & \mathfrak{M}^{ev(x_{out},x)}(\mathcal{X},\tau_1,z)\times \mathfrak{M}^{ev(x)}(\mathcal{X},\tau_2).
\end{tikzcd}\]

Unlike in Lemma \ref{spine}, the obstruction theories on the left vertical arrow is the one defined preceeding this lemma, and the obstruction theory for the right vertical arrow is the product obstruction theory, with the usual one for the moduli stacks associated with $\tau_2$, and the dual of $R\pi_*f^*T_X^{log}(-x_{out}-x) \oplus L_{x_{out}}^{m-3}[1]$ for $\tau_1$. Since the cotangent line for $x_{out}$ on $\mathscr{M}(X,\tau_1,z)$ clearly pulls back to the cotangent line for $x_{out}$ on $\mathscr{M}(X,\tau,z)$, the obstruction theories are compatible. Furthermore, we have the relative virtual dimension of $\mathscr{M}(X,\pmb\tau_1,z) \rightarrow \mathfrak{M}^{ev(x_{out},x)}(\mathcal{X},\tau_1,z)$ is less than or equal to $-dim\text{ }X-m+3$, with equality only when the the conclusion of the lemma holds. Hence, we must show as before that $\mathfrak{M}^{ev(x_{out},x)}(\mathcal{X},\tau_1,z) \le dim\text{ }X +m -3$. This follows from the analogous argument given in Lemma \ref{spine}, with the only difference being Proposition $3.28$ of \cite{punc} gives $dim\text{ }\mathfrak{M}^{ev}(\mathcal{X},\tau_1)\le dim\text{ }X + m - 3$. We conclude that $vdim\text{ }\mathscr{M}(X,\tau_1,z)< 0$ when there is a bad divisor $D_i$ such that $D_i^*(u_{e_k}) \not= 0$, and hence by \cite{int_mirror} Theorem $A.13$, $[\mathscr{M}(X,\pmb\tau,z)]^{vir} = 0$. 
\end{proof}

By the above lemma, we must have $D_i^*(-s) = 0$ for all bad divisors $D_i$. In particular, since $c_1(T_X^{log}) = \sum_i -a_iD_i$ with $a_i\ge 0$ and $a_i > 0$ for only $i$ such that $D_i$ is a bad divisor, by Corollary $1.14$ of \cite{int_mirror}, we have  $c_1(T_X^{log})\cdot \textbf{A}_1 = 0$. Now recall the virtual dimension of $\mathscr{M}^{orb}(X,\beta)$ is:
\[vdim\text{ }\mathscr{M}^{orb}(X,\beta) = c_1(T_X^{log})\cdot \textbf{A}_1 - 3 + n + dim\text{ }X - \sum_i e_i = -3 + |S_1| + 2 +dim\text{ }X - \sum_{i=1}^k e_i.\]
For the invariant $\langle[pt]_0\psi^{m-3},\prod_{j \in S_1}[1]_{p_j},T_{-s,k}\rangle_{\textbf{A}_1}$ to be non-zero, we must have 
\[vdim\text{ }\mathscr{M}(X,\beta) \ge m-3+dim\text{ }X.\]
Since $|S_1| \le m-2$, we must have $e_i = 0$ for all $i$ and $|S_1| = m-2$. With these constraints, the only possible insertion we can have is $T_{-s,k} = [1]_{-s}$ as required. 
\end{proof}

Using the above lemma, Equation \ref{TRR} reduces to:

\[N^\textbf{A}_{p_1,\ldots,p_m,0} = \sum_{(\textbf{A}_1,\textbf{A}_2,s)} N^{\textbf{A}_1}_{p_3,\ldots,p_m,s,0}N_{p_1,p_2,s}^{\textbf{A}_2}\]
A straightforward induction on size of $m$ now shows:
\[N^\textbf{A}_{p_1,\ldots,p_m,0} = \sum_{(\textbf{A}_1,\ldots,\textbf{A}_{m-1},s_1,\ldots,s_{m-2})} N_{p_1,p_2,s_1}^{\textbf{A}_1}(\prod_{i = 2}^{m-2} N^{\textbf{A}_i}_{s_{i-1},p_{i+1},s_i})N_{s_{m-2},p_k,0}^{\textbf{A}_{m-1}}.\]
The right-hand side of the above relation is the expression for the $z^\textbf{A}\vartheta_0$ term of $\vartheta_{p_1}\cdots\vartheta_{p_m}$ found by repeatedly applying the product rule for theta functions, giving the desired equality of Theorem \ref{mthm2}.

\begin{remark}
The utilities of both relative theories can be observed in this paper. On the orbifold side, there are straight out of the box relations which sidestep many otherwise delicate gluing arguments on the log GW side of the correspondence. On the other hand, the orbifold theory is not log \'etale invariant unlike the log theory, leading to the counterexample to log \'etale invariance presented in this paper. On the logarithmic side, the relationship between the virtual geometry of the moduli spaces of log stable maps and the corresponding tropical moduli problems allow us to refine the invariants of interest by specifying tropical behavior, which is not possible using only orbifold Gromov-Witten invariants. As a result, we believe the correct takeaway is that both theories yield interesting and distinct insights into the Gromov-Witten theory of pairs $(X,D)$, and there is much potential in combining techniques from these two areas.

\end{remark}

%\section{appendix}
%
%\begin{lemma}
%Let $f: X \rightarrow Y$ be a morphism of fs log stacks, with $X$ and $Y$ carrying coherent idealized structures, and $f$ is integral and idealized log smooth morphism of log dimension $n$. Then $dim\text{ } X \le dim\text{ }Y + n$. 
%\end{lemma}
%
%\begin{proof}
%By \cite{punc} Proposition $B.4$, we may assume there are \'etale neighborhoods $X'$ and $Y'$ of $X$ and $Y$ with a morphism $X' \rightarrow Y'$ factoring as $X' \rightarrow Y' \times_{\mathcal{A}_{Q,J}} \mathcal{A}_{P,K} \rightarrow Y'$ with the first morphism smooth. By the same proposition, there exists a commuat
%\end{proof}

%Using the lemma above, we may simplify equation \ref{TRR} to:
%
%\begin{equation}
%\begin{split}
%N^\beta_{0,p_1,\ldots,p_m} = \sum_{\beta_1 + \beta_2 = \beta \in H_2(X),\text{ }s \in \ZZ^n,\text{ }k} \langle[pt]_0\psi^{m-3},[1]_{p_3},\ldots,[1]_{p_m},[1]_{s}\rangle \langle [pt]_{-s},[1]_{p_1},[1]_{p_2}\rangle\\
% = \sum N_{0,p_3,\ldots,p_m,s}^{\beta_1}N_{p_1,p_2,-s}^{\beta_2}.
%\end{split}
%\end{equation}
%
%By induction on $m$, we conclude the weak Frobenius structure theorem. 

\nocite{*}
\bibliographystyle{amsalpha}
\bibliography{assoc}

\end{document}